\documentclass[11pt,a4paper,reqno]{amsart}
\usepackage[a4paper,lmargin=2.5cm,rmargin=2cm,tmargin=4cm,bmargin=4cm]{geometry}

\usepackage[english]{babel}

\usepackage[centertags]{amsmath}
\usepackage{amsfonts}
\usepackage{amssymb}
\usepackage{amsthm}
\usepackage{dsfont}
\usepackage{mathtools}

\usepackage[pdftex]{color}
\usepackage[bookmarks=true,hyperindex,pdftex,colorlinks, citecolor=MidnightBlue,linkcolor=MidnightBlue,
urlcolor=MidnightBlue
]{hyperref}
\usepackage[dvipsnames]{xcolor}

\usepackage{enumerate}
\usepackage{enumitem}

\theoremstyle{plain}
\newtheorem{theorem}{Theorem}[section]
\newtheorem{proposition}[theorem]{Proposition}
\newtheorem{lemma}[theorem]{Lemma}
\newtheorem{corollary}[theorem]{Corollary}

\theoremstyle{definition}
\newtheorem{definition}[theorem]{Definition}

\newtheorem{example}[theorem]{Example}

\theoremstyle{remark}
\newtheorem{remark}[theorem]{Remark}

\newcommand{\K}{\mathbb{K}}
\newcommand{\R}{\mathbb{R}}
\newcommand{\N}{\mathbb{N}}
\newcommand{\C}{\mathbb{C}}

\DeclareMathOperator{\co}{conv}
\DeclareMathOperator{\conv}{conv}
\DeclareMathOperator{\aconv}{aconv}

\DeclareMathOperator{\SB}{SB}

\newcommand{\cconv}{\overline{\conv}}
\newcommand{\acconv}{\overline{\aconv}}

\renewcommand{\geq}{\geqslant}
\renewcommand{\leq}{\leqslant}

\newcommand{\norm}[1]{\left\Vert#1\right\Vert}

\newcommand{\pten}{\ensuremath{\widehat{\otimes}_\pi}}

\newcommand{\ext}{\operatorname{ext}}

\begin{document}

\title[Weak diametral diameter two property]{Banach spaces with the \\ weak diametral diameter two property}

\author[Guerrero-Viu]{Juan Guerrero-Viu}
\address[Guerrero-Viu]{Departamento de Matemáticas, Universidad de Zaragoza, 50009, Zaragoza, Spain} 
\email{j.guerrero@unizar.es}
\urladdr{\href{https://orcid.org/0009-0001-2125-5120}{ORCID: \texttt{0009-0001-2125-5120}}}

\author[Mart\'in]{Miguel Mart\'in}
\address[Mart\'in]{Department of Mathematical Analysis and Institute of Mathematics (IMAG), University of Granada, E-18071 Granada, Spain}
\email{mmartins@ugr.es}
\urladdr{\url{https://www.ugr.es/local/mmartins/}}
\urladdr{
\href{http://orcid.org/0000-0003-4502-798X}{ORCID: \texttt{0000-0003-4502-798X} } }

\author[Rueda Zoca]{Abraham Rueda Zoca}
\address[Rueda Zoca]{Department of Mathematical Analysis and Institute of Mathematics (IMAG), University of Granada, E-18071 Granada, Spain}
\email{abrahamrueda@ugr.es}
\urladdr{\url{https://arzenglish.wordpress.com}}
\urladdr{
\href{https://orcid.org/0000-0003-0718-1353}{ORCID: \texttt{0000-0003-0718-1353} } }

\subjclass[2020]{Primary 46B04; Secondary 46B20, 46E15, 46J10, 46M05, 46G25}

\keywords{Weak diametral diameter two property; super $\Delta$-points; vector-valued function spaces; K\"othe--Bochner spaces, proyective tensor product}

\begin{abstract}
We introduce and systematically study the weak diametral diameter two property (weak-DD2P), a new geometric property that lies strictly between the diametral diameter two property and both the diameter two property and the convex diametral local diameter two property. A necessary condition for the weak-DD2P is obtained through the structure of the extreme points of the dual unit ball, which leads to characterisations for several classical classes of spaces, including $C(K)$ spaces, $L_1$-preduals, unital uniform algebras, and (vector-valued) function algebras. We establish stability results under standard constructions such as absolute sums, Köthe--Bochner spaces, and projective (symmetric) tensor products. Moreover, we provide complete descriptions of the weak-DD2P for vector-valued spaces of the form $L_1(\mu,X)$, $L_\infty(\mu,X)$, and $C(K,X)$. These results yield a wide range of new examples and show that the weak-DD2P exhibits a behaviour genuinely different from that of other diameter two properties.
\end{abstract}
\maketitle

\begin{center}
\begin{minipage}{.8\textwidth}
  \centering
\parskip=0ex
    \tableofcontents
\end{minipage}
\end{center}

\section{Introduction}

The study of the geometry of slices, relatively weakly open sets, and convex combinations of slices has played a central role in several developments in Functional Analysis. On one end of the spectrum, the existence of slices (respectively, non-empty relatively weakly open subsets, convex combinations of slices) of arbitrarily small diameter or arbitrarily small radius in every closed, convex and bounded subset of a Banach space yields characterisations of fundamental geometric properties, namely the Radon--Nikodým property (RNP), the convex point of continuity property (CPCP), and strong regularity (SR), respectively.

At the opposite extreme, the study of \emph{large} slices, relatively weakly open subsets, and convex combinations of slices has been carried out through the so-called diameter two properties. Recall that a Banach space $X$ is said to have the \emph{strong diameter two property} (SD2P), respectively the \emph{diameter two property} (D2P) or the \emph{local diameter two property} (LD2P), if every convex combination of slices, respectively every non-empty relatively weakly open subset, or every slice of the unit ball of $X$ has diameter two. We refer the reader to \cite{ahntt,almt21,aln13,blr15eje1,blr15eje2} and the references therein for background on these properties. Stronger notions, incorporating explicit \emph{diametrality} conditions, were later introduced in \cite{blr18} as follows.

\begin{definition}\label{defiDD2P}
Let $X$ be a Banach space. We say that $X$ has the:
\begin{enumerate}
    \item the \textit{diametral local diameter two property} (\textit{DLD2P}) if given any slice $S$ of
$B_X$, $x\in S\cap S_X$ and $\varepsilon\in\mathbb R^+$, there
exists $y\in S$ such that
$\Vert x-y\Vert>2-\varepsilon.$
    \item the \textit{diametral diameter two property}
(\textit{DD2P}) if given any non-empty relatively weakly open subset $W$ of
$B_X$, $x\in W\cap S_X$ and $\varepsilon\in\mathbb R^+$, there
exists $y\in W$ such that
$\Vert x-y\Vert>2-\varepsilon$.
\end{enumerate}
\end{definition}

Observe that a diametral version for the strong diameter two property was also defined in \cite{blr18}, but V.\ Kadets proved that this property is equivalent to the classical Daugavet property \cite[Theorem 2.3]{kadets21}.

It is clear that each of the above properties implies its corresponding non-diametral version. Moreover, while it is known that the LD2P does not imply the D2P \cite{blr15eje1}, the question of whether the DLD2P implies the DD2P (or even the D2P) remains open \cite[Question~4.1]{blr18}.

More recently, localised versions of the above properties have been studied in depth, together with the question of how these local notions influence the topology and geometry of a given Banach space. To this end, we introduce some standard notation.

\begin{definition}
Let $X$ be a Banach space and let $x\in S_X$. We say that $x$ is
\begin{enumerate}
\item[\cite{ahlp}] a \emph{$\Delta$-point} if, for every slice $S$ of $B_X$ with $x\in S$ and every $\varepsilon>0$, there exists $y\in S$ such that $\|x-y\|>2-\varepsilon$;
\item[\cite{MPR}] a \emph{super $\Delta$-point} if, for every non-empty relatively weakly open subset $W$ of $B_X$ with $x\in W$ and every $\varepsilon>0$, there exists $y\in W$ such that $\|x-y\|>2-\varepsilon$;
\item[\cite{MPR}] a \emph{ccs $\Delta$-point} if, for every convex combination of slices $C$ of $B_X$ with $x\in C$ and every $\varepsilon>0$, there exists $y\in C$ such that $\|x-y\|>2-\varepsilon$.
\end{enumerate}
\end{definition}

The notions introduced above can be viewed as localisations of the diametral diameter two properties. Indeed, a Banach space $X$ has the DLD2P (respectively, the DD2P) if and only if every point of $S_X$ is a $\Delta$-point (respectively, a super $\Delta$-point).

A substantial body of research has emerged on $\Delta$-points and super $\Delta$-points since their introduction, covering a wide range of directions: their study in specific classes of Banach spaces \cite{lp,lrt,VeeorgStudia}, the analysis of the geometric consequences of their presence in a Banach space \cite{aalmmppv,almp,MPR}, and the investigation of the minimal number of $\Delta$-points or super $\Delta$-points required to ensure diameter two properties \cite{ahlp,almt21,taller}. 

Besides, the question of how many $\Delta$-points a Banach space needs in order to have the LD2P has attracted considerable attention. Observe that the mere presence of $\Delta$-points is not sufficient in order to guarantee that a Banach space $X$ has the LD2P as shows the fact that there exists an equivalent renorming of $\ell_1$ with a $\Delta$-point \cite[Theorem 4.1]{aalmmppv}, while the space satisfies the RNP. However, if we require that the set of $\Delta$-points generates the unit ball by taking closed convex hull, then we clearly have the LD2P \cite{ahlp}, as this property is equivalent to require that every slice contains a $\Delta$-point. This motivates the following definition from \cite{ahlp}.

\begin{definition}
Let $X$ be a Banach space. We say that $X$ has the \textit{convex-DLD2P} if for every slice $S$ of $B_X$ there exists a $\Delta$-point $x\in S$, equivalently, if $B_X$ is the closed convex hull of the set of $\Delta$-points.
\end{definition}

It is obvious that the DLD2P implies the convex-DLD2P, which in turn clearly implies the LD2P, and it is known that no implication reverses \cite{ahlp}. We refer the reader to \cite{ahlp,MPR,MartinRueda-symm} for background and results on the convex-DLD2P.

The present work is concerned with the following question: given a Banach space $X$, how large does the set of super $\Delta$-points need to be for $X$ to have the D2P?

Observe that requiring the set of super $\Delta$-points to generate the unit ball by closed convex hull is not enough. Indeed, in \cite{taller} it is proved that there exists a Banach space $X$ isomorphic to $L_\infty([0,1])$ where the set of super $\Delta$-points of $S_X$ generates the unit ball by closed convex hull and, at the same time, there are non-empty relatively weakly open subsets of arbitrarily small diameter \cite[Theorem 1.3]{taller}. This points out that the next step could be the possibility that the set of super $\Delta$-points be weakly dense in the unit ball. This motivates us to consider the following definition.

\begin{definition}
Let $X$ be a Banach space. We say that $X$ has the \textit{weak-DD2P} if given any non-empty weak open subset $W$ of $B_X$ there exists $x\in W$ such that 
$$\sup\nolimits_{y\in V}\Vert x-y\Vert=2$$ for every weak open set $V\subseteq B_X$ with $x\in V$.
\end{definition}

It is immediate that a Banach space $X$ has the weak-DD2P if and only if the set of super $\Delta$-points is weakly dense in $B_X$. As a consequence, the weak-DD2P implies the convex-DLD2P. Furthermore, the DD2P implies the weak-DD2P, and the latter implies the D2P. We show in Example~\ref{example:distintas} that none of these implications can be reversed.

The present work contributes to the ongoing programme aimed at understanding the fine geometric structure of Banach spaces through the behaviour of slices and relatively weakly open subsets of the unit ball. While diameter two phenomena have been extensively studied in recent years, both in their global and local forms, several natural questions remain open concerning the precise hierarchy and mutual independence of the corresponding properties. In this context, the weak diametral diameter two property captures, as we will see, a genuinely intermediate geometric behaviour. Our results reveal that this property cannot be reduced to any of the previously known diameter two conditions and, at the same time, exhibits a remarkable stability under a wide range of classical constructions. This interplay between geometric rigidity and flexibility leads, as shown throughout the paper, to unexpected examples and sharp characterisations in fundamental classes of Banach spaces, highlighting new structural features that were not detected by earlier approaches.

The paper is organised as follows. Section~\ref{section:def} is devoted to the study of the weak-DD2P in several families of Banach spaces. In Proposition~\ref{prop:denseC(K)} we characterise the spaces $C(K)$ with the weak-DD2P as those for which $K$ has infinitely many cluster points. This result suffices to distinguish between the DD2P, the weak-DD2P and the convex-LDD2P, and it leads to a necessary condition for the weak-DD2P formulated in terms of the extreme points of the dual unit ball (Proposition~\ref{prop:condineceweakdd2p}). Next, we establish sufficient conditions for the weak-DD2P for $L_1$-preduals and for unital uniform algebras (Corollaries~\ref{cor:charweakdd2Pl1predual} and~\ref{cor:charweakdd2p-unitaluniformalgebras}), as particular cases of a general result for spaces with a norming $\ell_1$-structure (Theorem~\ref{thm:sufficient-for-ell1-norming}).
Finally, the last subsection is devoted to the slightly more general framework of vector-valued function algebras. In Proposition~\ref{prop:superDeltasubalgebraX} we exhibit a family of super $\Delta$-points in such spaces, and in Theorem~\ref{thm:DenseSubalgebrasX} we show that a vector-valued function algebra has the weak-DD2P provided that $\SB(A)'\cap \SB(A)$ is infinite, where $\SB(A)$ denotes the set of strong boundary points of the base algebra $A$.

Next, Section~\ref{section:stability} is devoted to the study of several stability properties of the weak-DD2P under standard operations on Banach spaces. In Subsection~\ref{subsection:absolutesums} we provide a complete description of the behaviour of the weak-DD2P with respect to absolute sums of Banach spaces, showing that it parallels that of the D2P. More precisely, for the $\ell_\infty$-sum of two spaces, $X\oplus_\infty Y$ has the weak-DD2P if and only if one of the spaces $X$ or $Y$ has the weak-DD2P (Propositions~\ref{prop:densesuminfty} and~\ref{prop:sumasabsolutasbaja}). For the remaining absolute sums, $X\oplus_N Y$ has the weak-DD2P if and only if both $X$ and $Y$ have the weak-DD2P (Propositions~\ref{prop:sumabsosuben} and~\ref{prop:sumasabsolutasbaja}). In Subsection~\ref{subsection:kothe} we show that the weak-DD2P passes from a Banach space $X$ to the vector-valued Köthe function space $E(X)$, provided that the underlying measure space is complete and $\sigma$-finite. Finally, Subsection~\ref{subsection:tensores} contains an in-depth analysis of stability results for the weak-DD2P under projective tensor products and their symmetric counterparts. In Theorem~\ref{theo:maintheotensornosim} we prove that $A(\Omega,X)\pten Y$ has the weak-DD2P whenever the function algebra $A(\Omega,X)$ satisfies the hypotheses of Proposition~\ref{prop:superDeltasubalgebraX} and the space $Y$ satisfies that $Y^*$ contains a spear vector or that every convex combination of slices of $B_Y$ intersects the unit sphere. Moreover, for the projective symmetric tensor product we show in Theorem~\ref{theo:proyectivosimetrico} that $\widehat{\otimes}_{\pi,s,N} C(K)$ has the weak-DD2P whenever $C(K)$ has the weak-DD2P.

Finally, in Section~\ref{section:espaclavectorval} we exploit the results obtained in Subsection~\ref{subsection:absolutesums} to provide a complete description of the weak-DD2P for the vector-valued spaces $L_1(\mu,X)$, $L_\infty(\mu,X)$ and $C(K,X)$. 

In the case of $L_1(\mu,X)$, we show that this space has the weak-DD2P if and only if the measure $\mu$ is atomless or the space $X$ has the weak-DD2P. For $L_\infty(\mu,X)$, we prove that the weak-DD2P is equivalent to any of the following conditions: $\mu$ is not purely atomic, $\mu$ has infinitely many atoms, or $X$ has the weak-DD2P. 

For spaces of the form $C(K,X)$ we obtain the following characterisation:
\begin{itemize}[nolistsep]
    \item[-] if $K'=\emptyset$, then $C(K,X)$ has the weak-DD2P if and only if $X$ has the weak-DD2P;
    \item[-] if $K'$ is a non-empty finite set, then $C(K,X)$ has the weak-DD2P if and only if $X$ is infinite-dimensional;
    \item[-] if $K'$ is infinite, then $C(K,X)$ has the weak-DD2P regardless of the space $X$.
\end{itemize}
As a noteworthy consequence of this description, we show that a space of the form $C(K,X)$ may have the weak-DD2P even when both $C(K)$ and $X$ fail to satisfy the property, as illustrated by the example $c(\ell_2)$ (see Remark~\ref{rem:C(K,X)vecval}).

\textbf{Terminology.}
Let $X$ be a Banach space over the field $\K$, where $\K=\R$ or $\K=\C$. We denote by $B_X$ and $S_X$ the closed unit ball and the unit sphere of $X$, respectively. The symbol $X^*$ stands for the (topological) dual space of $X$, and $J_X\colon X\to X^{**}$ denotes the canonical embedding of $X$ into its bidual. Given a subset $C\subset X$, we write $\co(C)$ for the convex hull of $C$ and $\aconv(C)$ for its absolutely convex hull.

\section{The first examples}\label{section:def}
We study the weak-DD2P in several families of Banach spaces: $C(K)$ spaces, spaces with norming $\ell_1$-structure (which include preduals of $L_1$ and unital uniform algebras), and (vector valued) function algebras.

\subsection{Spaces of continous functions} 
As noted in the introduction, the weak-DD2P is distinct from the DD2P, the convex-LD2P, and the D2P. In order to make this distinction explicit, we present a characterisation of the weak-DD2P for $C(K)$ spaces. While more general results will follow in the next subsections, this characterisation is sufficient for our purposes and provides motivation for the results that come next.

Given a topological space $V$, we denote by $V'$ the set of cluster points of $V$. For a subset $A\subseteq V$, we write $A'$ for the set of cluster points of $A$ in $V$. 

\begin{proposition}\label{prop:denseC(K)}
    Let $K$ be a compact Hausdorff topological space. Then $C(K)$ has the weak-DD2P if and only if $K'$ is an infinite set (equivalently, if $K''\neq \emptyset$).
\end{proposition}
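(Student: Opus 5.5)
We need the characterisation of super $\Delta$-points in $C(K)$. The known result (from \cite{MPR} or related work) is that $f\in S_{C(K)}$ is a super $\Delta$-point iff $f$ attains its norm at a cluster point of $K$, i.e. there is $t\in K'$ with $|f(t)|=1$. I will use this characterisation, giving a quick argument for it if needed. If $t\in K'$ and $|f(t)|=1$, let $W$ be a weak neighbourhood of $f$ in $B_{C(K)}$ determined by measures $\mu_1,\dots,\mu_n$. Since $t$ is not isolated, one can choose points $s$ near $t$ with $|\mu_i|(\{s\})$ small, and then Urysohn bump functions $\varphi$ supported in small neighbourhoods $U$ of $s$ with $|\mu_i|(U)$ small. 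The function $g=f-(f(s)+\overline{\text{sign}}\ldots)\varphi$ is a perturbation near $s$ with $g(s)\approx -f(t)$; it stays in $W$ and satisfies $\|f-g\|\approx 2$. Conversely, if $|f|<1-\delta$ on a neighbourhood of $K'$, the norm of $f$ is nearly attained only at finitely many isolated points, and the weak neighbourhood given by those Dirac deltas forbids distance two.

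\emph{Sufficiency.} Assume $K'$ is infinite, and let $W$ be a non-empty weak open subset of $B_{C(K)}$ determined by $h\in W$ and measures $\mu_1,\dots,\mu_n$ with tolerance $\varepsilon$. By a standard shrinking argument I may assume $\|h\|<1$. Since $K'$ is infinite, it contains infinitely many points, so I can pick pairwise disjoint open sets $U_1,U_2,\dots$ each meeting $K'$. Since $\sum_i|\mu_i|$ is finite, some $U_k$ has $|\mu_i|(U_k)<\varepsilon/4$ for all $i$. Choose $t\in U_k\cap K'$ and an Urysohn function $\varphi$ with $\varphi(t)=1$ and support in $U_k$, and set $f=h+(1-h(t))\varphi$, adjusted so that $\|f\|\le 1$. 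One way to do this is $f=(1-\varphi)h+\varphi$; then $f(t)=1$, $\|f\|\le1$, and $|\int (f-h)\,d\mu_i|\le 2|\mu_i|(U_k)<\varepsilon/2$. Hence $f\in W$ and $f$ is a super $\Delta$-point, so the set of super $\Delta$-points is weakly dense.

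\emph{Necessity.} Assume $K'$ is finite, say $K'=\{t_1,\dots,t_m\}$; it may be empty. Take $W=\{g\in B_{C(K)}: |g(t_j)|<1/2 \text{ for all } j\}$. This is a non-empty weak open set containing $0$. Any super $\Delta$-point $f$ must satisfy $|f(t)|=1$ for some $t\in K'$, which is impossible for $f\in W$. Hence no super $\Delta$-point lies in $W$. When $K'=\emptyset$, $K$ is finite, $C(K)$ is finite dimensional, and there are no super $\Delta$-points at all. The equivalence "$K'$ infinite iff $K''\neq\emptyset$" follows from compactness of the closed set $K'$: an infinite compact set has a cluster point, and that point lies in $K''$. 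Conversely, if $K''\neq\emptyset$, any point of $K''$ is a limit of points of $K'$, so $K'$ is infinite.

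The main obstacle is proving the characterisation of super $\Delta$-points in $C(K)$, especially the necessity direction. There one has to show that attaining the norm only at isolated points forbids diametrality. The argument I would try first: $f$ is close to norm $1$ only on finitely many isolated points $s_1,\dots,s_p$, which are open. Consider the weak neighbourhood $\{g: |g(s_l)-f(s_l)|<\delta\}$. For any $g$ in it, $|f-g|$ is at most $\delta$ at the $s_l$ and at most $1+(1-\delta')$ elsewhere. Hence $\|f-g\|\le 2-\delta''$, so $f$ is not a super $\Delta$-point.
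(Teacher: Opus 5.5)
Your proposal is correct and follows essentially the paper's route. Necessity uses the same weakly open set $\{g\in B_{C(K)}\colon |g(t_j)|<\tfrac12\}$ together with the characterisation of super $\Delta$-points in $C(K)$ from \cite{MPR}, and sufficiency uses the same perturbations $(1-\varphi)h+\varphi$ peaking at a cluster point; the only minor differences are that you check membership in the weak neighbourhood directly via $\sum_k|\mu_i|(U_k)\le\|\mu_i\|$ instead of invoking Rainwater's theorem for a pointwise-convergent sequence, and that you sketch the characterisation which the paper simply cites from \cite{MPR}.
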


\begin{proof}
   If $K'=\{t_1,\ldots,t_n\}$, define
\[
W=\left\{ f\in B_{C(K)} : |f(t_i)|<\tfrac{1}{2}, \ \forall\, i=1,\ldots,n \right\},
\]
which is clearly a weakly open subset of $B_{C(K)}$. Now, if $f$ is any super $\Delta$-point, there exists some $i\in\{1,\ldots,n\}$ such that $|f(t_i)|=1$ (see \cite[Proposition~4.1]{MPR}), and hence $f\notin W$. Consequently, super $\Delta$-points are not weakly dense in $B_{C(K)}$.

Conversely, assume that $K'$ is infinite and let $f\in B_{C(K)}$. We shall construct a sequence of super $\Delta$-points in $S_{C(K)}$ that converges weakly to $f$. To this end, choose a sequence $(t_n)_{n}\subseteq K'$ such that $t_n\neq t_m$ whenever $n\neq m$. Then there exists a sequence $(U_n)_n$ of pairwise disjoint open subsets of $K$ satisfying $t_n\in U_n$ for all $n\in\mathbb{N}$. For each $n\in\mathbb{N}$, let $g_n\colon K\to[0,1]$ be a Urysohn function such that $g_n(t_n)=1$ and $g_n(t)=0$ for all $t\notin U_n$. Define a sequence $(f_n)_n\subseteq B_{C(K)}$ by
\[
f_n := (1-g_n)f + g_n.
\]
First, observe that
\[
\|f_n\|\leq \sup_{t\in K}\big( (1-g_n(t))|f(t)| + g_n(t) \big)\leq 1,
\]
since $\|f\|\leq 1$ and $g_n(t)\in[0,1]$ for all $t\in K$. Moreover, $f_n(t_n)=1$ and $f_n(t)=f(t)$ for all $t\notin U_n$. By \cite[Theorem~4.2]{MPR}, it follows that each $f_n$ is a super $\Delta$-point. Finally, the pairwise disjointness of the sets $U_n$ implies that $(f_n)_n$ converges pointwise to $f$. Therefore, by Rainwater’s theorem (see, for instance, \cite[Theorem~3.134]{RojoYAmarillo}), the sequence $(f_n)_n$ converges weakly to $f$.
\end{proof}

Before continuing with the results of the section, we include the following remark, which is of independent interest.

\begin{remark}
The argument in the proof above actually yields a stronger conclusion than the weak-DD2P itself. Indeed, since the sequence $(g_n)_n$ constructed there does not depend on the particular choice of the function $f$, the argument can be pushed further. More precisely, let $K$ be a compact Hausdorff space such that $K'$ is infinite. Given functions $f_1,\ldots,f_n\in S_{C(K)}$ and scalars $\lambda_1,\ldots,\lambda_n\geq 0$ with $\sum_{i=1}^n \lambda_i=1$, one can construct sequences $(f_1^k)_k,\ldots,(f_n^k)_k$ in $S_{C(K)}$ such that $f_i^k\rightharpoonup f_i$ for every $1\leq i\leq n$ and, moreover, for each $k\in\mathbb{N}$ the function
\[
\sum_{i=1}^n \lambda_i f_i^k
\]
is simultaneously a ccs $\Delta$-point and a super $\Delta$-point. This follows from the fact that, in $C(K)$ spaces, both notions coincide (see \cite[Corollary~4.3]{MPR}). In fact, in this setting a function $f\in S_{C(K)}$ is a super $\Delta$-point (and hence a ccs $\Delta$-point) if and only if it attains its norm at some cluster point of $K$. As a consequence, it can be shown that for every convex combination of slices $C$ of $B_{C(K)}$ there exists a function $f\in C$ which is a ccs $\Delta$-point.

Concerning the Banach space property that \emph{every convex combination of slices contains a ccs $\Delta$-point}, the following observations are in order:
\begin{enumerate}
    \item This property implies that every convex combination of slices has diameter two, since it ensures that every such combination intersects the unit sphere; hence \cite[Theorem~3.4]{lmr} applies.
    
    \item The above property does not imply the Daugavet property. Indeed, if $K=[0,\omega_2]$, where $\omega_2$ denotes the second uncountable ordinal, then $C(K)$ fails the Daugavet property because $K$ contains isolated points. Nevertheless, $K$ has infinitely many cluster points, and thus $C(K)$ satisfies the property described above.
    
    \item The weak-DD2P does not imply the above property. Indeed, if $X$ is a Banach space with the weak-DD2P, Proposition~\ref{prop:sumabsosuben} shows that $X\oplus_2 X$ has the weak-DD2P. However, the $\ell_2$-sum of any two Banach spaces fails the SD2P (see \cite[Theorem~3.2]{abl}).
\end{enumerate}

Altogether, these observations show that the property that every convex combination of slices contains a ccs $\Delta$-point is not equivalent to any of the previously known diameter two properties.
\end{remark}

Next, with the aid of Proposition~\ref{prop:denseC(K)}, we may distinguish the weak-DD2P from several previously studied properties.

\begin{example}\label{example:distintas}
The following examples illustrate that the weak-DD2P differs from a number of existing properties.
\begin{enumerate}
    \item It is well known that $c_0$ has the D2P, but fails the weak-DD2P and even the convex-DLD2P, since it does not contain any $\Delta$-point (see \cite[Example~2.4.5]{Pirkthesis} for further details).
    
    \item The space $c=C([0,\omega_0])$ enjoys the convex-DLD2P, as the set of super $\Delta$-points generates its unit ball as a closed convex hull (see \cite[Proposition~2.4.3]{Pirkthesis}). However, it fails the weak-DD2P by Proposition~\ref{prop:denseC(K)}.
    
    \item Let $K$ be a compact Hausdorff topological space with at least one isolated point and such that $K'$ is infinite (for instance, $K=[0,1]\cup\{2\}$). Then $C(K)$ has the weak-DD2P by Proposition~\ref{prop:denseC(K)}. Nevertheless, it fails the DD2P, since it is easy to construct norm-one functions that do not attain their norm at a cluster point. By \cite[Theorem~4.2]{MPR}, such functions cannot be super $\Delta$-points.
    
    \item Let $X$ be a Müntz space $M_0(\Lambda)$ with $\lambda_1\geq 1$. Then $X$ has the convex-DLD2P by virtue of \cite[Theorem~5.7]{ahlp}. We claim, however, that $X$ fails the weak-DD2P. Indeed, let $f\in B_X$ satisfy $|f(1)|<1$ (for example, $f(t)=t^{\lambda_1}-t^{\lambda_2}$). Suppose, towards a contradiction, that there exists a net $(f_\alpha)_\alpha$ of super $\Delta$-points converging weakly to $f$. By \cite[Theorem~3.13]{ahlp}, we have $|f_\alpha(1)|=1$ for all $\alpha$. Consequently, the net $(\delta_1(f_\alpha))_\alpha$ cannot converge to $\delta_1(f)$, which yields the desired contradiction. We refer the interested reader to \cite{ahlp,Pirkthesis} for more information on diameter two properties on Müntz spaces.
\end{enumerate}
\end{example}

Our next goal is to extend the sufficient condition from Proposition~\ref{prop:denseC(K)} to the space $C_0(L,X)$ of vector-valued continuous functions on a locally compact Hausdorff space $L$ which vanish at infinity. This can be achieved with essentially the same argument.

\begin{proposition}\label{prop:C0(L,X)}
Let $L$ be a locally compact Hausdorff space and let $X$ be a Banach space. If $L'$ is infinite, then $C_0(L,X)$ has the weak-DD2P.
\end{proposition}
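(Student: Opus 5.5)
We mimic the proof of Proposition~\ref{prop:denseC(K)}. Fix $f\in B_{C_0(L,X)}$ and aim to build a sequence of super $\Delta$-points of $C_0(L,X)$ converging weakly to $f$. Since $L'$ is infinite, pick pairwise distinct $t_n\in L'$. Then choose pairwise disjoint open sets $U_n\ni t_n$ with compact closure; this is possible in a Hausdorff space by the standard inductive argument, passing to a subsequence and shrinking if needed. By Urysohn's lemma for locally compact spaces, take $g_n\in C_0(L)$ with $0\leq g_n\leq 1$, $g_n(t_n)=1$ and $\operatorname{supp} g_n\subseteq U_n$. Fix any $x_0\in S_X$. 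If $\|f(t_n)\|=1$ we may simply leave $f$ unchanged near $t_n$; otherwise set
\[
f_n:=(1-g_n)f+g_n\, x_n,
\]
where $x_n\in S_X$ is chosen as $f(t_n)/\|f(t_n)\|$ when $f(t_n)\neq 0$, and $x_n=x_0$ otherwise. In both cases $f_n\in C_0(L,X)$, $\|f_n\|\leq 1$ by the convexity estimate used in the scalar case, $\|f_n(t_n)\|=1$, and $f_n=f$ outside $U_n$.

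Next we show that each $f_n$ is a super $\Delta$-point. The known vector-valued analogue of \cite[Theorem~4.2]{MPR} states that a norm-one $h\in C_0(L,X)$ attaining its norm at a cluster point of $L$ is a super $\Delta$-point. If we cannot cite it, we prove it directly, and this is the main obstacle. Let $W$ be a weak neighbourhood of $h$ in the ball, determined by finitely many functionals, which are $X^*$-valued regular measures. Let $t_0\in L'$ be a point with $\|h(t_0)\|=1$, and let $\varepsilon>0$. Choose a small open $V\ni t_0$ on which $\|h-h(t_0)\|<\varepsilon$ and whose measure is small for the finitely many functionals involved. Pick $s\in V\setminus\{t_0\}$ and a tiny neighbourhood $O\subseteq V$ of $s$ avoiding $t_0$. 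Then modify $h$ inside $O$ only, with a Urysohn bump, so that it becomes approximately $-h(t_0)$ at $s$ while keeping norm at most one; a bump of the form $(1-2\varphi)h$ works up to $\varepsilon$. The perturbation is small in weak sense because $|\mu|(O)$ is small, so the modified function lies in $W$, and its distance to $h$ is at least $\approx 2$ at $s$. This requires regularity of the representing measures, which is where care is needed.

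Finally, weak convergence follows. For $\mu$ in the dual, the dual of $C_0(L,X)$ consists of regular $X^*$-valued measures of bounded variation, and
\[
|\langle \mu, f_n-f\rangle|\leq 2\,|\mu|(U_n).
\]
Since the $U_n$ are disjoint and $|\mu|$ is finite, $|\mu|(U_n)\to 0$. Hence $f_n\rightharpoonup f$ without even needing Rainwater's theorem. Alternatively, use Rainwater with extreme points of $B_{C_0(L,X)^*}$, which are $x^*\otimes\delta_t$, and the bound $\|f_n(t)-f(t)\|\to 0$ pointwise. Therefore super $\Delta$-points are weakly dense in $B_{C_0(L,X)}$.
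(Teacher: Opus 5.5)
Your proposal is correct and follows essentially the same route as the paper. The shared ingredients are the perturbation $f_n=(1-g_n)f+g_n x_n$ by disjointly supported Urysohn bumps at distinct cluster points, and the vector-valued criterion that a norm-one function attaining its norm at a point of $L'$ is a super $\Delta$-point (the paper cites \cite[Theorem~4.1]{lrt}). For the weak convergence $f_n\rightharpoonup f$, the paper uses that $(g_n)_n$ is equivalent to the $c_0$-basis, rather than your estimate $|\mu|(U_n)\to 0$ or Rainwater.

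One caution about your optional direct proof of the criterion: you cannot require the neighbourhood $V$ of $t_0$ to have small $|\mu_j|$-measure, since some functional may have an atom at $t_0$. Instead, take infinitely many pairwise disjoint nonempty open subsets of $V\setminus\{t_0\}$ and let $O$ be one with $\sum_j |\mu_j|(O)$ small; such an $O$ exists because the $|\mu_j|$ are finite.
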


\begin{proof}
The proof is a straightforward adaptation of that of Proposition~\ref{prop:denseC(K)}. Fix $f\in B_{C_0(L,X)}$ and choose a sequence $(t_n)_n\subseteq L'$ with $t_n\neq t_m$ whenever $n\neq m$. Let $(U_n)_n$ be a sequence of pairwise disjoint open subsets of $L$ such that $t_n\in U_n$ for all $n\in\mathbb{N}$. For each $n\in\mathbb{N}$, let $g_n\colon L\to[0,1]$ be a Urysohn function satisfying $g_n(t_n)=1$ and $g_n(t)=0$ for all $t\notin U_n$. Fix $x_0\in S_X$ and define
\[
f_n \coloneqq (1-g_n)f + g_n x_0, \qquad n\in\mathbb{N}.
\]
It is immediate that $(f_n)_n\subseteq B_{C_0(L,X)}$. Moreover, each function $f_n$ attains its norm at the point $t_n\in L'$, and hence $f_n$ is a super $\Delta$-point by \cite[Theorem~4.1]{lrt}. 

Finally, to show that $(f_n)_n$ converges weakly to $f$, note that the pairwise disjointness of the sets $U_n$ implies that the sequence $(g_n)_n$ is equivalent to the canonical basis of $c_0$ and, therefore, it is weakly null. Since $f_n-f = g_n(x_0-f)$, it follows that $f_n\rightharpoonup f$. Since $f$ is  arbitrary, the result follows.
\end{proof}

We finish this subsection providing with a necessary condition for the weak-DD2P which is implicitly contained in Proposition~\ref{prop:denseC(K)} through the identification of the extreme points of the dual unit ball of a $C(K)$ space. Indeed, if $K$ is a compact Hausdorff space, it is well known that $K$ is homeomorphic to the weak$^*$-compact set $(\{\delta_t \colon t\in K\},w^*)$ and that
\[
\ext(B_{C(K)^*})=\{\delta_t \colon t\in K,\,\lambda\in \K,\,|\lambda|=1\}
\]
(that is, the extreme points of $B_{C(K)^*}$ are precisely the unimodular multiples of the evaluation functionals). Consequently, Proposition~\ref{prop:denseC(K)} can be rephrased as follows: the space $C(K)$ has the weak-DD2P if and only if the quotient space obtained from $\ext(B_{C(K)^*})$ by identifying linearly dependent functionals contains infinitely many isolated points. To generalise and formalise this idea, we introduce the following notation. Given a Banach space $X$, we define an equivalence relation on $S_{X^*}$ by declaring $f\sim g$ if and only if $f$ and $g$ are linearly dependent. 
We endow the quotient set $S_{X^*}/\!\sim$ with the topology induced by the weak$^*$ topology and denote by
\[
\pi\colon S_{X^*}\longrightarrow S_{X^*}/\!\sim
\]
the corresponding quotient map, which is onto, continuous and open.

We are now in a position to establish the announced necessary condition.

\begin{proposition}\label{prop:condineceweakdd2p}
    Let $X$ be a Banach space with the weak-DD2P. Then $\pi(\ext(B_{X^*}))'$ is an infinite set.
\end{proposition}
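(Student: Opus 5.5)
We argue by contraposition, mimicking the first half of the proof of Proposition~\ref{prop:denseC(K)}. Suppose $\pi(\ext(B_{X^*}))'$ is finite, say equal to $\{\pi(\varphi_1),\ldots,\pi(\varphi_n)\}$ with $\varphi_i\in\ext(B_{X^*})$; the set may also be empty. Consider the non-empty relatively weakly open set
\[
W=\left\{x\in B_X : |\varphi_i(x)|<\tfrac12,\ i=1,\ldots,n\right\},
\]
where $W=B_X$ when $n=0$. It contains $0$. We will show that $W$ contains no super $\Delta$-point, so super $\Delta$-points are not weakly dense and $X$ fails the weak-DD2P.

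The key step is a general necessary condition for super $\Delta$-points, extending \cite[Proposition~4.1]{MPR}: if $x\in S_X$ is a super $\Delta$-point and $\varepsilon>0$, then there is $\varphi\in\ext(B_{X^*})$ with $\pi(\varphi)\in\pi(\ext(B_{X^*}))'$ and $|\varphi(x)|>1-\varepsilon$. Here is the plan. Let $E=\ext(B_{X^*})$ and $D=\pi(E)\setminus\pi(E)'$, the set of isolated points of $\pi(E)$. Assume, towards a contradiction, that $|\varphi(x)|\le 1-\varepsilon$ for every $\varphi\in E$ with $\pi(\varphi)$ a cluster point.

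\begin{enumerate}
\item We first aim to show that the set $F=\{\pi(\varphi) : \varphi\in E,\ |\varphi(x)|\ge 1-\varepsilon/2\}$ is finite. Its weak$^*$-closure (taken in $S_{X^*}$) contains only points with $|\varphi(x)|\ge1-\varepsilon/2$. Since $\{\varphi:|\varphi(x)|\ge 1-\varepsilon/2\}\cap B_{X^*}$ is weak$^*$-compact and misses $0$, a cluster point of $F$ would come from a norm-$\le1$ limit of extreme points that stays away from $0$. After normalising, this should yield a cluster point of $\pi(E)$ where $|\cdot(x)|$ is large. This contradicts the assumption, once we check that the accumulation can be realised inside $\pi(E)$ and not merely inside $\pi(\overline{E}^{w^*})$. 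This is the step I expect to be the main obstacle. I would handle it by working in $\pi(\overline{E}^{w^*}\cap S_{X^*})$, or by restricting from the outset to weak$^*$-limits of extreme points and using that the $\pi$-image of an infinite set in a compact set has a cluster point.

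\item With $F=\{\pi(\psi_1),\ldots,\pi(\psi_m)\}$, each $\pi(\psi_j)$ isolated, we look at the weakly open neighbourhood
\[
V=\{y\in B_X : |\psi_j(y)-\psi_j(x)|<\delta,\ j=1,\ldots,m\}
\]
of $x$. Every $y\in V$ satisfies
\[
\|x-y\|=\sup_{\varphi\in E}|\varphi(x)-\varphi(y)|.
\]
For $\varphi$ outside $\pi^{-1}(F)$ this quantity is at most $(1-\varepsilon/2)+1$. For $\varphi\in\pi^{-1}(F)$ it is at most $\delta$, since $\varphi=\lambda\psi_j$ with $|\lambda|=1$.

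\item Hence $\sup_{y\in V}\|x-y\|\le 2-\varepsilon/2$, so $x$ is not a super $\Delta$-point. This contradiction proves the claim.
\end{enumerate}

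Finally, apply the claim to any super $\Delta$-point $x$ with $\varepsilon=1/4$. We get $\varphi\in E$ with $\pi(\varphi)=\pi(\varphi_i)$ for some $i$, so $\varphi=\lambda\varphi_i$ with $|\lambda|=1$, and $|\varphi_i(x)|>3/4$. Thus $x\notin W$. When $n=0$ the claim gives no such $\varphi$ at all, so no super $\Delta$-point exists, which is an even stronger failure.
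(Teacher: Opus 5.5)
Your architecture matches the paper's. You argue by contraposition with $W=\{x\in B_X\colon |x_i^*(x)|<\frac12\ \forall i\}$ and show that $W$ contains no super $\Delta$-points. The paper takes this last fact from \cite[Remark~3.5]{MartinRueda-symm}. Your attempt to prove it fails at Step~1, which is false as formulated, for two independent reasons.

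\emph{Extreme representatives.} In the paper, $\pi(\ext(B_{X^*}))'$ is the set of cluster points in $S_{X^*}/\!\sim$ (compare the explicit intersection $\pi(A)'\cap\pi(A)$ in Theorem~\ref{thm:sufficient-for-ell1-norming}). Its proof takes representatives $x_i^*\in S_{X^*}$, which need not be extreme. Your key claim, that a super $\Delta$-point is almost normed by an extreme $\varphi$ with $\pi(\varphi)\in\pi(\ext(B_{X^*}))'$, is false. Let $K=[0,\omega_0]\cup\{a,b\}$ with $a,b$ isolated, and let $X=\{f\in C(K)\colon 2f(\omega_0)=f(a)+f(b)\}$.
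Every extreme point of $B_{X^*}$ is a unimodular multiple of some $\delta_t|_X$. The functional $\delta_{\omega_0}|_X=\frac12(\delta_a+\delta_b)|_X$ is not extreme. The functions $\chi_{\{n\}}$, $\chi_{\{a\}}+\frac12\chi_{[0,\omega_0]}$ and $\chi_{\{b\}}+\frac12\chi_{[0,\omega_0]}$ lie in $X$ and isolate all the remaining classes. So no extreme point has a cluster class; the only cluster point is $\pi(\delta_{\omega_0}|_X)$.
Yet $\mathbf{1}$ is a super $\Delta$-point, since $\mathbf{1}-2\chi_{\{n\}}\to\mathbf{1}$ weakly at distance $2$. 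In your reading $n=0$ here, so your last paragraph would wrongly conclude that $X$ has no super $\Delta$-points.

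\emph{Fixed threshold.} Even with representatives in $S_{X^*}$, the threshold $1-\varepsilon/2$ does not work. A weak$^*$-accumulation point $\psi$ of $\{\varphi\in\ext(B_{X^*})\colon|\varphi(x)|\geq1-\varepsilon/2\}$ only satisfies $\|\psi\|\geq1-\varepsilon/2$, and normalising does not preserve the convergence. For $c=1-\varepsilon/4$, take $X=\{f\in C([0,\omega_0]\cup\{a\})\colon f(\omega_0)=cf(a)\}$ and $x=\chi_{\{a\}}+c\chi_{[0,\omega_0]}$. All classes $\pi(\delta_n|_X)$ lie in $F$. They accumulate only at $\pm c\,\delta_a|_X\notin S_{X^*}$, and $\pi(\ext(B_{X^*}))'=\emptyset$. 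Likewise, $S_{X^*}/\!\sim$ is not compact, so the compactness argument you suggest gives nothing there.

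\emph{The repair} is to let the threshold tend to $1$. Your Step~2 is correct and never uses isolation. It shows that if $x$ is a super $\Delta$-point, then $F_\eta=\{\pi(\varphi)\colon \varphi\in\ext(B_{X^*}),\ |\varphi(x)|\geq 1-\eta\}$ is infinite for every $\eta>0$. Let $L$ be the set of those $\psi\in B_{X^*}$ every weak$^*$-neighbourhood of which meets $\mathbb{T}\varphi$ for infinitely many distinct classes $\pi(\varphi)$ with $\varphi\in\ext(B_{X^*})$. Then $L$ is weak$^*$-closed.
\begin{enumerate}
\item Take an accumulation point in $B_{X^*}$ of representatives of the infinitely many classes in $F_{1/m}$. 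This gives $\psi_m\in L$ with $|\psi_m(x)|\geq 1-\frac1m$.
\item A weak$^*$-cluster point $\psi_0$ of $(\psi_m)_m$ lies in $L$ and satisfies $|\psi_0(x)|=1$. Hence $\|\psi_0\|=1$ and $\pi(\psi_0)\in\pi(\ext(B_{X^*}))'$.
\item If the cluster classes are $\pi(x_1^*),\ldots,\pi(x_n^*)$ with $x_i^*\in S_{X^*}$, then $\psi_0=\lambda x_i^*$ for some $i$ and some $|\lambda|=1$. Thus $|x_i^*(x)|=1$ and $x\notin W$.
\end{enumerate}
Replace Step~1 by this and build $W$ from these $x_i^*$ rather than from extreme points. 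Your argument then becomes a self-contained proof of the statement, replacing the citation the paper relies on.
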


\begin{proof}
   Let $I$ be an index set for the elements of $\pi(\ext(B_{X^*}))'$, and for each $i\in I$ choose a representative $x_i^*\in S_{X^*}$ of the corresponding equivalence class in $\pi(\ext(B_{X^*}))'$. By  \cite[Remark~3.5]{MartinRueda-symm}, the set
\[
W:=\bigcap\nolimits_{i\in I}\bigl\{ x\in B_X \colon |x_i^*(x)|<\tfrac{1}{2} \bigr\}
\]
contains no super $\Delta$-points (in fact, no $\Delta$-points at all). Hence, since $X$ has the weak-DD2P, the set $A$ cannot be weakly open in $B_X$, and therefore the index set $I$ must be infinite.
\end{proof}

\begin{remark}
Let us denote by $\operatorname{NA}(X)\subseteq X^*$ the set of norm-attaining functionals, that is, those $x^*\in X^*$ for which there exists $x\in S_X$ satisfying $x^*(x)=\|x^*\|$. Using again \cite[Remark~3.5]{MartinRueda-symm}, one obtains the following slightly stronger version of Proposition~\ref{prop:condineceweakdd2p}: if a Banach space $X$ has the weak-DD2P, then the set
\[
\pi(\ext(B_{X^*}))'\cap \pi\bigl(\operatorname{NA}(X)\bigr)
\]
must be infinite.
\end{remark}

Let us observe that  Proposition~\ref{prop:condineceweakdd2p} is not sufficient in general. (Indeed, this can already be seen by considering the case $X=\ell_2$.)

\subsection{Spaces with norming \texorpdfstring{$\ell_1$}{l1}-structure}
Our next objective is to obtain a converse to Proposition~\ref{prop:condineceweakdd2p} for the class of $L_1$-preduals and for the class of unital uniform algebras. To this end, we shall work in the broader setting of Banach spaces with norming $\ell_1$-structures.

We recall that a Banach space $X$ has a \textit{norming $\ell_1$-structure} if there exists a subset $A$ of $S_{X^*}$ which is one-norming for $X$ and such that $\K a^*$ is an $L$-summand of $X^*$ for every $a^*\in A$ (i.e. $X^*=\K a^*\oplus_1 Z$ for some $Z\subseteq X^*$). We refer to \cite[Section 3.5.2]{DaugavetBook} for further information regarding this definition. Recall that a Banach space $X$ is said to have a \emph{norming $\ell_1$-structure} if there exists a subset $A\subseteq S_{X^*}$ which is one-norming for $X$ and such that $\K a^*$ is an $L$-summand of $X^*$ for every $a^*\in A$, that is,
\[
X^*=\K a^*\oplus_1 Z
\]
for some subspace $Z\subseteq X^*$. We refer the reader to \cite[Section~3.5.2]{DaugavetBook} for further background on this notion.

\begin{theorem}\label{thm:sufficient-for-ell1-norming}
    Let $X$ be a Banach space with norming $\ell_1$-structure witnessed by a set $A\subseteq S_{X^*}$. If $\pi(A)'\cap \pi(A)$ is an infinite set, then $X$ has the weak-DD2P.
\end{theorem}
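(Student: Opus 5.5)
Fix $x\in B_X$ and a basic weak neighbourhood $W=\{y\in B_X\colon |x_j^*(y-x)|<\varepsilon,\ j=1,\dots,m\}$ with $x_j^*\in X^*$. The goal is to find a super $\Delta$-point in $W$. As in Proposition~\ref{prop:denseC(K)}, we modify $x$ only near one functional $a^*\in A$ with $\pi(a^*)\in\pi(A)'$. The decisive input is the known criterion for spaces with norming $\ell_1$-structure, which we will use (from \cite{MartinRueda-symm} or \cite{DaugavetBook}): if $z\in S_X$ satisfies $|a^*(z)|=1$ for some $a^*\in A$ with $\pi(a^*)\in\pi(A)'$, then $z$ is a super $\Delta$-point. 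The idea behind it is that if a weak neighbourhood $V$ of $z$ is given, one picks $b^*\in A$ close to $\lambda a^*$ in the quotient weak$^*$ topology and uses the $L$-decomposition $X^*=\K b^*\oplus_1 Z$ to find $y\in V$ with $b^*(y)\approx -b^*(z)$. If this criterion is not available in the form needed, proving it will be the first step, mimicking \cite[Theorem~4.2]{MPR}.

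The construction uses the infinitude of $\pi(A)'\cap\pi(A)$. We choose infinitely many distinct classes, with representatives $a_n^*\in A$ such that $\pi(a_n^*)\in\pi(A)'$. It remains to find $z_n\in B_X$ with $|a_n^*(z_n)|=1$ and $z_n-x\to 0$ weakly along a subsequence, or at least $|x_j^*(z_n-x)|<\varepsilon$ for some $n$. For this we use the $L$-summand structure. Since $X^*=\K a_n^*\oplus_1 Z_n$, the functional $a_n^*$ is an extreme point of $B_{X^*}$, and there is an associated $L$-projection $P_n$ on $X^*$. For distinct classes the $L$-projections commute and have disjoint ranges. Hence for any $x^*\in X^*$ we get $\sum_n\|P_n x^*\|\le\|x^*\|$, so $P_nx_j^*\to 0$ in norm for each $j$.

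We now approximate. Write $a_n^*(x)=\alpha_n$ with $|\alpha_n|\le1$, and we want to push $a_n^*$ to modulus $1$ while keeping the remaining part $Z_n$ essentially fixed. In the bidual, take $u_n\in B_{X^{**}}$ acting as $\operatorname{sign}(\alpha_n)$ (or $1$ if $\alpha_n=0$) on $a_n^*$ and as $x$ on $Z_n$. Because the decomposition is an $\ell_1$-sum, $\|u_n\|\le1$. Then $u_n-x$ vanishes on $Z_n$, so $|x_j^*(u_n-x)|\le 2\|P_nx_j^*\|\to0$. Goldstine's theorem, together with the principle of local reflexivity or a direct slice argument, then gives $z_n\in B_X$ that is weak-close to $x$ and has $|a_n^*(z_n)|>1-\delta$. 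Since the criterion requires exact norm attainment at $a_n^*$, we pass to the exact condition using the fact that $\pi(a_n^*)\in\pi(A)$ as well: this is where membership in $\pi(A)$, and not only in $\pi(A)'$, is needed. The near-attainment is then upgraded by a small perturbation within $W$. Alternatively, we use a version of the criterion stating that a point is a super $\Delta$-point whenever $|a^*(z)|=1$ holds for the functional in the accumulating class, combined with a Bishop--Phelps type adjustment.

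The main obstacle is this last step: producing an element of $X$, not just of $X^{**}$, that exactly attains $|a_n^*(\cdot)|=1$ while remaining in $W$. The first attempt will be to take the weak$^*$ approximants and apply a convex-combination and slice argument in the $L$-summand decomposition. Because the norm in $X^*$ splits as $|\lambda|+\|z^*\|$, the slice $\{y\colon \operatorname{Re}\overline{\operatorname{sign}}\,a_n^*(y)>1-\delta\}$ intersected with $W$ is nonempty. From there the plan is to show that this slice contains points at which the criterion for super $\Delta$-points applies, perhaps via a relaxed criterion stating that points nearly attaining $a^*$ are close to super $\Delta$-points.
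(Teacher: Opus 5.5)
Your bidual elements $u_n$ are essentially the paper's functionals $x_n^{**}$. Your proof that $u_n\to x$ in the weak$^*$ topology, from $\sum_n\|P_nx^*\|\le\|x^*\|$ and $(u_n-x)(x^*)=(u_n-x)(P_nx^*)$, is correct, and a bit more direct than the paper's operator $T\colon c_0\to X^{**}$.

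However, the proof stops at the step you yourself flag as the main obstacle, and that step is the heart of the theorem. Goldstine only gives $z_n\in W$ with $|a_n^*(z_n)|>1-\delta$. Lemma~\ref{lemma:ell1-norming-suff-superDelta} needs exact attainment, $z_n\in F(B_X,\theta a_n^*)$. None of the suggested upgrades works as stated:
\begin{itemize}
\item A Bishop--Phelps(--Bollob\'as) adjustment replaces $a_n^*$ by a nearby functional. That functional need not lie in $A$ or span an $L$-summand, so the criterion is lost.
\item A ``small perturbation within $W$'' or a ``relaxed criterion'' is just a restatement of what has to be proved.
\end{itemize}
Also, the membership $\pi(a_n^*)\in\pi(A)$ plays no role in passing from near to exact attainment. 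It is what supplies the $L$-projection $P_n$, and hence $u_n$, in the first place.

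The missing ingredient is the identity
\[
F\bigl(B_{X^{**}},a^*\bigr)=\overline{F(B_X,a^*)}^{w^*}
\]
whenever $a^*\in S_{X^*}$ and $\K a^*$ is an $L$-summand of $X^*$. One route to it:
\begin{enumerate}
\item Such $a^*$ is a spear vector of $X^*$, because $\max_{|\theta|=1}\|a^*+\theta(\lambda a^*+z^*)\|=1+|\lambda|+\|z^*\|$. Hence $B_X=\acconv F(B_X,a^*)$ by \cite[Theorem~2.9]{SpearsBook}.
\item Therefore $B_{X^{**}}$ is the weak$^*$-closed convex hull of the weak$^*$-compact set $\mathbb{T}\,\overline{F(B_X,a^*)}^{w^*}$.
\item By the reversed Krein--Milman theorem, every extreme point $e$ of the weak$^*$-closed face $F(B_{X^{**}},a^*)$ has the form $\theta y$ with $y\in\overline{F(B_X,a^*)}^{w^*}$. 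Since $e(a^*)=1$, necessarily $\theta=1$.
\item Krein--Milman then yields the identity.
\end{enumerate}

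With this identity no approximation inside $X$ is needed. Your $u_n$ lies in $F(B_{X^{**}},\overline{\theta_n}a_n^*)$ with $\theta_n=\operatorname{sign}(\alpha_n)$; alternatively, give $u_n$ the value $1$ on $a_n^*$, as the paper does. So $u_n$ is a weak$^*$-limit of points of $B_X$ at which $|a_n^*|$ equals $1$ exactly. These points are super $\Delta$-points by Lemma~\ref{lemma:ell1-norming-suff-superDelta}, applied to the norming set $\mathbb{T}A$. Since $u_n\to x$ weak$^*$, $x$ lies in the weak closure of the super $\Delta$-points. Without this density statement, or an equivalent substitute, your proposal does not reach the conclusion.
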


Our proof of the theorem is based on the one of \cite[Theorem~3.5.14]{DaugavetBook}. We shall use the following notation. Given $x^*\in S_{X^*}$, we write
\begin{align*}
    F(B_X,x^*) &:= \{ x\in B_X : x^*(x)=1 \}.
\end{align*}

We shall need the following result, which we prefer to extract from the main proof for the sake of clarity.

\begin{lemma}\label{lemma:ell1-norming-suff-superDelta}
Let $X$ be a Banach space with a norming $\ell_1$-structure witnessed by a set $A\subseteq S_{X^*}$, and let $x^*\in A$ be such that $\pi(x^*)\in \pi(A)'$. Then every $x\in F(B_X,x^*)$ is a super $\Delta$-point.
\end{lemma}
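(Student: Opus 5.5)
Fix $x\in F(B_X,x^*)$, a relatively weakly open set $W\subseteq B_X$ containing $x$, and $\varepsilon>0$. The aim is to produce $y\in W$ with $\|x-y\|>2-\varepsilon$. After shrinking $W$ we may take it to be a basic neighbourhood
\[
W=\{z\in B_X\colon |z_j^*(z-x)|<\delta,\ j=1,\dots,m\}
\]
with $z_j^*\in X^*$. The idea, following \cite[Theorem~3.5.14]{DaugavetBook}, is to use the $L$-summand decompositions $X^*=\K a^*\oplus_1 Z_{a^*}$ for $a^*\in A$ close to $x^*$. These let us find elements of $B_X$ that look like $x$ to the functionals $z_j^*$ but take a value close to $-\overline{\lambda}$ at some $a^*$ for which $a^*(x)\approx\lambda$ with $|\lambda|\approx 1$.

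\textbf{Step 1: choosing the functional $a^*$.} Since $\pi(x^*)\in\pi(A)'$ and $\pi$ is open, every weak$^*$ neighbourhood of $x^*$ in $S_{X^*}$ contains some $a^*\in A$ with $a^*\not\sim x^*$, after multiplying by a unimodular scalar. Using the weak$^*$ neighbourhood defined by $x$, we get $a^*\in A$ with $\pi(a^*)\neq\pi(x^*)$ and $|a^*(x)-1|<\eta$, for a small $\eta$ to be fixed later.

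\textbf{Step 2: the key estimate.} Decompose $z_j^*=\mu_j a^*+w_j^*$ with $w_j^*\in Z_{a^*}$, and likewise $x^*=\mu a^*+w^*$. I need the coefficients $\mu_j$ to be small; this is the main obstacle. The plan is to use that the $L$-projections $P_{a^*}$ onto distinct one-dimensional $L$-summands are mutually orthogonal. Any finite family of distinct $\pi$-classes of $A$ spans an $\ell_1$-sum in $X^*$, so for each $z_j^*$ we have
\[
\sum_{a\in F}|P_a z_j^*|\le\|z_j^*\|
\]
for every finite set $F$ of pairwise non-equivalent elements of $A$. Hence only finitely many classes $\pi(a)$ have $|\mu_j(a)|\ge\delta'$. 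Since $\pi(x^*)$ is a cluster point, infinitely many classes lie in each neighbourhood, so we may choose $a^*$ outside these finitely many bad classes and still close to $x^*$. This gives $|\mu_j|<\delta'$ for all $j$.

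\textbf{Step 3: constructing $y$.} By the $L$-summand structure, $X^{**}=(\K a^*)^*\oplus_\infty Z_{a^*}^*$. Define $y^{**}\in B_{X^{**}}$ to equal $x$ on $Z_{a^*}$ and to send $a^*\mapsto -1$. Then $\|y^{**}\|\le 1$, $|y^{**}(z_j^*)-z_j^*(x)|\le 2|\mu_j|$ is small, and $\|y^{**}-x\|\ge |a^*(x)+1|>2-\eta$. Goldstine's theorem then yields $y\in B_X$ weak$^*$-close to $y^{**}$ on the finitely many functionals $z_j^*$ and $a^*$. This gives $y\in W$ and $|a^*(y)+1|$ small, so $\|x-y\|\ge|a^*(x-y)|>2-\varepsilon$.

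The delicate point is Step 2. It requires checking that the $L$-projections associated with non-equivalent elements of $A$ commute and are orthogonal, which is standard for $L$-projections, and that their ranges sum in $\ell_1$. It also requires choosing $a^*$ to avoid the finitely many classes carrying large mass of the $z_j^*$ while keeping $a^*$ close to $x^*$ at the point $x$.
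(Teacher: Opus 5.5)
Your argument is correct and complete up to routine bookkeeping. The paper gives no argument of its own here: it notes that condition~(6) of \cite[Theorem~3.5.14]{DaugavetBook} yields the definition of a super $\Delta$-point, so all the content is delegated to the book.

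You instead reprove the relevant implication directly, using the same tools the paper uses in the proof of Theorem~\ref{thm:sufficient-for-ell1-norming}:
\begin{itemize}
\item mutual orthogonality of the $L$-projections onto the lines $\K a$, $a\in A$, which gives $\sum_{a\in F}\|P_a z^*\|\le\|z^*\|$ over pairwise non-equivalent $a$ (this follows from \cite[Lemma~2.9.4]{DaugavetBook});
\item a bidual element defined separately on $\K a^*$ and on $\ker P_{a^*}$;
\item Goldstine's theorem.
\end{itemize}
What your version buys is self-containedness and transparency about locality. It visibly uses only that the single class $\pi(x^*)$ is a cluster point, which the one-line citation leaves the reader to extract from the cited result.

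Two points deserve a sentence each in a write-up.
\begin{itemize}
\item The claim ``infinitely many classes in each neighbourhood'' uses that $S_{X^*}/\!\sim$ is $T_1$. This holds because the preimage $\{\theta y^*\colon |\theta|=1\}$ of each point is weak$^*$-compact, hence closed.
\item Steps~1 and~2 should be a single choice. Let $N=\{y^*\in S_{X^*}\colon |y^*(x)-1|<\eta\}$. Pick $a\in A$ with $\pi(a)\in\pi(N)$ lying outside the finitely many bad classes, and then pass to a unimodular multiple $a^*=\theta a\in N$. This changes neither the $L$-summand $\K a$ nor the moduli $|\mu_j|$.
\end{itemize}
With $2\delta'+\delta''<\delta$ and $\eta+\delta''<\varepsilon$, your estimates close.
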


\begin{proof}
It suffices to observe that condition~(6) in \cite[Theorem~3.5.14]{DaugavetBook} implies the definition of a super $\Delta$-point.
\end{proof}

\begin{proof}[Proof of Theorem~\ref{thm:sufficient-for-ell1-norming}]
Let $x\in B_X$. Choose a sequence $(\pi(a_n^*))_n\subseteq \pi(A)'\cap \pi(A)$ such that $\pi(a_n^*)\neq \pi(a_m^*)$ for all $n\neq m$ (that is, the functionals $a_n^*,a_m^*\in A$ are pairwise linearly independent). For each $n\in\mathbb{N}$, let $P_n$ be the $L$-projection satisfying
\[
X^*=\K a_n^*\oplus_1 \ker P_n
\]
(see \cite[Remark~3.5.9(a)]{DaugavetBook}), and define a linear functional $x_n^{**}\colon X^{**}\to\K$ by
\[
x_n^{**}(\lambda a_n^*+z^*)=\lambda+z^*(x), \qquad 
\lambda a_n^*+z^*\in \K a_n^*\oplus_1 \ker P_n=X^*.
\]
It is straightforward to verify that $x_n^{**}\in X^{**}$ with $\|x_n^{**}\|\leq 1$ for all $n\in\mathbb{N}$.

We claim that the sequence $(x_n^{**})_n$ converges weakly to $x$. To see this, define a linear operator $T\colon c_0\to X^{**}$ by
\[
T(e_n)=x_n^{**}-x, \qquad n\in\mathbb{N},
\]
where $(e_n)_n$ denotes the canonical basis of $c_0$. Fix $n\in\mathbb{N}$ and let $\lambda_1,\ldots,\lambda_n\in\K$. Set $P=P_1+\cdots+P_n$. By \cite[Lemma~2.9.4]{DaugavetBook}, $P$ is an $L$-projection, 
\[
X^*=P(X^*)\oplus_1 \ker P, \qquad 
\ker P=\bigcap_{i=1}^n \ker P_i,
\]
and
\[
B_{P(X^*)}=\aconv\{a_1^*,\ldots,a_n^*\}.
\]
Observe that, for each $i\in\{1,\ldots,n\}$ and every $z^*\in \ker P\subseteq \ker P_i$, we have
\[
x_i^{**}(z^*)-z^*(x)=0.
\]
Therefore,
\[
\Bigl\|\sum_{i=1}^n \lambda_i (x_i^{**}-x)\Bigr\|
= \max_{1\leq j\leq n}
\Bigl|\sum_{i=1}^n \lambda_i\bigl(x_i^{**}(a_j^*)-a_j^*(x)\bigr)\Bigr|.
\]
Since $x_i^{**}(a_j^*)-a_j^*(x)=0$ whenever $i\neq j$, it follows that
\[
\Bigl\|\sum_{i=1}^n \lambda_i (x_i^{**}-x)\Bigr\|
= \max_{1\leq j\leq n} |\lambda_j|\,
|x_j^{**}(a_j^*)-a_j^*(x)|
\leq 2 \max_{1\leq j\leq n} |\lambda_j|.
\]
Consequently, $T$ is bounded with $\|T\|\leq 2$, and hence weak-to-weak continuous. This proves that $(x_n^{**})_n$ converges weakly to $x$.

Now fix $n\in\mathbb{N}$. Since $\K a_n^*$ is an $L$-summand of $X^*$, we claim that
\[
x_n^{**}\in F\bigl(B_{X^{**}},J_{X^*}(a_n^*)\bigr)
= \overline{F(B_X,a_n^*)}^{\omega^*}.
\]
Indeed, we may use \cite[Definition~2.1, Example~2.12(a) and Theorem~2.9]{SpearsBook} and then the reversed Krein-Milman theorem or, alternatively,  \cite[Fact 2.9.2]{CR} and \cite[Theorem 2.3]{GI}. Hence, there exists a net $(x_\alpha)_\alpha\subseteq B_X$ converging weakly to $x$ such that, for each $\alpha$, there is some $n_\alpha\in\mathbb{N}$ with $x_\alpha\in F(B_X,a_{n_\alpha}^*)$. By Lemma~\ref{lemma:ell1-norming-suff-superDelta}, each $x_\alpha$ is a super $\Delta$-point, finishing thus the proof.
\end{proof}

As particular cases of the above theorem, we can now establish a converse of Proposition~\ref{prop:condineceweakdd2p} for the classes of $L_1$-predual spaces and unital uniform algebras. In the fist case, as it is shown in \cite[Example 3.5.11]{DaugavetBook}, every $L_1$-predual has a norming $\ell_1$-structure witnessed by the set $A=\ext(B_{X^*})$. In this setting, we denote by $E_X$ to the set $\pi(\ext(B_{X^*}))$ endowed with the quotient topology induced by the weak$^*$ topology. Observe that we are considering $E_X$ as a topological space; thus, in particular, the cluster points lie within it.

\begin{corollary}\label{cor:charweakdd2Pl1predual}
 Let $X$ be an $L_1$-predual. If $E_X'$ is an infinite set, then $X$ has the weak-DD2P.
\end{corollary}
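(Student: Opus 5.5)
The plan is to derive the corollary directly from Theorem~\ref{thm:sufficient-for-ell1-norming}, applied with $A=\ext(B_{X^*})$. By \cite[Example~3.5.11]{DaugavetBook}, an $L_1$-predual $X$ has a norming $\ell_1$-structure witnessed by $A=\ext(B_{X^*})\subseteq S_{X^*}$. So it only remains to check the hypothesis of the theorem, namely that $\pi(A)'\cap\pi(A)$ is infinite.

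The key point is to match the two notions of cluster point that appear. In Theorem~\ref{thm:sufficient-for-ell1-norming}, $\pi(A)'$ is the set of cluster points of $\pi(A)$ inside the ambient space $S_{X^*}/\!\sim$. In contrast, $E_X'$ is the derived set of $\pi(A)$ regarded as a topological space in its own right, with the subspace topology inherited from the quotient (this is how I read the definition of $E_X$). I would argue as follows.
\begin{enumerate}
\item A point $p\in E_X'$ belongs to $\pi(A)$ by definition.
\item Every neighbourhood of $p$ in $E_X$ is of the form $U\cap\pi(A)$, with $U$ open in $S_{X^*}/\!\sim$.
\item Hence $p$ being a cluster point in $E_X$ means exactly that every open $U\ni p$ meets $\pi(A)\setminus\{p\}$.
\end{enumerate}
This is precisely the statement $p\in\pi(A)'$. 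Therefore $E_X'=\pi(A)'\cap\pi(A)$, and this set is infinite by hypothesis.

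With the hypothesis verified, Theorem~\ref{thm:sufficient-for-ell1-norming} gives the weak-DD2P for $X$. The only delicate step is the identification of topologies. If the paper intends $E_X$ to carry the quotient topology of $\pi|_{\ext(B_{X^*})}$ rather than the subspace topology, I would instead use that $\pi$ is open on $S_{X^*}$. Concretely, I would show that a set $\pi(V\cap \ext(B_{X^*}))$, with $V$ weak$^*$-open and saturated, equals $\pi(V)\cap\pi(A)$. This holds because $A$ is closed under multiplication by unimodular scalars, so it is $\sim$-saturated within $S_{X^*}$. The two topologies therefore agree, and the same conclusion follows.
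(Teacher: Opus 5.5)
Your proposal is correct and follows the paper's own route. The paper obtains the corollary directly from Theorem~\ref{thm:sufficient-for-ell1-norming}, taking $A=\ext(B_{X^*})$ as the norming $\ell_1$-structure from \cite[Example~3.5.11]{DaugavetBook}, and uses that cluster points of $E_X$ are taken inside $E_X$, so that $E_X'=\pi(A)'\cap\pi(A)$. Your extra check that the quotient and subspace topologies on $\pi(\ext(B_{X^*}))$ agree is also correct; it uses that $\pi$ is open and that $\ext(B_{X^*})$ is $\sim$-saturated, a detail the paper leaves implicit.
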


For the second case, recall that a \emph{unital uniform algebra} is a closed subalgebra $X$ of $C(K)$, where $K$ is a compact Hausdorff topological space, which separates the points of $K$ (that is, for any $t\neq s$ in $K$ there exists $f\in X$ such that $f(t)\neq f(s)$) and contains the constant functions. The \emph{Choquet boundary} of $X$ is defined by
\[
\partial X := \{ s\in K \colon \delta_s|_X \in \ext(B_{X^*}) \},
\]
and is endowed with the topology induced from $K$. It is shown in \cite[Example~3.5.13]{DaugavetBook} that every unital uniform algebra $X$ admits a norming $\ell_1$-structure witnessed by the set
\[
A := \{ \delta_s|_X \colon s\in \partial X \}.
\]

\begin{corollary}\label{cor:charweakdd2p-unitaluniformalgebras}
Let $X$ be a unital uniform algebra. If $(\partial X)'$ is an infinite set, then $X$ has the weak-DD2P.    
\end{corollary}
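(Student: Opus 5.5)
The plan is to apply Theorem~\ref{thm:sufficient-for-ell1-norming} with $A:=\{\delta_s|_X\colon s\in\partial X\}$. By \cite[Example~3.5.13]{DaugavetBook}, this set witnesses a norming $\ell_1$-structure of $X$, so all that remains is to show that $\pi(A)'\cap\pi(A)$ is infinite whenever $(\partial X)'$ is infinite. Note that $\pi(A)\subseteq\pi(S_{X^*})$ carries the topology induced by the weak$^*$ topology via the quotient map $\pi$. The cluster points in $\pi(A)'\cap\pi(A)$ are cluster points of $\pi(A)$ that lie in $\pi(A)$ itself. Accordingly, I read $(\partial X)'$ as the set of cluster points of $\partial X$ in its own topology, i.e.\ points of $\partial X$ that are not isolated in $\partial X$.

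First I will show that the map $\Phi\colon\partial X\to\pi(A)$, $s\mapsto\pi(\delta_s|_X)$, is injective. If $\delta_s|_X=\lambda\delta_t|_X$, then evaluating at the constant function $1\in X$ gives $\lambda=1$. Since $X$ separates the points of $K$, this forces $s=t$. The map $\Phi$ is also continuous, because $s\mapsto\delta_s|_X$ is continuous from $K$ to $(X^*,w^*)$ and $\pi$ is continuous.

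The key step is to show that $\Phi$ is a homeomorphism onto $\pi(A)$. The natural route is to observe that every element of $\pi(A)$ has a canonical representative, namely the unique one taking the value $1$ at the constant function $1$. Concretely, if $\pi(\delta_{s_\alpha}|_X)\to\pi(\delta_s|_X)$, then by openness of $\pi$ and the definition of the quotient topology there are unimodular scalars $\lambda_\alpha$ with $\lambda_\alpha\delta_{s_\alpha}|_X\to\delta_s|_X$ weak$^*$ along a subnet. Testing against $1$ gives $\lambda_\alpha\to1$, hence $\delta_{s_\alpha}|_X\to\delta_s|_X$ weak$^*$. Testing against functions in $X$, which separate points, and using compactness of $K$ then yields $s_\alpha\to s$ in $K$. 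This step, handling nets and subnets in the quotient topology, is where I expect the main care to be needed.

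Once $\Phi$ is a homeomorphism, non-isolated points of $\partial X$ correspond exactly to non-isolated points of $\pi(A)$, that is, to $\pi(A)'\cap\pi(A)$. Since $\Phi$ is injective, infinitely many such points in $\partial X$ give infinitely many in $\pi(A)'\cap\pi(A)$, and Theorem~\ref{thm:sufficient-for-ell1-norming} then yields the weak-DD2P.
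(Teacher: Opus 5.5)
Your proposal is correct and follows the paper's proof: you apply Theorem~\ref{thm:sufficient-for-ell1-norming} with $A=\{\delta_s|_X\colon s\in\partial X\}$ and use that $s\mapsto\pi(\delta_s|_X)$ is a homeomorphism of $\partial X$ onto $\pi(A)$; the paper cites this fact from \cite[Remark~3.5.12]{DaugavetBook}, whereas you verify it by hand. The one point to tidy is that your lifting argument only gives convergence along a subnet. To conclude $s_\alpha\to s$, either run it on an arbitrary subnet, or test the constant function $1$ and a given $f\in X$ against a single weak$^*$-neighbourhood to get $f(s_\alpha)\to f(s)$ for every $f\in X$ directly.
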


\begin{proof}
It suffices to observe that, for the above set $A$, the space $\pi(A)$ is homeomorphic to $\partial X$ (use, for instance, \cite[Remark~3.5.12]{DaugavetBook}). 
\end{proof}

\subsection{Weak-DD2P for (vector-valued) function algebras} \label{subsection:functionalgebras}

In this section, we study the weak-DD2P in complex function algebras and their vector-valued counterparts. Besides providing further examples of spaces enjoying this property, this analysis will allow us to construct examples of tensor product spaces with the weak-DD2P in Subsection~\ref{subsection:tensores}. We begin by recalling the necessary definitions. In this subsection, all Banach spaces will be complex.

Let $\Omega$ be a regular Hausdorff topological space. A \emph{function algebra} on $\Omega$ is a closed subalgebra $A(\Omega)$ of the space $C_b(\Omega)$ of bounded continuous functions from $\Omega$ to $\C$ which separates the points of $\Omega$. When no confusion is likely to arise, we shall simply write $A$ instead of $A(\Omega)$. In the particular case where $\Omega$ is compact, we shall write $K=\Omega$. In this situation, a function algebra on $K$ that contains the constant functions is precisely a unital uniform algebra, as considered in the previous section.

As a vector-valued counterpart of a function algebra, as it is done in \cite{lt22}, we consider a closed subspace $A(\Omega,X)$ of $C_b(\Omega,X)$, the space of bounded continuous functions from $\Omega$ into a Banach space $X$, satisfying the following conditions:
\begin{enumerate}
    \item The base algebra
    \[
    A := \{ x^* \circ f \colon f \in A(\Omega,X),\ x^* \in X^* \}
    \]
    is a function algebra on $\Omega$.  
    \item $A\otimes X$ is contained in $A(\Omega,X)$, that is, for every $f\in A$ and every $x\in X$, the function $t\mapsto f(t)x$ belongs to $A(\Omega,X)$. 
    \item For every $f \in A$ and every $g \in A(\Omega,X)$, the pointwise product $fg$ belongs to $A(\Omega,X)$.
\end{enumerate}

In recent years, there has been a growing interest in the study of diameter two type properties and diametral points in (vector-valued) function algebras; see, for instance,  \cite{bl,cgkm,lrt,lt22}. Moreover, even though no substantial use of the deeper theory of function algebras is required here, we refer the reader to \cite{Dales, Leibowitz} for further background on this topic.

Before establishing the main results in this section we need a series of auxiliary lemmata. The first one is an adaptation of \cite[Lemma 2.2]{CGK} to our context, with exactly the same proof. 

\begin{lemma}
Let $\Omega$ be a regular Hausdorff topological space and let $A$ be a complex closed subalgebra of $C_b(\Omega)$. Let $M\subseteq \C$ and let $g\colon M\to \C$ be a function which is the uniform limit on $M$ of a sequence of complex polynomials. For every $f\in A$ such that $f(\Omega)\subseteq M$, the following statements hold:
\begin{itemize}
    \item[\upshape(a)] If $A$ is unital, then $g\circ f\in A$.
    \item[\upshape(b)] If $A$ is not unital, $0\in M$ and $g(0)=0$, then $g\circ f\in A$.
\end{itemize}
\end{lemma}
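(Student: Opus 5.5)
The plan is to approximate $g\circ f$ uniformly on $\Omega$ by elements of $A$ built from $f$, and then use that $A$ is closed in $C_b(\Omega)$.

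First, fix a sequence of complex polynomials $(p_n)_n$ with $p_n\to g$ uniformly on $M$. For each $n$, write $p_n(z)=\sum_{k=0}^{d_n} c_{k,n} z^k$.

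In case (a), $A$ contains the constant function $1$. Since $A$ is a subalgebra, every power $f^k$ lies in $A$, and so
\[
p_n\circ f=\sum_{k=0}^{d_n} c_{k,n} f^k \in A.
\]
In case (b) we only know that $f^k\in A$ for $k\geq 1$, so the constant term must be removed. We replace $p_n$ by $q_n:=p_n-p_n(0)$. Because $0\in M$ and $g(0)=0$, we have $|p_n(0)|=|p_n(0)-g(0)|\to 0$. Hence $q_n\to g$ uniformly on $M$ as well, and $q_n\circ f\in A$ because $q_n$ has no constant term.

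Next comes the only point that needs care, namely transferring uniform convergence on $M$ to uniform convergence on $\Omega$. Since $f(\Omega)\subseteq M$,
\[
\sup_{t\in\Omega}|p_n(f(t))-g(f(t))|\leq \sup_{z\in M}|p_n(z)-g(z)|\longrightarrow 0,
\]
and the same estimate holds with $q_n$ in place of $p_n$. So $p_n\circ f$ (respectively $q_n\circ f$) converges to $g\circ f$ in the sup norm of $C_b(\Omega)$. In particular $g\circ f$ is bounded and continuous, being a uniform limit of bounded continuous functions.

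Finally, since $A$ is norm-closed in $C_b(\Omega)$, the limit $g\circ f$ belongs to $A$. There is no genuine obstacle here; the only subtlety is the adjustment of the constant term in (b), which is handled as above.
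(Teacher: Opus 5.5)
Your proof is correct and follows the same route as the paper. The paper gives no argument of its own and says the proof is exactly that of \cite[Lemma~2.2]{CGK}: polynomials in $f$ lie in $A$ (with the constant term removed in (b), justified by $p_n(0)\to g(0)=0$ since $0\in M$), uniform convergence on $M$ transfers to $\Omega$ because $f(\Omega)\subseteq M$, and closedness of $A$ concludes; you are also right to read ``unital'' as ``contains the constant function $1$'', which is what (a) needs.
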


Given a subalgebra $A$ of $C_b(\Omega)$, we denote by $\SB(A)$ the set of strong boundary points of $A$. Recall that a point $\omega_0\in \Omega$ is a \textit{strong boundary point} (also called weak peak point, generalized peak point or $p$-point) if for every open set $U\subseteq \Omega$ containing $\omega_0$, there is some $f\in A$ with $f(\omega_0)=1=\norm{f}$ and $\sup_{\omega\in \Omega\setminus U}|f(\omega)|<1$.

\begin{lemma}[\mbox{\cite[Lemma 3.10]{ChoiJungTag}}]\label{lemma:engenurysohn}
Let $\Omega$ be a regular Hausdorff topological space and let $A$ be a complex closed subalgebra of $C_b(\Omega)$. If $\omega_0\in \SB(A)$, then for every open subset $U\subseteq \Omega$ containing $\omega_0$ and $\varepsilon>0$, there exists $\phi\in A$ such that $\phi(\omega_0)=1=\norm{\phi}$, $\sup_{\omega\in \Omega\setminus U}|\phi(\omega)|<\varepsilon$ and $$|\phi(\omega)|+(1-\varepsilon)|1-\phi(\omega)|\leq 1, \quad \forall \omega\in \Omega.$$
\end{lemma}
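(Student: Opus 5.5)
The plan is to start from a peaking function for $\omega_0$, push its values outside $U$ close to $0$ by taking powers, and then compose it with a conformal map of the disc onto the region described by the required inequality. The preceding lemma (the adaptation of \cite[Lemma 2.2]{CGK}) is what keeps the composition inside $A$.

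\textbf{Step 1 (peaking function).} Fix an open $U\ni\omega_0$ and $\varepsilon\in(0,1)$. Since $\omega_0\in\SB(A)$, there is $f\in A$ with $f(\omega_0)=1=\norm{f}$ and $r:=\sup_{\omega\in\Omega\setminus U}|f(\omega)|<1$. For every $n\in\N$, the power $f^n$ lies in $A$ and satisfies $f^n(\omega_0)=1=\norm{f^n}$ and $\sup_{\Omega\setminus U}|f^n|\leq r^n\to 0$. This step does not require $A$ to be unital.

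\textbf{Step 2 (the target region).} Consider
\[
S_\varepsilon:=\{z\in\C\colon |z|+(1-\varepsilon)|1-z|\leq 1\}.
\]
This set is compact and convex, being a sublevel set of a convex function. It is contained in the closed unit disc $\overline{\mathbb D}$. It contains $1$, and it contains $0$ in its interior, because at $z=0$ the left-hand side equals $1-\varepsilon<1$. Its boundary is therefore a Jordan curve and its interior $D_\varepsilon$ is a Jordan domain.
\begin{itemize}
\item By the Riemann mapping theorem there is a conformal $\psi\colon\mathbb D\to D_\varepsilon$ with $\psi(0)=0$.
\item By Carathéodory's theorem, $\psi$ extends to a homeomorphism $\overline{\mathbb D}\to S_\varepsilon$.
\item Composing with a rotation of $\mathbb D$, which fixes $0$, we may also arrange $\psi(1)=1$.
\end{itemize}

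\textbf{Step 3 (polynomial approximation).} The extended $\psi$ is continuous on $\overline{\mathbb D}$ and holomorphic inside, so Mergelyan's theorem (or simply the uniform convergence of the Fejér means of its Taylor series) makes it a uniform limit of polynomials $p_k$ on $M:=\overline{\mathbb D}$.
\begin{itemize}
\item Replacing $p_k$ by $p_k-p_k(0)$ preserves uniform convergence, because $p_k(0)\to\psi(0)=0$.
\item Since $\psi(0)=0$ and $0\in M$, part (b) of the preceding lemma applies in the non-unital case, and part (a) applies in the unital case.
\end{itemize}
Either way, $\psi\circ g\in A$ whenever $g\in A$ satisfies $g(\Omega)\subseteq\overline{\mathbb D}$.

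\textbf{Step 4 (choice of $n$ and verification).} By continuity of $\psi$ at $0$, pick $\delta>0$ with $|\psi(z)|<\varepsilon$ whenever $|z|<\delta$. Then choose $n$ with $r^n<\delta$ and set $\phi:=\psi\circ f^n\in A$. We check the three requirements.
\begin{itemize}
\item $\phi(\omega_0)=\psi(1)=1$, and $\norm{\phi}\leq 1$ because $S_\varepsilon\subseteq\overline{\mathbb D}$; hence $\norm{\phi}=1$.
\item For $\omega\notin U$ we have $|f^n(\omega)|\leq r^n<\delta$, so $|\phi(\omega)|<\varepsilon$. Taking the supremum gives only $\leq\varepsilon$, so to obtain the strict inequality we run the argument with a slightly smaller $\varepsilon$ in the choice of $\delta$.
\item Since $\phi(\omega)\in S_\varepsilon$ for every $\omega$, the inequality $|\phi(\omega)|+(1-\varepsilon)|1-\phi(\omega)|\leq 1$ holds by the definition of $S_\varepsilon$.
\end{itemize}

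The main obstacle is Step 2–3: one needs boundary regularity of the Riemann map, so that $\psi$ extends continuously with $\psi(1)=1$, and uniform polynomial approximability on the closed disc. Both follow from $S_\varepsilon$ being a convex Jordan domain. If one prefers to avoid Carathéodory's theorem, I would first try the explicit convex-domain version of the argument used in \cite{ChoiJungTag}.
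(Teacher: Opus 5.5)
Your argument is correct and is essentially the standard proof behind the cited \cite[Lemma~3.10]{ChoiJungTag}; the paper quotes that lemma without proof and places the composition lemma just before it for exactly this purpose. You make a peaking function small off $U$ by taking powers, compose it with the Carath\'eodory extension of the Riemann map of $\mathbb D$ onto the interior of $S_\varepsilon$ normalised by $\psi(0)=0$ and $\psi(1)=1$, and use the composition lemma to stay inside $A$. As a minor simplification, Schwarz's lemma gives $|\psi(z)|\le|z|$ on $\overline{\mathbb D}$, so it suffices to take $r^n<\varepsilon$ and no adjustment is needed for the strict inequality; in your version strictness already follows from the compactness of $\{|z|\le r^n\}$.
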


We are now ready to obtain examples of vector-valued function algebras with the weak-DD2P. To this end, we first identify, in the next proposition, a class of elements in a function algebra which are super $\Delta$-points.

\begin{proposition}\label{prop:superDeltasubalgebraX}
    Let $X$ be a complex Banach space and let $A$ be a complex function algebra on a regular Hausdorff topological space $\Omega$. Let $f\in A(\Omega,X)$ with $\|f\|=1$. If there exist $\omega_0\in \SB(A)'$ such that $\|f(\omega_0)\|=1$, then $f$ is a super $\Delta$-point.
\end{proposition}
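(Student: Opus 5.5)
Given a relatively weakly open set $W\subseteq B_{A(\Omega,X)}$ containing $f$ and $\varepsilon>0$, the plan is to produce a net $(g_\alpha)$ in $B_{A(\Omega,X)}$ converging weakly to $f$ with $\|f-g_\alpha\|\to 2$; then some $g_\alpha\in W$ satisfies $\|f-g_\alpha\|>2-\varepsilon$. This is the scheme of Proposition~\ref{prop:denseC(K)}, with the Urysohn functions replaced by the peaking functions of Lemma~\ref{lemma:engenurysohn}.

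\textbf{Choice of points and neighbourhoods.} Put $x_0:=f(\omega_0)\in S_X$. Since $\omega_0\in \SB(A)'$, every neighbourhood of $\omega_0$ contains strong boundary points different from $\omega_0$. Fix an open neighbourhood $U$ of $\omega_0$ and $\delta>0$. By continuity of $f$ at $\omega_0$, shrinking $U$ we may assume $\|f(\omega)-x_0\|<\delta$ on $U$. Pick $\omega_U\in \SB(A)\cap U$ with $\omega_U\neq\omega_0$. By regularity and the Hausdorff property, there is an open $V_U\subseteq U$ with $\omega_U\in V_U$ and $\omega_0\notin \overline{V_U}$.

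\textbf{Construction of the competitor.} Apply Lemma~\ref{lemma:engenurysohn} at $\omega_U$ with the open set $V_U$ and tolerance $\delta$, obtaining $\phi_U\in A$. Define
\[
g_U:=(1-\phi_U)f-\phi_U x_0 \in A(\Omega,X),
\]
which belongs to the space by conditions (2) and (3). If $A$ is not unital, we instead write $g_U=f-\phi_U(f+x_0)$, which avoids the constant $1$.

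\textbf{Norm bounds.} For every $\omega$ we have
\[
\|g_U(\omega)\|\le |1-\phi_U(\omega)|\,\|f(\omega)\|+|\phi_U(\omega)|\le |1-\phi_U(\omega)|+|\phi_U(\omega)|.
\]
The inequality $|\phi_U|+(1-\delta)|1-\phi_U|\le 1$ from Lemma~\ref{lemma:engenurysohn} then gives $\|g_U\|\le 1+\delta\,|1-\phi_U(\omega)|\le 1+2\delta$, so $g_U/(1+2\delta)\in B_{A(\Omega,X)}$. At the point $\omega_U$ we get $g_U(\omega_U)=-x_0$ and $f(\omega_U)\approx x_0$, hence $\|f-g_U\|\ge \|f(\omega_U)+x_0\|>2-\delta$.

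\textbf{Weak convergence (main obstacle).} Index by pairs $(U,\delta)$, directed by shrinking $U$ and decreasing $\delta$. We need $g_U-f=-\phi_U(f+x_0)\to 0$ weakly. The functions $\phi_U$ are uniformly bounded and converge pointwise to $0$ on $\Omega$: for a fixed $\omega\neq\omega_0$, eventually $U\not\ni\omega$, so $|\phi_U(\omega)|<\delta$. At $\omega_0$ itself we have $|\phi_U(\omega_0)|<\delta$, since $\omega_0\notin V_U$. Pointwise convergence on $\Omega$ is not enough for weak convergence when $\Omega$ is not compact, and for nets Rainwater's theorem does not apply directly.

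I would first try to pass to a sequence. Choose inductively distinct points $\omega_n\in\SB(A)$ with pairwise disjoint neighbourhoods $V_n$ shrinking toward $\omega_0$, and take $\phi_n$ supported up to $\delta_n$ in $V_n$, with $\sum_n\delta_n<\infty$. Then $(\phi_n)$ behaves like the $c_0$ basis: for any scalars, $\|\sum_n\lambda_n\phi_n\|\le \max_n|\lambda_n|\,(1+\sum_n\delta_n)$, so the argument of Proposition~\ref{prop:C0(L,X)} gives $\phi_n h\rightharpoonup 0$ for every bounded $h$, and in particular for $h=f+x_0$. The map $c_0\to A(\Omega,X)$ sending $e_n\mapsto \phi_n(f+x_0)$ is bounded, hence weak-to-weak continuous, exactly as in the proof of Theorem~\ref{thm:sufficient-for-ell1-norming}. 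Using a sequence with $\delta_n\to0$ also gives $\|f-g_n\|\to 2$ and $\|g_n\|\to 1$. Normalising and letting $n$ large, $W$ contains an element at distance greater than $2-\varepsilon$ from $f$, which shows that $f$ is a super $\Delta$-point.
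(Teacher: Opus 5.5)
Your final sequence argument is correct and is essentially the paper's proof. It uses the same ingredients: distinct strong boundary points $\omega_n$ with pairwise disjoint open sets $V_n$ placed where $\|f-x_0\|$ is small, peaking functions $\phi_n$ from Lemma~\ref{lemma:engenurysohn} with summable tolerances, weak nullness of $\phi_n(f+x_0)$ via a bounded operator from $c_0$, and evaluation at $\omega_n$ to get $\|f-g_n\|\geq 2-\delta_n$. The differences are cosmetic: the paper keeps $f_n=(1-2^{-n})(1-\phi_n)f-\phi_n x_0$ in the unit ball through the factor $(1-2^{-n})$ rather than rescaling by $1+2\delta_n$, and it goes straight to the sequence, so your preliminary discussion of nets is not needed.
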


\begin{proof}
Let $W\subseteq B_{A(\Omega,X)}$ be a weakly open set containing $f$, and let $\varepsilon>0$. Write $x_0=f(\omega_0)\in S_X$. We shall find an element $g\in W$ such that $\|f-g\|\geq 2-\varepsilon$.

For each $n\in\mathbb{N}$, define the set
\[
U_n=\left\{\omega\in\Omega \colon \|f(\omega)-x_0\|<\frac{1}{n}\right\},
\]
which is open in $\Omega$ and contains $\omega_0$. Since $\omega_0\in \SB(A)'$ and $\Omega$ is Hausdorff, there exist a sequence $(\omega_n)_n\subseteq \SB(A)$ with $\omega_n\neq \omega_m$ for $n\neq m$, and a sequence $(V_n)_n$ of pairwise disjoint open subsets of $\Omega$ such that
\[
\omega_n\in V_n\subseteq U_n \qquad \text{for all } n\in\mathbb{N}.
\]
By Lemma~\ref{lemma:engenurysohn}, for each $n\in\mathbb{N}$ there exists a function $\phi_n\in A$ such that $\phi_n(\omega_n)=1=\|\phi_n\|$,
\[
\sup_{\omega\in\Omega\setminus V_n} |\phi_n(\omega)|<2^{-n},
\]
and
\begin{equation}\label{Eq4}
|\phi_n(\omega)|+\left(1-2^{-n}\right)|1-\phi_n(\omega)|\leq 1,
\qquad \forall\,\omega\in\Omega.
\end{equation}

We claim that the sequence $(\phi_n)_n$ converges weakly to $0$. To see this, define a linear operator $T\colon c_0\to A$ by $T(e_n)=\phi_n$ for all $n\in\mathbb{N}$, where $(e_n)_n$ denotes the canonical basis of $c_0$. We show that $T$ is bounded. Let $\lambda_1,\ldots,\lambda_n\in\K$. Clearly,
\[
T\Bigl(\sum_{i=1}^n \lambda_i e_i\Bigr)=\sum_{i=1}^n \lambda_i \phi_i.
\]
If $\omega\notin \bigcup_{i=1}^n V_i$, then
\[
\Bigl|\sum_{i=1}^n \lambda_i \phi_i(\omega)\Bigr|
\leq \sum_{i=1}^n |\lambda_i|\,|\phi_i(\omega)|
\leq \sum_{i=1}^n |\lambda_i|2^{-i}
\leq \max_{1\leq i\leq n} |\lambda_i|.
\]
On the other hand, if $\omega\in \bigcup_{i=1}^n V_i$, there exists a unique index $i_0\in\{1,\ldots,n\}$ such that $\omega\in V_{i_0}$, and hence
\begin{align*}
\Bigl|\sum_{i=1}^n \lambda_i \phi_i(\omega)\Bigr|
&\leq |\lambda_{i_0}|+\sum_{\substack{i=1\\ i\neq i_0}}^n |\lambda_i|\,|\phi_i(\omega)| \\
&\leq |\lambda_{i_0}|+\sum_{\substack{i=1\\ i\neq i_0}}^n |\lambda_i|2^{-i}
\leq 2\max_{1\leq i\leq n} |\lambda_i|.
\end{align*}
Therefore,
\[
\Bigl\|\sum_{i=1}^n \lambda_i \phi_i\Bigr\|
\leq 2 \Bigl\|\sum_{i=1}^n \lambda_i e_i\Bigr\|,
\]
and hence $T$ is bounded. Consequently, $T$ is weak-to-weak continuous, and thus $(\phi_n)_n=(T(e_n))_n$ converges weakly to $0$. By an analogous argument, the sequences $(\phi_n x_0)_n$ and $(\phi_n f)_n$ in $A(\Omega,X)$ also converge weakly to $0$.

Next, define
\[
f_n:=\left(1-2^{-n}\right)(1-\phi_n)f-\phi_n x_0,
\qquad n\in\mathbb{N}.
\]
It follows from \eqref{Eq4} that $(f_n)_n\subseteq B_{A(\Omega,X)}$. Moreover, $(f_n)_n$ converges weakly to $f$. Hence, there exists $n_0\in\mathbb{N}$ such that $f_n\in W$ for all $n\geq n_0$. Choose $n_1\geq n_0$ with $\frac{1}{n_1}<\varepsilon$, and set $g=f_{n_1}\in W$. Then
\begin{align*}
\|f-g\|
&\geq \|f(\omega_{n_1})-g(\omega_{n_1})\|
= \|f(\omega_{n_1})+x_0\| \\
&\geq 2-\|f(\omega_{n_1})-x_0\|
\geq 2-\frac{1}{n_1}
\geq 2-\varepsilon,
\end{align*}
since $\omega_{n_1}\in V_{n_1}\subseteq U_{n_1}$.
\end{proof}

We now present the main theorem of this section.

\begin{theorem}\label{thm:DenseSubalgebrasX}
    Let $X$ be a complex Banach space and $A$ be a complex function algebra on a regular Hausdorff topological space $\Omega$. If the set $\SB(A)'\cap \SB(A)$ is infinite, then $A(\Omega,X)$ has the weak-DD2P.
\end{theorem}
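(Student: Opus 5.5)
We follow the scheme of Propositions~\ref{prop:denseC(K)} and~\ref{prop:C0(L,X)}, replacing Urysohn functions by the peaking functions of Lemma~\ref{lemma:engenurysohn}. Fix $f\in B_{A(\Omega,X)}$. We will produce a sequence of norm-one functions in $A(\Omega,X)$ that converges weakly to $f$ and that satisfies the hypothesis of Proposition~\ref{prop:superDeltasubalgebraX}. Since $\SB(A)'\cap\SB(A)$ is infinite, we choose distinct points $\omega_n\in \SB(A)'\cap\SB(A)$. Because $\Omega$ is Hausdorff and regular, passing to a subsequence if necessary, there are pairwise disjoint open sets $V_n\ni\omega_n$. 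This uses the standard fact that an infinite Hausdorff space contains an infinite sequence of points with pairwise disjoint neighbourhoods. We also fix $x_0\in S_X$.

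For each $n$, Lemma~\ref{lemma:engenurysohn} (which applies because $\omega_n\in\SB(A)$) gives $\phi_n\in A$ with $\phi_n(\omega_n)=1=\|\phi_n\|$, $\sup_{\Omega\setminus V_n}|\phi_n|<2^{-n}$, and
\[
|\phi_n(\omega)|+(1-2^{-n})|1-\phi_n(\omega)|\leq 1 \quad\text{for all } \omega\in\Omega.
\]
We define
\[
f_n:=(1-2^{-n})(1-\phi_n)f+\phi_n x_0 .
\]
This function lies in $A(\Omega,X)$ by conditions (2) and (3) in the definition of $A(\Omega,X)$, which cover $\phi_n x_0$, $\phi_n f$ and the constant multiple of $f$; the term $(1-\phi_n)f$ is read as $f-\phi_n f$, so no unit in $A$ is needed. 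The displayed inequality gives $\|f_n(\omega)\|\leq (1-2^{-n})|1-\phi_n(\omega)|+|\phi_n(\omega)|\leq 1$, and $f_n(\omega_n)=x_0$, so $\|f_n\|=1=\|f_n(\omega_n)\|$. Since $\omega_n\in\SB(A)'$, Proposition~\ref{prop:superDeltasubalgebraX} shows that each $f_n$ is a super $\Delta$-point.

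It remains to show that $f_n\rightharpoonup f$. Write
\[
f_n-f=-2^{-n}f-(1-2^{-n})\phi_n f+\phi_n x_0 .
\]
The first term tends to $0$ in norm. For the other two, we reuse the $c_0$-operator argument from the proof of Proposition~\ref{prop:superDeltasubalgebraX}. The disjointness of the $V_n$ together with the bounds $2^{-n}$ outside $V_n$ gives $\|\sum\lambda_i\phi_i g\|\leq 2\|g\|\max|\lambda_i|$ for fixed $g\in\{f,x_0\}$. Hence $e_n\mapsto \phi_n g$ extends to a bounded operator from $c_0$, and $(\phi_n g)_n$ is weakly null. Therefore every weakly open neighbourhood of $f$ in $B_{A(\Omega,X)}$ contains some $f_n$, which is a super $\Delta$-point. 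This gives the weak-DD2P.

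The only delicate point is the selection of the points $\omega_n$ with pairwise disjoint neighbourhoods inside $\SB(A)'\cap\SB(A)$. It is handled exactly as in the proof of Proposition~\ref{prop:superDeltasubalgebraX}, by inductively separating points in the Hausdorff space $\Omega$. Everything else is a direct combination of results already stated.
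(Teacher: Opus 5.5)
Your proof is correct and is essentially the paper's argument: the same peaking functions $\phi_n$ from Lemma~\ref{lemma:engenurysohn} on pairwise disjoint neighbourhoods of distinct points of $\SB(A)'\cap\SB(A)$, the same functions $f_n=(1-2^{-n})(1-\phi_n)f+\phi_n x_0$, weak nullity of $(\phi_n g)_n$ via the $c_0$-operator bound, and Proposition~\ref{prop:superDeltasubalgebraX} applied at $\omega_n$. Your extra remarks make explicit two details the paper leaves implicit: reading $(1-\phi_n)f$ as $f-\phi_n f$ so that no unit in $A$ is needed, and using regularity to pick points with pairwise disjoint neighbourhoods rather than claiming this for an arbitrary sequence.
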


\begin{proof}
Let $f\in B_{A(\Omega,X)}$. Choose a sequence $(\omega_n)_n\subseteq \SB(A)'\cap \SB(A)$ such that $\omega_n\neq \omega_m$ for all $n\neq m$. Since $\Omega$ is Hausdorff, there exists a sequence $(U_n)_n$ of pairwise disjoint open subsets of $\Omega$ such that $\omega_n\in U_n$ for all $n\in\mathbb{N}$. 

By Lemma~\ref{lemma:engenurysohn}, for each $n\in\mathbb{N}$ there exists a function $\phi_n\in A$ such that $\phi_n(\omega_n)=1=\|\phi_n\|$,
\[
\sup_{\omega\in \Omega\setminus U_n} |\phi_n(\omega)|<2^{-n},
\]
and
\begin{equation}\label{Eq5}
|\phi_n(\omega)|+\left(1-2^{-n}\right)|1-\phi_n(\omega)|\leq 1,
\qquad \forall\,\omega\in \Omega.
\end{equation}
As in the proof of Proposition~\ref{prop:superDeltasubalgebraX}, it follows that the sequence $(\phi_n)_n$ converges weakly to $0$. Fix $x_0\in S_X$ and define, for each $n\in\mathbb{N}$,
\[
f_n:=\left(1-2^{-n}\right)(1-\phi_n)f+\phi_n x_0.
\]
It follows from \eqref{Eq5} that $(f_n)_n\subseteq B_{A(\Omega,X)}$. Moreover, $(f_n)_n$ converges weakly to $f$. Finally, for every $n\in\mathbb{N}$ we have $f_n(\omega_n)=x_0$, and hence each $f_n$ is a super $\Delta$-point by Proposition~\ref{prop:superDeltasubalgebraX}.
\end{proof}

In the case that $A$ is a function algebra in $C_0(L)$, where $L$ is locally compact and Hausdorff, the set of strong boundary points coincide with the Choquet boundary $\partial A$ (see \cite[Theorem 2.1]{RaoRoy}). Hence, Theorem \ref{thm:DenseSubalgebrasX} yield the following. 

\begin{corollary}\label{cor:C0(L)}
Let $X$ be a complex Banach space and let $A$ be a complex function algebra in $C_0(L)$, where $L$ is a locally compact Hausdorff topological space. If the set $(\partial A)'$ is infinite, then $A(L,X)$ has the weak-DD2P.
\end{corollary}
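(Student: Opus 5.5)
The plan is to deduce the corollary directly from Theorem~\ref{thm:DenseSubalgebrasX}, applied with $\Omega=L$. A locally compact Hausdorff space is completely regular, hence regular Hausdorff, so $A$ is a complex function algebra on a regular Hausdorff space in the sense of Subsection~\ref{subsection:functionalgebras}. It is a closed subalgebra of $C_0(L)\subseteq C_b(L)$ that separates the points of $L$. The whole task is therefore to check that the hypothesis ``$(\partial A)'$ infinite'' yields ``$\SB(A)'\cap\SB(A)$ infinite''.

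The key input is \cite[Theorem~2.1]{RaoRoy}, which gives $\SB(A)=\partial A$ for function algebras in $C_0(L)$. With this identification the set required by Theorem~\ref{thm:DenseSubalgebrasX} becomes $(\partial A)'\cap\partial A$. Here the cluster points should be read inside the topological space $\partial A$ with the induced topology, exactly as was done for $E_X$ and for $(\partial X)'$ in Corollary~\ref{cor:charweakdd2p-unitaluniformalgebras}. Under that reading we have $(\partial A)'\subseteq\partial A$, so $(\partial A)'\cap\partial A=(\partial A)'$, which is infinite by assumption.

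To meet the theorem's convention, where $\SB(A)'$ means cluster points taken in $\Omega$, I would note that a point of $\partial A$ which is a cluster point of $\partial A$ in the relative topology is also a cluster point of $\partial A$ in $L$. Hence $(\partial A)'_{\text{rel}}\subseteq \SB(A)'\cap\SB(A)$, so the latter set is infinite.

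Theorem~\ref{thm:DenseSubalgebrasX} then gives that $A(L,X)$ has the weak-DD2P. The only delicate point I expect is this bookkeeping about where the cluster points live. If $(\partial A)'$ were instead meant as cluster points in $L$, an infinite set of them lying outside $\partial A$ would not suffice for the theorem as stated. So I would fix the relative-topology convention explicitly, in line with the earlier corollaries, and otherwise the argument is immediate.
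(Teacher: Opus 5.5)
Your proposal is correct and follows the paper's route exactly: identify $\SB(A)=\partial A$ via \cite[Theorem~2.1]{RaoRoy}, then apply Theorem~\ref{thm:DenseSubalgebrasX} with $\Omega=L$. Your remark that $(\partial A)'$ must be read in the relative topology of $\partial A$, so that it coincides with $\SB(A)'\cap\SB(A)$, makes explicit a convention the paper only leaves implicit (as it did for $E_X$ and $(\partial X)'$).
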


\section{Some stability results}\label{section:stability}

In this section, we study the stability of the weak-DD2P under several standard constructions in Banach space theory. More precisely, we analyse whether this property is preserved by common operations such as absolute sums, the formation of Köthe--Bochner spaces, and tensor product constructions. These results provide further insight into the structural behaviour of the weak-DD2P and yield new examples of Banach spaces enjoying this property.

\subsection{Absolute sums}\label{subsection:absolutesums}

In this subsection, we analyse the behaviour of the weak-DD2P under absolute sums of Banach spaces. Recall that an \emph{absolute normalised norm} $N$ on $\R^2$ is a norm satisfying the following conditions:
\begin{enumerate}
    \item $N(a,b)=N(|a|,|b|)$ for all $(a,b)\in \R^2$;
    \item $N(1,0)=N(0,1)=1$.
\end{enumerate}
Given Banach spaces $X$ and $Y$, and an absolute normalised norm $N$ on $\R^2$, we denote by $X\oplus_N Y$ the product space $X\times Y$ endowed with the norm
\[
\|(x,y)\|_N := N(\|x\|,\|y\|), \qquad (x,y)\in X\times Y,
\]
and refer to $X\oplus_N Y$ as an \emph{absolute sum} of $X$ and $Y$. The most prominent examples of absolute sums are the $\ell_p$-sums, which arise when $N$ is the $\ell_p$-norm for $1\leq p\leq\infty$; in this case, we write $X\oplus_p Y$.

We begin with the following result, which shows that the weak-DD2P is preserved under absolute sums of Banach spaces.

 \begin{proposition}\label{prop:sumabsosuben}
Let $X$ and $Y$ be Banach spaces with the weak-DD2P, and let $N$ be an absolute normalised norm on $\R^2$. Then the absolute sum $X\oplus_N Y$ has the weak-DD2P.
\end{proposition}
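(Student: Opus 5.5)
Given $(x,y)\in B_{X\oplus_N Y}$, we will produce a net of super $\Delta$-points converging weakly to it. The only external ingredient is the known behaviour of super $\Delta$-points in absolute sums: if $u\in S_X$ and $v\in S_Y$ are super $\Delta$-points and $(a,b)\in S_{(\R^2,N)}$ with $a,b\ge 0$, then $(au,bv)$ is a super $\Delta$-point of $X\oplus_N Y$ (see e.g.\ \cite{MPR} or \cite{almt21}). We plan to invoke this, and to reprove it if the exact form is not available.

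\emph{Reduction to the sphere.} First suppose $\|(x,y)\|_N=1$, with $x\neq 0$ and $y\neq 0$; set $a=\|x\|$ and $b=\|y\|$. The weak-DD2P of $X$ gives a net $(u_\alpha)$ of super $\Delta$-points of $X$ converging weakly to $x/a$ (note that $x/a\in S_X\subseteq B_X$). Likewise, the weak-DD2P of $Y$ gives a net $(v_\beta)$ of super $\Delta$-points converging weakly to $y/b$. Indexing by the product directed set, $(au_\alpha,bv_\beta)\to(x,y)$ weakly, and each term is a super $\Delta$-point of norm $N(a,b)=1$. If instead $y=0$, we will use $(u_\alpha,0)$ when $(a,0)$ is a valid coefficient pair, since $(u_\alpha,0)$ is still a super $\Delta$-point by the same fact with $b=0$.

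\emph{Interior points.} In finite codimension, the unit sphere is weakly dense in the ball, since every weak neighbourhood of an interior point contains a line segment reaching the sphere. So for a general $(x,y)$ in the ball, each weak neighbourhood $W$ of it contains some $(x',y')\in W\cap S_{X\oplus_N Y}$. Concretely, perturb along directions in the intersection of finitely many kernels; this uses that $X\oplus_N Y$ is infinite-dimensional, which holds because the weak-DD2P forces diameter two.

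We then apply the previous step to $(x',y')$, which is the point whose weak neighbourhood $W$ now needs to be reached. The degenerate case where $x'$ or $y'$ vanishes can be avoided by a small perturbation, choosing $(x',y')$ with both coordinates nonzero. This is possible because $W$ is open and the condition defines a dense set. Consequently $W$ contains a super $\Delta$-point.

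\emph{Main obstacle.} The main risk lies in the super $\Delta$-point fact for absolute sums, specifically when $N$ is not strictly the $\ell_\infty$ norm and the coefficients $(a,b)$ lie on a face of the $N$-ball. If that lemma is only available for $a,b>0$, the perturbation step above handles it. If it must be proved, we would take a weak neighbourhood of $(au,bv)$ and shrink it to a product $W_1\times W_2$ intersected with a norm condition. We would then pick $u'\in W_1$ with $\|u-u'\|\approx 2$ and $v'\in W_2$ with $\|v-v'\|\approx 2$, and choose $(au',bv')$, which satisfies
\[
\|(au,bv)-(au',bv')\|_N\approx N(2a,2b)=2.
\]
The delicate point is ensuring $(au',bv')$ stays in the ball. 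This holds since $\|u'\|,\|v'\|\le 1$ and $N$ is monotone on the positive cone.
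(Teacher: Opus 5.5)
Your proposal is correct and follows essentially the same route as the paper. Both reduce to norm-one points via weak density of the sphere (the factors are infinite-dimensional) and then use the product nets $(\|x\|u_\alpha,\|y\|v_\beta)$, or $(u_\alpha,0)$ when $y=0$, together with the absolute-sum stability of super $\Delta$-points, which the paper cites from \cite[Proposition~3.26]{MPR}. Your self-contained sketch of that stability lemma via monotonicity of $N$ on the positive cone is sound for all $a,b\geq 0$ with $N(a,b)=1$, so the worry about faces of the $N$-ball is unfounded. The extra perturbation making both coordinates nonzero is unnecessary, since you already handle $y=0$ directly.
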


\begin{proof}
Let $(x,y)\in B_{X\oplus_N Y}$. Since $\overline{S_{X\oplus_N Y}}^{\omega}=B_{X\oplus_N Y}$ (as $X$ and $Y$ are necessarily infinite-dimensional), we may assume without loss of generality that $\|(x,y)\|_N=1$.

First, consider the case where $x\neq 0$ and $y\neq 0$. Then $\frac{x}{\|x\|}\in S_X$ and $\frac{y}{\|y\|}\in S_Y$. Since both $X$ and $Y$ have the weak-DD2P, there exist nets of super $\Delta$-points $(x_\alpha)_\alpha\subseteq S_X$ and $(y_\beta)_\beta\subseteq S_Y$ converging weakly to $\frac{x}{\|x\|}$ and $\frac{y}{\|y\|}$, respectively. By \cite[Proposition~3.26(1)]{MPR}, every element of the net
\[
(\|x\|x_\alpha,\|y\|y_\beta)_{(\alpha,\beta)}
\]
is a super $\Delta$-point in $X\oplus_N Y$. Since this net converges weakly to $(x,y)$, the desired conclusion follows.

Now consider the case $y=0$ (the case $x=0$ being analogous), which implies $\|x\|=1$. Let $(x_\alpha)_\alpha\subseteq S_X$ be a net of super $\Delta$-points converging weakly to $x$. By \cite[Proposition~3.26(2)]{MPR}, the net $(x_\alpha,0)_\alpha$ consists of super $\Delta$-points in $X\oplus_N Y$ and converges weakly to $(x,0)$.
\end{proof}

We now turn to the special case of the $\ell_\infty$-sum. More precisely, we show that if a Banach space $X$ has the weak-DD2P, then the space $X\oplus_\infty Y$ also has the weak-DD2P for every Banach space $Y$. This result is natural, as it is known that an analogous statement holds for the D2P (see \cite[Lemma~2.1]{lop06}) and for the DD2P \cite[Theorem 2.12]{blr18}. To establish this result, we first require the following auxiliary lemma, whose proof is included for the sake of completeness.

\begin{lemma}\label{lema:superDeltasumainfinito}
Let $X$ and $Y$ be Banach spaces, and let $x\in S_X$ be a super $\Delta$-point. Then, for every $y\in B_Y$, the element $(x,y)$ is a super $\Delta$-point in $X\oplus_\infty Y$.
\end{lemma}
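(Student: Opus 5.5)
We argue directly from the definition of super $\Delta$-point, using that weak neighbourhoods in $X\oplus_\infty Y$ factor through the coordinates. Fix $y\in B_Y$ and note $\|(x,y)\|_\infty=\max\{\|x\|,\|y\|\}=1$, so $(x,y)\in S_{X\oplus_\infty Y}$. Let $W$ be a relatively weakly open subset of $B_{X\oplus_\infty Y}$ containing $(x,y)$, and let $\varepsilon>0$. Since $(X\oplus_\infty Y)^*=X^*\oplus_1 Y^*$, every functional is a pair $(x^*,y^*)$ acting by $x^*(u)+y^*(v)$. Hence we may shrink $W$ to a basic neighbourhood of the form
\[
\bigl\{(u,v)\in B_{X\oplus_\infty Y}\colon |x_i^*(u)-x_i^*(x)|<\delta,\ |y_i^*(v)-y_i^*(y)|<\delta,\ 1\leq i\leq n\bigr\}
\]
for some $x_i^*\in X^*$, $y_i^*\in Y^*$ and $\delta>0$. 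This uses that $|(x_i^*,y_i^*)(u-x,v-y)|\le |x_i^*(u-x)|+|y_i^*(v-y)|$ and rescales $\delta$ by $2$.

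Since $B_{X\oplus_\infty Y}=B_X\times B_Y$, the basic neighbourhood above is exactly $U\times V$ with
\[
U=\{u\in B_X\colon |x_i^*(u-x)|<\delta\ \forall i\},\qquad V=\{v\in B_Y\colon |y_i^*(v-y)|<\delta\ \forall i\}.
\]
Here $U$ is a relatively weakly open subset of $B_X$ containing $x$, and $V\ni y$. Because $x$ is a super $\Delta$-point of $X$, there is $u\in U$ with $\|x-u\|>2-\varepsilon$. Then $(u,y)\in U\times V\subseteq W$ and
\[
\|(x,y)-(u,y)\|_\infty=\max\{\|x-u\|,0\}>2-\varepsilon,
\]
which proves that $(x,y)$ is a super $\Delta$-point.

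The only step needing care is the reduction to a product neighbourhood, namely that a relatively weakly open set around $(x,y)$ contains a set $U\times V$ of this form. This follows from the description of the dual of an $\ell_\infty$-sum as an $\ell_1$-sum and the splitting of each functional into its two coordinates, as sketched above; the rest is immediate.
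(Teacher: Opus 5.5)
Your proof is correct and uses the same idea as the paper: keep the second coordinate equal to $y$, perturb only the first coordinate using that $x$ is a super $\Delta$-point, and note that the $\ell_\infty$-distance then reduces to $\|x-u\|$. The paper phrases this with nets instead of your product neighbourhood $U\times V$: it takes $(x_\alpha)_\alpha\subseteq\{z\in B_X\colon \|x-z\|>2-\varepsilon\}$ with $x_\alpha\to x$ weakly, and observes that $(x_\alpha,y)\to(x,y)$ weakly with $\|(x_\alpha,y)-(x,y)\|_\infty=\|x_\alpha-x\|$.
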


\begin{proof}
    Fix $\varepsilon>0$ and take a net $(x_\alpha)_\alpha\subseteq \Delta_\varepsilon(x)$ weakly convergent to $x$. Then, for every $\alpha$ we have \begin{align*}
        \norm{(x_\alpha,y)-(x,y)}_\infty = \norm{x_\alpha-x}\geq 2-\varepsilon,
    \end{align*}
    so $(x_\alpha,y)\in \Delta_\varepsilon(x,y)$. As $(x_\alpha,y)_\alpha$ converges weakly to $(x,y)$, we conclude that $(x,y)\in \overline{\Delta_\varepsilon(x,y)}^\omega$, as desired.
\end{proof}

Now we can get the desired result.

\begin{proposition}\label{prop:densesuminfty}
    Let $X,Y$ be Banach spaces. If $X$ has the weak-DD2P, then $X\oplus_\infty Y$ also has the weak-DD2P.
\end{proposition}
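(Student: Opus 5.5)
The plan is to combine the weak-DD2P of $X$ with Lemma~\ref{lema:superDeltasumainfinito}. Fix $(x,y)\in B_{X\oplus_\infty Y}$. Then $\|x\|\leq 1$ and $\|y\|\leq 1$, so $x\in B_X$ and $y\in B_Y$. Since $X$ has the weak-DD2P, the set of super $\Delta$-points of $X$ is weakly dense in $B_X$. We therefore pick a net $(x_\alpha)_\alpha\subseteq S_X$ of super $\Delta$-points with $x_\alpha\rightharpoonup x$.

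Next we fix the second coordinate and consider the net $(x_\alpha,y)_\alpha$. By Lemma~\ref{lema:superDeltasumainfinito}, each $(x_\alpha,y)$ is a super $\Delta$-point of $X\oplus_\infty Y$. Here we use that $\|(x_\alpha,y)\|_\infty=\max\{1,\|y\|\}=1$, so the element lies in the unit sphere.

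It remains to check that $(x_\alpha,y)\rightharpoonup(x,y)$. Since $(X\oplus_\infty Y)^*=X^*\oplus_1 Y^*$, every functional has the form $(x^*,y^*)$. Then
\[
(x^*,y^*)(x_\alpha,y)=x^*(x_\alpha)+y^*(y)\longrightarrow x^*(x)+y^*(y).
\]
Equivalently, the canonical injection $x\mapsto(x,0)$ is weak-to-weak continuous, and adding the constant $(0,y)$ preserves weak convergence.

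Hence every point of $B_{X\oplus_\infty Y}$ is a weak limit of super $\Delta$-points, which is exactly the weak-DD2P. There is no real obstacle: the only substantive ingredient is Lemma~\ref{lema:superDeltasumainfinito}, which is already available. The single point to verify is that the approximating net lies in the unit sphere, and this is automatic because $\|y\|\leq 1$.
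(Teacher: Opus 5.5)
Your proposal is correct and follows the paper's proof: take a net of super $\Delta$-points $(x_\alpha)_\alpha\subseteq S_X$ converging weakly to $x$, apply Lemma~\ref{lema:superDeltasumainfinito} to see that each $(x_\alpha,y)$ is a super $\Delta$-point of $X\oplus_\infty Y$, and observe that $(x_\alpha,y)_\alpha$ converges weakly to $(x,y)$. Your checks that $\|(x_\alpha,y)\|_\infty=1$ and that the weak convergence holds coordinatewise only make explicit what the paper calls clear.
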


\begin{proof}
Let $(x,y)\in B_{X\oplus_\infty Y}$. Since $X$ has the weak-DD2P, there exists a net of super $\Delta$-points $(x_\alpha)_\alpha\subseteq S_X$ converging weakly to $x\in B_X$. By Lemma~\ref{lema:superDeltasumainfinito}, it follows that the net $(x_\alpha,y)_\alpha$ consists of super $\Delta$-points in $X\oplus_\infty Y$. Moreover, it is clear that $(x_\alpha,y)_\alpha$ converges weakly to $(x,y)$. Therefore, $X\oplus_\infty Y$ has the weak-DD2P.
\end{proof}

Finally, we obtain results in the opposite direction to Proposition~\ref{prop:sumabsosuben}, namely, we identify the restrictions that the weak-DD2P on an absolute sum $X\oplus_N Y$ imposes on the factor spaces $X$ and $Y$. 

\begin{proposition}\label{prop:sumasabsolutasbaja}
Let $X$ and $Y$ be Banach spaces, and let $N$ be an absolute normalised norm on $\R^2$. Suppose that the absolute sum $X\oplus_N Y$ has the weak-DD2P.
\begin{itemize}
    \item[\upshape(a)] If $N$ is not the $\ell_\infty$-norm, then both $X$ and $Y$ have the weak-DD2P.
    \item[\upshape(b)] If $N$ is the $\ell_\infty$-norm, then at least one of the spaces $X$ or $Y$ has the weak-DD2P.
\end{itemize}
\end{proposition}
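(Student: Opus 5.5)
The plan is to use the known description of super $\Delta$-points in absolute sums, namely the converse parts of \cite[Proposition~3.26]{MPR}, and then pass weak density through the coordinate projections. Two facts are needed.

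\begin{itemize}
\item[(i)] If $N$ is not the $\ell_\infty$-norm and $(x,y)\in S_{X\oplus_N Y}$ is a super $\Delta$-point, then every non-zero coordinate, normalised, is a super $\Delta$-point. That is, $x\neq 0$ implies $x/\|x\|$ is a super $\Delta$-point of $X$, and likewise for $y$.
\item[(ii)] If $N=\ell_\infty$ and $(x,y)$ is a super $\Delta$-point, then either $\|x\|=1$ and $x$ is a super $\Delta$-point of $X$, or $\|y\|=1$ and $y$ is a super $\Delta$-point of $Y$.
\end{itemize}

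If these are not available verbatim, I would prove them directly. For (ii), suppose neither coordinate works. Then there are relatively weakly open sets $W_1\ni x$ in $B_X$ and $W_2\ni y$ in $B_Y$ with $\|x-u\|\leq 2-\delta$ on $W_1$ and $\|y-v\|\leq 2-\delta$ on $W_2$. If, say, $\|x\|<1$, use $W_1=\{u\in B_X:\|u\|\text{ arbitrary}\}$ and note that $\|x-u\|\leq 1+\|x\|<2$ trivially. Then $W_1\times W_2$ is weakly open in the $\ell_\infty$ ball and witnesses that $(x,y)$ is not a super $\Delta$-point. For (i), the analogous argument uses the strict convexity-type property of $N$ away from $\ell_\infty$, through the standard "$N(a,b)$ close to $2$ forces both coordinates to be diametral" estimate.

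\textbf{Part (a).} Fix $x\in B_X$. The reduction to $\|x\|=1$ is done by weak density of spheres, since $X$ is infinite-dimensional; this holds because $X\oplus_N Y$ has the D2P, so $X$ must have it too.

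Consider $(x,0)$, or better $(\lambda x,\mu y_0)$ with $N(\lambda,\mu)=1$, $\lambda,\mu>0$ and $y_0\in S_Y$. Take a net of super $\Delta$-points $(x_\alpha,y_\alpha)$ converging weakly to it. Then $x_\alpha\to\lambda x$ weakly, and for large $\alpha$ we have $x_\alpha\neq 0$, since the weak limit is non-zero. By (i), $x_\alpha/\|x_\alpha\|$ is a super $\Delta$-point.

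The obstacle is that the norms $\|x_\alpha\|$ need not converge, so the normalised net need not converge weakly to $x$. To handle this, pass to a subnet with $\|x_\alpha\|\to r$, where $r\geq\lambda\|x\|>0$ by weak lower semicontinuity. Then $x_\alpha/\|x_\alpha\|\to\lambda x/r$ weakly. This only gives density in the set of points of the form $sx$ with $s\leq 1$.

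I would fix this by noting that the set $D$ of super $\Delta$-points is invariant under unimodular scalars. Its weak closure then contains $\lambda x/r$ for every $x\in S_X$, with some $r$ depending on $x$. I would then argue that $\overline{D}^{w}$ contains a ball-like set, or use the sum-argument directly, choosing $\lambda$ close to $1$ so that $\lambda/r$ is near $1$. If $\lambda\to 1$ forces $r\to 1$, because $\|x_\alpha\|\leq 1$, then $\lambda x/r$ ranges over points arbitrarily close in norm to $x$, so $x\in\overline{D}^{w}$. This works since $r\in[\lambda,1]$. Points of $B_X$ then follow from points of $S_X$.

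\textbf{Part (b).} Argue by contradiction. Suppose neither $X$ nor $Y$ has the weak-DD2P. Then there are non-empty relatively weakly open sets $W_1\subseteq B_X$ and $W_2\subseteq B_Y$ containing no super $\Delta$-points. We may shrink them to lie in the open unit balls, using that norm-one super $\Delta$-points are weakly dense, if needed, or by intersecting with slices.

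Then $W_1\times W_2$ is a non-empty weakly open subset of $B_{X\oplus_\infty Y}$. By (ii), any super $\Delta$-point $(x,y)$ in it would have a coordinate that is a super $\Delta$-point lying in $W_1$ or $W_2$, which is a contradiction. The main obstacle throughout is establishing (i) and (ii), together with the normalisation issue handled above.
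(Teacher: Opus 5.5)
Your part (b) is essentially the paper's proof. Fact (ii) is exactly what Lemma~\ref{lema:superDeltatosummands} gives for $N=\ell_\infty$, your direct proof of it via $W_1\times W_2$ is correct, and the contradiction argument with $W_1\times W_2$ is the paper's. You should drop the remark about shrinking $W_1,W_2$ into the open unit balls: in infinite dimensions a non-empty relatively weakly open subset of the ball always meets the sphere, and nothing of that kind is needed.

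The gap is fact (i), on which your part (a) rests: it is false for general absolute norms $N\neq\ell_\infty$. Take $N(a,b)=\max\{|b|,|a|+\tfrac12|b|\}$, which is absolute and normalised with $N(1,1)=\tfrac32$. Let $y\in S_Y$ be a super $\Delta$-point (say $Y=C[0,1]$) and let $x\in S_X$ be arbitrary, e.g.\ $X=\R$. Then $(\tfrac14x,y)$ has norm one and is a super $\Delta$-point. Indeed, pick $v\in B_Y$ weakly close to $y$ with $\|y-v\|>2-\varepsilon$. The point $(\tfrac14x,v)$ lies in the ball since $N(\tfrac14,\|v\|)\leq 1$, it is weakly close to $(\tfrac14x,y)$, and its distance to it is $\|y-v\|$. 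Yet $x$ is not a super $\Delta$-point. So no estimate of the form ``$N$ close to $2$ forces both coordinates to be diametral'' can hold: where the unit sphere of $N$ is flat along $\{b=1\}$, all the diametrality may come from $Y$. The correct statement, which the paper imports from \cite{GVM} as Lemma~\ref{lema:superDeltatosummands}, needs the other coordinate to have norm different from one. Precisely: if $(ax,by)$ is a super $\Delta$-point with $x\in S_X$, $y\in S_Y$ and $b\neq 1$, then $x$ is a super $\Delta$-point. You must cite this or genuinely prove it; your sketch does not.

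With that lemma, part (a) goes through, and the missing ingredient is exactly the one the paper uses. Set $t:=\max\{s\in[0,1]\colon N(s,1)\leq 1\}$; then $t<1$ because $N\neq\ell_\infty$, and $\|x_\alpha\|>t$ forces $\|y_\alpha\|\neq 1$. The simplest route is to approximate $(x,0)$ with $x\in S_X$ directly. Since $\|x_\alpha\|\leq 1$ and the norm is weakly lower semicontinuous, $\|x_\alpha\|\to 1$. Hence eventually $\|x_\alpha\|>t$, and $x_\alpha/\|x_\alpha\|\to x$ weakly. The $\lambda\to 1$ detour is unnecessary, though it also works once $\lambda>t$. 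The appeal to the D2P is not needed either: the argument itself produces super $\Delta$-points in $X$, so $\dim X=\infty$ and $B_X=\overline{S_X}^{w}$. Once repaired, your argument is the direct form of the paper's proof. The paper argues by contradiction: it takes a relatively weakly open $U\subseteq B_X$ free of super $\Delta$-points, shrinks it to lie in $\{\|z\|>t\}$ and be stable under normalisation, and pulls it back to $X\oplus_N Y$ through the first-coordinate projection.
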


For the proof, we shall need the following lemma from \cite{GVM}.

\begin{lemma}\label{lema:superDeltatosummands}
    Let $X,Y$ be Banach spaces and let $N$ be an absolute normalised norm on $\R^2$. Given $x\in S_X$, $y\in S_Y$ and $a,b\geq 0$ with $N(a,b)=1$, suppose that $(ax,by)$ is a super $\Delta$-point in $X\oplus_N Y$.
    \begin{itemize}
        \item[\upshape(a)] If $b\neq1$, then $x$ is a super $\Delta$-point.
        \item[\upshape(b)] If $a\neq 1$, then $y$ is a super $\Delta$-point.
        \item[\upshape(c)] If $a=b=1$, then $x$ or $y$ is a super $\Delta$-point.
    \end{itemize}
\end{lemma}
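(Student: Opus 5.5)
The plan is to argue by contraposition in each case. Assuming the relevant summand point is \emph{not} a super $\Delta$-point, I will build a relatively weakly open neighbourhood of $(ax,by)$ in $B_{X\oplus_N Y}$ all of whose elements stay at distance at most $2-\delta'$ from $(ax,by)$, for some $\delta'>0$. That contradicts $(ax,by)$ being a super $\Delta$-point. Parts (a) and (b) are symmetric, so I only treat (a).

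For (a), I first check that $b\neq 1$ forces $a>0$: if $a=0$ then $N(0,b)=b=1$. Suppose $x$ is not a super $\Delta$-point. Then there are a relatively weakly open $W\ni x$ in $B_X$ and $\delta>0$ with $\|x-u\|\leq 2-\delta$ for every $u\in W$. Pick $x^*\in S_{X^*}$ with $x^*(x)=1$, and $y^*\in S_{Y^*}$ with $y^*(y)=1$. For small $\eta>0$ I take the set $V$ of those $(u,v)\in B_{X\oplus_N Y}$ satisfying three conditions: $\operatorname{Re}x^*(u)>a-\eta$, $\operatorname{Re}y^*(v)>b-\eta$, and $u$ lies in the weakly open set $aW'$. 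Here $W'$ is a weak neighbourhood of $x$ chosen (by shrinking $W$ and using finitely many functionals) so that $u\in aW'$ and $\big|\|u\|-a\big|$ small together force $u/\|u\|\in W$. Points of $V$ satisfy $\|u\|\geq a-\eta$ and $\|v\|\geq b-\eta$.

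The key step is an upper bound $\|u\|\leq a+o(1)$ as $\eta\to 0$, which comes from the shape of $N$. The map $s\mapsto N(s,b)$ is convex and nondecreasing on $[0,\infty)$. It satisfies $N(0,b)=b<1$ and $N(a,b)=1$, so it is strictly increasing on $[a,\infty)$. Together with $N(\|u\|,b-\eta)\leq 1$ and continuity of $N$, this gives $\|u\|\to a$ uniformly on $V$. Then $u/\|u\|\in W$, hence $\|u-ax\|\leq a(2-\delta)+\theta(\eta)$ with $\theta(\eta)\to 0$. Also $\|v-by\|\leq 2b+\eta'$. By monotonicity of $N$,
\[
\|(u,v)-(ax,by)\|_N\leq N\bigl(a(2-\delta)+\theta(\eta),\,2b+\eta'\bigr).
\]
By homogeneity and the same strict-monotonicity fact, $N(2a-a\delta,2b)<N(2a,2b)=2$. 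So for $\eta$ small the right-hand side is at most $2-\delta'$ for some $\delta'>0$, which gives the contradiction.

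For (c), with $a=b=1$, suppose neither $x$ nor $y$ is a super $\Delta$-point, with witnesses $W_X,\delta_X$ and $W_Y,\delta_Y$. Put $\delta=\min\{\delta_X,\delta_Y\}$. Use the neighbourhood $V=\{(u,v)\in B: u\in W_X,\ v\in W_Y\}$, which is relatively weakly open since $B\subseteq B_X\times B_Y$. Any $(u,v)\in V$ satisfies
\[
\|(u,v)-(x,y)\|_N\leq N(2-\delta,2-\delta)=(2-\delta)N(1,1)=2-\delta,
\]
using $N(1,1)=N(a,b)=1$. This is the required contradiction.

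I expect the main obstacle to be the strictness argument on $N$ in (a). The delicate parts are controlling $\|u\|$ from above when $N$ has flat pieces, and transferring weak closeness of $u$ to $ax$ into $u/\|u\|\in W$. The convexity observation that $N(\cdot,b)$ is strictly increasing past $a$ when $b<1$ is what I would rely on for both.
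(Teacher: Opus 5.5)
The paper does not prove this lemma: it is quoted from \cite{GVM}, so there is no in-paper argument to compare with, and I judge your proposal on its own. The contrapositive strategy is sound. Part (c) is correct as written, since it only uses $N(1,1)=1$. In (a), the choice of $V$, the normalisation $u/\|u\|\in W$, and the convexity argument forcing $\|u\|\to a$ uniformly on $V$ are all fine.

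The genuine gap is the estimate $\|v-by\|\leq 2b+\eta'$. You assert it without justification, and it is false in general. The only upper control on $\|v\|$ for $(u,v)\in V$ is $N(\|u\|,\|v\|)\leq 1$. If $a<1$, the mirror convexity argument for $t\mapsto N(a,t)$ (equal to $a<1$ at $t=0$ and to $1$ at $t=b$) would give $\|v\|\leq b+o(1)$. If $a=1$, however, $N(1,\cdot)$ may be constant well beyond $b$. For instance, take $N$ the $\ell_\infty$-norm and $b=0$ (so $a=1$). Then $(x,y)$ itself belongs to $V$ for every $\eta$, whereas your estimate would force $1=\|y-by\|\leq\eta'$. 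This is not a marginal case: for the $\ell_\infty$-norm, $b\neq1$ forces $a=1$, and that is exactly how the lemma is used in Proposition~\ref{prop:sumasabsolutasbaja}(b). Consequently $N\bigl(a(2-\delta)+\theta,\,2b+\eta'\bigr)$ is not an upper bound for $\|(u,v)-(ax,by)\|_N$.

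The step is easy to repair without any upper bound on $\|v\|$. You may take $\delta\leq 1$. Keep only $\|v-by\|\leq\|v\|+b$, put $\kappa:=\bigl|\|u\|-a\bigr|$, and note that $\|u-ax\|\leq(2-\delta)\|u\|+\kappa=\|u\|+\bigl((1-\delta)\|u\|+\kappa\bigr)$. Monotonicity and the triangle inequality for $N$ then give
\[
\|(u,v)-(ax,by)\|_N\leq N(\|u\|,\|v\|)+N\bigl((1-\delta)\|u\|+\kappa,\,b\bigr)\leq 1+N\bigl((1-\delta)a,\,b\bigr)+(2-\delta)\kappa .
\]
Convexity yields $N((1-\delta)a,b)\leq\delta N(0,b)+(1-\delta)N(a,b)=1-\delta(1-b)<1$, because $b<1$. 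Your argument gives $\kappa\to0$ uniformly on $V$ as $\eta\to0$, so for small $\eta$ the distance stays below $2-\delta(1-b)/2$. That is the contradiction you wanted. With this replacement, and symmetrically for (b), the proof is complete.
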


\begin{proof}[Proof of Proposition \ref{prop:sumasabsolutasbaja}]
    (a). Let $t:=\max\{s\in [0,1] \colon N(s,1)\leq 1  \}$ and observe that $t<1$, because $N$ is not the $\ell_\infty$-norm. Suppose that $X$ fails the weak-DD2P (the same argument works for $Y$). 
    Hence, there exists a non-empty weak open set $U$ in $B_X$ without super $\Delta$-points. Without loss of generality, we may assume that $U$ satisfies the following conditions: 
    \begin{equation*}
    U\subseteq \{x\in B_X\colon \|x\|>t\} \ \text{ and} \ \text{  if $z\in U$, then $\frac{z}{\|z\|}\in U$}. 
    \end{equation*} (Just consider the intersection of the relative weakly open set $\{x\in B_X\colon \|x\|>t\}$ with a finite intersection of slices of $B_X$; if $X$ were finite-dimensional, then $B_X$ contains no super $\Delta$-points, so we can take any relative open set $U$ satisfying the requirements.)  
        Define $$V:=\left\{(z,w)\in B_{X\oplus_N Y} \colon z\in U \right\}$$
        which is a non-empty weak open set in $B_{X\oplus_N Y}$ (it is the inverse image of $U$ by the restriction to $B_{X\oplus_N Y}$ of the canonical projection to the first coordinate, which is weak-to-weak continuous). Hence, there must be some $(ax,by)\in V$ with $x\in S_X$, $y\in S_Y$, $a,b\geq 0$ and $N(a,b)=1$ such that $(ax,by)$ is a super $\Delta$-point. Since $ax\in U$, we obtain $a=\norm{ax}>t$, which yields to $b\neq 1$ from the maximality of $t$. Thanks to Lemma \ref{lema:superDeltatosummands}, we have that $x$ is a super $\Delta$-point. But $x=\frac{ax}{\|ax\|}\in U$, getting thus a contradiction.
        
        (b). Suppose that  both $X$ and $Y$ fail the weak-DD2P. Hence, there are non-empty weak open sets $U$ in $B_X$ and $V$ in $B_Y$ without super $\Delta$-points. Consider $V=U\times W$, which is clearly a non-empty weak open set in $B_{X\oplus_\infty Y} $. Therefore, there is some super $\Delta$-point $(ax,by)$ where $x\in S_X$, $y\in S_Y$, $a,b\geq 0$ and $N(a,b)=1$. We consider three cases
        \begin{enumerate}
            \item If $a=b=1$, then $x=ax\in V$, $y=by\in W$ and $x$ or $y$ is a super $\Delta$-point thanks to Lemma \ref{lema:superDeltatosummands}.
            \item If $a\neq 1$, then $b=1$, $y=by\in W$ and $y$ is a super $\Delta$-point thanks to Lemma \ref{lema:superDeltatosummands}.
            \item If $b\neq 1$, then $a=1$, $x=ax\in V$ and $x$ is a super $\Delta$-point thanks to Lemma \ref{lema:superDeltatosummands}.
        \end{enumerate}
        In every case we obtain a contradiction, so $X$ or $Y$ must have the weak-DD2P.
\end{proof}

\subsection{Köthe-Bochner spaces}\label{subsection:kothe}
For the reader’s convenience, we begin by recalling the presentation of Köthe--Bochner spaces adopted in \cite{Hardtke}. Let $(S,\mathcal{A},\mu)$ be a complete $\sigma$-finite measure space. A \emph{Köthe function space} over $(S,\mathcal{A},\mu)$ is a Banach space $(E,\|\cdot\|_E)$ of real-valued measurable functions, identified up to equality $\mu$-almost everywhere, which satisfies the following conditions:
\begin{enumerate}
    \item $\chi_A \in E$ for every $A\in\mathcal{A}$ with $\mu(A)<\infty$;
    \item every $f\in E$ is $\mu$-integrable over each measurable set $A\in\mathcal{A}$ with $\mu(A)<\infty$;
    \item whenever $f\in E$ and $g$ is a measurable function such that
    \[
    |g(t)|\leq |f(t)| \quad \mu\text{-a.e.},
    \]
    one has $g\in E$ and
    \[
    \|g\|_E\leq \|f\|_E.
    \]
\end{enumerate}

Let $X$ be a Banach space. A function $f\colon S\to X$ is said to be \emph{simple} if there exist finitely many pairwise disjoint sets $A_1,\dots,A_n\in\mathcal{A}$, each of finite measure, such that $f$ is constant on each $A_i$ and vanishes outside $\bigcup_{i=1}^n A_i$.

A function $f\colon S\to X$ is called \emph{Bochner measurable} if there exists a sequence of simple functions $(f_n)_n$ such that
\[
\|f_n(t)-f(t)\|\longrightarrow 0
\quad \mu\text{-a.e.}
\]

Given a Köthe function space $E$ and a Banach space $X$, we denote by $E(X)$ the space of all Bochner measurable functions $f\colon S\to X$ such that the map $t\mapsto\|f(t)\|$ belongs to $E$. Endowed with the norm
\[
\|f\|_{E(X)}=\bigl\|t\mapsto \|f(\cdot)\|\bigr\|_E,
\]
the space $E(X)$ is a Banach space, called the corresponding \emph{Köthe--Bochner space}.
Typical examples are the Lebesgue--Bochner spaces $L_p(\mu,X)$ for $1\leq p<\infty$. We refer the reader to \cite{Lin} for further details. 

It was proved by J.-D.~Hardtke in \cite{Hardtke} that the classical diameter two properties are preserved under the formation of Köthe-Bochner spaces. His proof relies on the use of test families (see \cite[Definition 1.1]{Hardtke}) together with stability results for absolute sums. However, a different argument was developed in \cite{lp} to show that the DD2P (and also the DLD2P and the convex-DLD2P) are likewise stable under these constructions. In what follows, we adopt this latter approach to prove that the weak-DD2P is also preserved when passing to Köthe-Bochner spaces.

We first show that a broad class of functions in $E(X)$ are super $\Delta$-points.

\begin{proposition}\label{prop:superDeltaKotheBochner}
Let $(S,\mathcal{A},\mu)$ be a complete $\sigma$-finite measure space, let $E$ be a Köthe function space over $(S,\mathcal{A},\mu)$, and let $X$ be a Banach space such that the simple functions are dense in $E(X)$. Suppose that $x_1,\ldots,x_n\in S_X$ are super $\Delta$-points, that $A_1,\ldots,A_n\in\mathcal{A}$ are pairwise disjoint measurable sets, and that $\lambda_1,\ldots,\lambda_n>0$ satisfy
\[
\Bigl\|\sum\nolimits_{i=1}^n \lambda_i \chi_{A_i}\Bigr\|_E=1.
\]
Then the function
\[
f:=\sum_{i=1}^n \lambda_i x_i \chi_{A_i}
\]
is a super $\Delta$-point in $E(X)$.
\end{proposition}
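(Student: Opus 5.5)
We will show that for every $\varepsilon>0$ the point $f$ lies in the weak closure of $\Delta_\varepsilon(f):=\{g\in B_{E(X)}\colon \|f-g\|>2-\varepsilon\}$; this is equivalent to $f$ being a super $\Delta$-point, and it is the reformulation already used in Lemma~\ref{lema:superDeltasumainfinito}. The construction is coordinatewise. For each $i$, since $x_i$ is a super $\Delta$-point of $X$, there is a net $(x_{i,\alpha})_\alpha\subseteq B_X$ converging weakly to $x_i$ with $\|x_i-x_{i,\alpha}\|>2-\varepsilon$. We index everything by the product directed set $\alpha=(\alpha_1,\dots,\alpha_n)$ and define
\[
g_\alpha:=\sum_{i=1}^n \lambda_i x_{i,\alpha_i}\chi_{A_i}.
\]

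The first step is to check that $g_\alpha\in B_{E(X)}$. Pointwise $\|g_\alpha(t)\|\le \sum_i\lambda_i\chi_{A_i}(t)$, so the lattice condition (3) of a Köthe space gives $\|g_\alpha\|_{E(X)}\le \|\sum_i\lambda_i\chi_{A_i}\|_E=1$.

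The second step is to check that $g_\alpha\rightharpoonup f$. The map $X\to E(X)$, $x\mapsto \lambda_i x\chi_{A_i}$, is bounded and linear (its norm is $\lambda_i\|\chi_{A_i}\|_E$), hence weak-to-weak continuous. If some $A_i$ had infinite measure, we would need $\chi_{A_i}\in E$; this holds because $f\in E(X)$ forces $\lambda_i\chi_{A_i}\le \|f(\cdot)\|\in E$. Summing the finitely many coordinates gives weak convergence of the net, which is the only place the structure of $E(X)$ enters. The hypothesis that simple functions are dense would matter only if we instead wanted to describe $E(X)^*$ through $E'(X^*)$; the direct argument avoids this.

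The third and main step is the norm estimate $\|f-g_\alpha\|>2-\varepsilon$. Pointwise on $A_i$ we have $\|f(t)-g_\alpha(t)\|=\lambda_i\|x_i-x_{i,\alpha_i}\|>\lambda_i(2-\varepsilon)$. Hence
\[
\|f-g_\alpha(\cdot)\|\ \ge\ (2-\varepsilon)\sum_i\lambda_i\chi_{A_i}\quad\text{a.e.},
\]
and monotonicity of the Köthe norm gives $\|f-g_\alpha\|_{E(X)}\ge (2-\varepsilon)\cdot 1$. Running the construction with $\varepsilon/2$ in place of $\varepsilon$ gives the strict inequality, so $g_\alpha\in\Delta_\varepsilon(f)$.

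The expected obstacle is this monotonicity argument, which is why we keep the pointwise lower bound uniform, by the constant factor $2-\varepsilon$, on each $A_i$. This avoids any dependence on order continuity of $E$. With the three steps in place, $f\in\overline{\Delta_\varepsilon(f)}^{w}$ for every $\varepsilon$, which concludes the proof.
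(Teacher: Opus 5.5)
Your proof is correct and follows essentially the same route as the paper: both replace each $x_i$ coordinatewise by a point of $B_X$ that is weakly close to $x_i$ and at distance almost $2$ from it, and both transfer weak closeness to $E(X)$ through the bounded maps $x\mapsto x\chi_{A_i}$. The only difference is presentational: the paper works with an explicit basic weak neighbourhood and the functionals $x^*_{i,k}(x)=f_k^*(x\chi_{A_i})$, whereas you use nets and weak-to-weak continuity; your lattice-property argument for $\|f-g_\alpha\|\geq 2-\varepsilon$ and your remark that density of simple functions is not needed fill in details the paper calls straightforward.
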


\begin{proof}
We will adapt to our setting the proof of \cite[Theorem 4.1 (a)]{lp}. 

Pick $\varepsilon>0$ and $W$ a weak open set of $B_{E(X)}$ containing $f$. We have to find $\widetilde{f}\in W$ with $\|f-\widetilde{f}\|\geq 2-\varepsilon$. We may assume that $W$ is of the form $$W=\left\{ g\in B_{E(X)} \colon |f_1^*(g-f)|<\delta,\ldots,|f_m^*(g-f)|<\delta \right\},$$
   for some $\delta>0$ and $f_1^*,\ldots, f_n^*\in S_{E(X)^*}$. Therefore, define, for each $i\in \{1,\ldots,n\}$ and each $k\in \{1,\ldots,m\}$, the functionals $x_{i,k}^*$ given by 
   $$x_{i,k}^*(x):= f_k^*(x\chi_{A_i}), \quad \forall x\in X.$$
   It is easy to see that $x_{i,k}^*\in X^*$ and $\norm{x_{i,k}^*}\leq \norm{\chi_{A_i}}$, for all $i\in \{1,\ldots,n\}$ and $k\in \{1,\ldots,m\}$. Next, consider 
   $$V_i:= \left\{ x\in B_X \colon |x_{i,1}^*(x-x_i)|<\frac{\delta}{n\lambda_i}, \ldots, |x_{i,m}^*(x-x_i)|<\frac{\delta}{n\lambda_i}\right\},$$ for each $i\in \{1,\ldots,n\}$. They are clearly weak open sets in $B_X$ and satisfy that $x_i\in V_i$ for all $i\in\{1,\ldots,n\}$. Since $x_1,\ldots,x_n$ are super $\Delta$-points, we can find $y_i\in V_i$ with $\norm{x_i-y_i}\geq 2-\varepsilon$, for each $i\in \{1,\ldots,n\}$. Hence, write $\widetilde{f}:=\sum_{i=1}^n \lambda_i y_i \chi_{A_i}$. From $\norm{y_i}\leq1= \norm{x_i}$, it follows that $\|\widetilde{f}(\cdot)\|_E\leq \norm{f(\cdot)}_E$, so $\widetilde{f}\in E(X)$ and $\|\widetilde{f}\|=1$. 

   On the one hand, $$\left|f_k^*(\widetilde{f}-f)\right|= \left|\sum_{i=1}^n \lambda_i x_{i,k}^*(y_i-x_i) \right|\leq \sum_{i=1}^n \lambda_i|x_{i,k}^*(y_i-x_i)|\leq \sum_{i=1}^n \lambda_i\frac{\delta}{n\lambda_i}=\delta,$$
   for all $k\in \{1,\ldots,m\}$, so $\widetilde{f}\in W$. On the other hand, it is straightforward to verify that $\norm{f-\widetilde{f}}\geq 2-\varepsilon$, since $\norm{x_i-y_i}\geq 2-\varepsilon$, for all $i\in \{1,\ldots,n\}$. Thus, $f$ is a super $\Delta$-point.
\end{proof}

Now, we can prove the promised result.

\begin{theorem}\label{thm:Bochner}
    Let $(S,\mathcal A, \mu)$ be a complete $\sigma$-finite measure space, let $E$ be a Köthe function space over $(S,\mathcal A, \mu)$ and let $X$ be a Banach space such that the simple functions are dense in $E(X)$. If $X$ has the weak-DD2P, then $E(X)$ also has the weak-DD2P.
\end{theorem}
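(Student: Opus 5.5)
We aim to show that every $f\in B_{E(X)}$ is a weak limit of super $\Delta$-points of the form described in Proposition~\ref{prop:superDeltaKotheBochner}. Since the weak closure of a set contains its norm closure, and since super $\Delta$-points lie in $S_{E(X)}$, it suffices to prove that every simple function in a norm-dense subset of $B_{E(X)}$ lies in the weak closure of the set $\Delta$ of super $\Delta$-points. The first step is therefore a reduction. The simple functions are dense in $E(X)$, and $X$ is infinite-dimensional (it has the weak-DD2P, so $B_X=\overline{S_X}^{w}$). Hence we may fix a simple function
\[
g=\sum_{i=1}^n \mu_i z_i\chi_{A_i}\in B_{E(X)},
\]
with $A_i$ pairwise disjoint of finite positive measure, $\mu_i>0$, and $z_i\in B_X$. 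We need to show that $g\in\overline{\Delta}^{w}$.

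The second step normalises the coefficients. Write $\|z_i\|\le 1$ and try to replace $z_i$ by norm-one vectors without moving too far weakly. Since $X$ has the weak-DD2P, each $z_i\in B_X$ is the weak limit of a net $(x_{i,\alpha})_\alpha$ of super $\Delta$-points in $S_X$. Set
\[
f_\alpha=\sum_{i=1}^n \lambda_i x_{i,\alpha}\chi_{A_i},\qquad \lambda_i=\mu_i,
\]
using the product directed set. The map $(x_1,\dots,x_n)\mapsto\sum_i\lambda_i x_i\chi_{A_i}$ is bounded and linear from $X^n$ into $E(X)$, hence weak-to-weak continuous. Therefore $f_\alpha\to g$ weakly.

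The third step checks the norm. We have $\|f_\alpha\|=\bigl\|\sum_i\lambda_i\chi_{A_i}\bigr\|_E=:c$, which does not depend on $\alpha$. Also $c\ge\|g\|$, but $c$ may exceed $1$. This is the main obstacle: Proposition~\ref{prop:superDeltaKotheBochner} requires $c=1$. To handle it, I would first reduce to $\|g\|=1$ with all $\|z_i\|=1$. The natural route is to show that $\overline{\{\text{simple functions with } \|z_i\|=1,\ \|\sum\mu_i\chi_{A_i}\|_E=1\}}^{w}$ contains $B_{E(X)}$. Two ways are available. The first: inside $X$, the infinite-dimensionality gives that each $z_i$ is a weak limit of vectors in $S_X$, but multiplying coefficients in $E$ changes the $E$-norm. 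The second, which I prefer, splits each $A_i$ or adds an extra piece. Since $E(X)$ is built over a $\sigma$-finite measure and $X$ is infinite-dimensional, one can write $z_i$ as a weak limit of $\|z_i\|y+(1-\|z_i\|)$-type perturbations. More simply, one uses weak density of the unit sphere in $B_{E(X)}$: write $g$ as a weak limit of $g+h_\beta$ with $h_\beta=\sum_i\chi_{A_i}w_{i,\beta}$, where the $w_{i,\beta}$ are weakly null vectors in $X$ chosen so that $\|z_i+w_{i,\beta}\|=1$ for every $i$. Such vectors exist because in infinite dimensions each $z_i$ admits a weakly null net $w_{i,\beta}$ with $\|z_i+w_{i,\beta}\|=1$, by the standard argument that $\|z+tw\|$ ranges continuously over the kernel of finitely many functionals. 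This yields simple functions $\sum\mu_i u_i\chi_{A_i}$ with $u_i\in S_X$ and $\|\sum\mu_i\chi_{A_i}\|_E=\|g\|$. Finally, to push $\|g\|$ up to $1$, I would argue that $g$ is first approximated in norm by $g/\|g\|$-type elements, using that $\overline{S_{E(X)}}^{w}=B_{E(X)}$. Equivalently, I would apply the previous construction to simple functions of norm one, which are themselves weakly dense in $B_{E(X)}$ since $E(X)$ is infinite-dimensional and simple norm-one functions are norm dense in $S_{E(X)}$.

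Putting the steps together: every simple $g\in S_{E(X)}$ with unit-vector values is a weak limit of the functions $f_\alpha$, each of which is a super $\Delta$-point by Proposition~\ref{prop:superDeltaKotheBochner}. Such functions are weakly dense in $B_{E(X)}$ by the reductions above. Since weak closure is idempotent, $\overline{\Delta}^{w}=B_{E(X)}$, and so $E(X)$ has the weak-DD2P.
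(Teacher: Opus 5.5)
Your overall strategy is the paper's. You reduce to a simple function of norm one using the weak density of $S_{E(X)}$ and the norm density of simple functions. You approximate its values in $X$ by super $\Delta$-points, push the nets forward through the bounded operators $x\mapsto x\chi_{A_i}$, and invoke Proposition~\ref{prop:superDeltaKotheBochner}.

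However, your third step is confused, and the ``main obstacle'' you identify comes from your own step two. It arises only because you approximate the non-normalised vectors $z_i\in B_X$. The paper instead writes the norm-one simple function as $f=\sum_i x_i\chi_{A_i}$ with $x_i\neq 0$. It approximates the unit vectors $x_i/\|x_i\|$ by nets of super $\Delta$-points in $S_X$ and takes the coefficients $\lambda_i=\|x_i\|$. Then $\bigl\|\sum_i\lambda_i\chi_{A_i}\bigr\|_E=\|f\|_{E(X)}=1$ holds by the very definition of the norm of $E(X)$, so the proposition applies at once. Your proposed reduction (``$\|g\|=1$ with all $\|z_i\|=1$'') is therefore just a rewriting of the representation: every simple function of norm one already has a representation with unit vectors and coefficients of $E$-norm one. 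Nothing needs to be perturbed.

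The perturbation detour you offer instead is not only unnecessary but incorrect as stated. If $u_i=z_i+w_{i,\beta}\in S_X$, then $\bigl\|\sum_i\mu_iu_i\chi_{A_i}\bigr\|=\bigl\|\sum_i\mu_i\chi_{A_i}\bigr\|_E$. This is at least $\|g\|=\bigl\|\sum_i\mu_i\|z_i\|\chi_{A_i}\bigr\|_E$ and can be strictly larger when some $\|z_i\|<1$. So your claimed equality $\bigl\|\sum_i\mu_i\chi_{A_i}\bigr\|_E=\|g\|$ fails in general. The perturbed functions need not even lie in $B_{E(X)}$. Likewise, $g$ is not approximated \emph{in norm} by $g/\|g\|$; what you need is the weak density of $S_{E(X)}$ in $B_{E(X)}$. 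Your closing paragraph becomes a correct proof once step three is replaced by the normalisation above.
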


\begin{proof}
Let $W$ be a non-empty weakly open subset of $B_{E(X)}$. Since $\overline{S_{E(X)}}^{\omega}=B_{E(X)}$ (as $X$ is infinite-dimensional) and the simple functions are dense in $E(X)$, we may assume that there is some
\[
f=\sum_{i=1}^n x_i\chi_{A_i}\in W\cap S_{E(X)},
\]
where $A_1,\ldots,A_n\in\mathcal{A}$ are pairwise disjoint measurable sets of finite measure and $x_1,\ldots,x_n\in X\setminus\{0\}$.

Since $X$ has the weak-DD2P, for each $i\in\{1,\ldots,n\}$ there exists a net $(x_{i,\alpha_i})_{\alpha_i\in\mathcal{D}_i}\subseteq S_X$ of super $\Delta$-points converging weakly to $\frac{x_i}{\|x_i\|}$. For each $i\in\{1,\ldots,n\}$, consider the linear operator $T_i\colon X\to E(X)$ defined by
\[
T_i(x)=x\chi_{A_i}, \qquad x\in X.
\]
Each $T_i$ is bounded and therefore weak-to-weak continuous. Consequently, the net
\[
\left(\sum_{i=1}^n \|x_i\|\, x_{i,\alpha_i}\chi_{A_i}\right)_
{(\alpha_1,\ldots,\alpha_n)\in \mathcal{D}_1\times\cdots\times\mathcal{D}_n}
\]
converges weakly to $f$ in $E(X)$.

Finally, each element of this net is a super $\Delta$-point by Proposition~\ref{prop:superDeltaKotheBochner}, since
\[
\Bigl\|\sum_{i=1}^n \|x_i\|\,\chi_{A_i}\Bigr\|_E=\|f\|_{E(X)}=1.
\]
This completes the proof.
\end{proof}

In particular, this applies to $L_p(\mu,X)$ for $1\leq p < \infty$. 

\begin{corollary}\label{thm:Bochner-Lp}
Let $(S,\mathcal A, \mu)$ be a complete $\sigma$-finite measure space and let $X$ be a Banach space. If $X$ has the weak-DD2P, then $L_p(\mu,X)$ has the weak-DD2P for $1\leq p< \infty$. 
\end{corollary}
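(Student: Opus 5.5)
The corollary will follow from Theorem~\ref{thm:Bochner} once we check that $L_p(\mu,X)$, $1\leq p<\infty$, fits its framework. We take $E=L_p(\mu)$ over the given complete $\sigma$-finite measure space, so that $E(X)=L_p(\mu,X)$ isometrically, with $\|f\|_{E(X)}=\bigl\|\,\|f(\cdot)\|\,\bigr\|_{L_p(\mu)}$. The proof then reduces to two verifications.

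\emph{$L_p(\mu)$ is a Köthe function space.} We check the three axioms directly:
\begin{itemize}
\item $\chi_A\in L_p(\mu)$ whenever $\mu(A)<\infty$, since $\|\chi_A\|_p=\mu(A)^{1/p}$;
\item for $f\in L_p(\mu)$ and $\mu(A)<\infty$, Hölder's inequality gives $\int_A|f|\,d\mu\leq \|f\|_p\,\mu(A)^{1-1/p}<\infty$ (for $p=1$ this is trivial);
\item if $g$ is measurable with $|g|\leq|f|$ a.e., then $\int|g|^p\leq\int|f|^p$, so $g\in L_p(\mu)$ and $\|g\|_p\leq\|f\|_p$.
\end{itemize}

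\emph{Simple functions are dense in $L_p(\mu,X)$.} This is the only step requiring some care, and it is the standard density argument for Bochner spaces with $p<\infty$. Let $f\in L_p(\mu,X)$. Since $f$ is Bochner measurable, there are simple functions $s_n$ with $s_n\to f$ a.e. We replace them by
\[
\tilde s_n:=s_n\,\chi_{\{\|s_n\|\leq 2\|f\|\}},
\]
which are still simple, still converge to $f$ a.e. (on $\{f\neq 0\}$ eventually $\|s_n\|\leq 2\|f\|$; on $\{f=0\}$ either $\tilde s_n=0$ or $\|\tilde s_n\|\leq 2\|f\|=0$), and satisfy $\|\tilde s_n-f\|\leq 3\|f\|$. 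Since $(3\|f\|)^p$ is integrable, dominated convergence yields $\|\tilde s_n-f\|_p\to 0$.

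One point to note is that each $\tilde s_n$ vanishes outside $\bigcup_i A_i$ with each $A_i$ of finite measure, as the definition of simple function requires. This holds automatically: a constant nonzero value $x$ on a set $A$ contributes $\|x\|^p\mu(A)$ to $\|\tilde s_n\|_p^p$, and $\|\tilde s_n\|\leq 2\|f\|\in L_p$, so each such $A$ has finite measure. If needed, $\sigma$-finiteness could also be used to truncate to sets of finite measure.

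With both hypotheses verified, Theorem~\ref{thm:Bochner} applies and gives that $L_p(\mu,X)$ has the weak-DD2P whenever $X$ does. We expect no real obstacle: only the density of simple functions needs an argument, and the restriction $p<\infty$ is used exactly there, through dominated convergence.
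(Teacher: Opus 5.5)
Your proposal is correct and follows the paper's own route: the paper applies Theorem~\ref{thm:Bochner} with $E=L_p(\mu)$, and its only justification is that simple functions are dense in $L_p(\mu,X)$ by the Dominated Convergence Theorem. You write out the details the paper leaves implicit, namely the K\"othe axioms for $L_p(\mu)$ and the truncation $\tilde s_n=s_n\chi_{\{\|s_n\|\leq 2\|f\|\}}$ with domination by $3\|f\|$, and these details are sound.
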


Indeed, the result follows directly from Theorem~\ref{thm:Bochner} as simple functions on $L_p(\mu,X)$ are dense due to Lebesgue Dominated Convergence Theorem. 

\subsection{Projective and symmetric tensor product}\label{subsection:tensores}
The purpose of this subsection is to obtain examples of Banach spaces with the weak-DD2P within the class of (symmetric) projective tensor products of Banach spaces. We begin with the case of projective tensor products, for which we first introduce some notation.

The \emph{projective tensor product} of Banach spaces $X$ and $Y$, denoted by $X\pten Y$, is defined as the completion of the algebraic tensor product $X\otimes Y$ endowed with the norm
\[
\|z\|_{\pi} := \inf \left\{ \sum_{n=1}^k \|x_n\|\,\|y_n\| \colon z=\sum_{n=1}^k x_n\otimes y_n \right\},
\]
where the infimum is taken over all such representations of $z$. It is well known that $\|x\otimes y\|_{\pi}=\|x\|\,\|y\|$ for every $x\in X$ and $y\in Y$, and that the closed unit ball of $X\pten Y$ coincides with the closed convex hull of the set
\[
B_X\otimes B_Y=\{x\otimes y \colon x\in B_X,\ y\in B_Y\}.
\]

Moreover, every operator $G\in L(X,Y^*)$ acts on $X\pten Y$ by
\[
G\Bigl(\sum_{n=1}^k x_n\otimes y_n\Bigr)
= \sum_{n=1}^k G(x_n)(y_n),
\qquad \sum_{n=1}^k x_n\otimes y_n\in X\otimes Y.
\]
This action induces a linear isometry from $L(X,Y^*)$ onto $(X\pten Y)^*$ (see, for instance, \cite[Theorem~2.9]{Ryan}). For further background on diameter two properties and related notions in tensor product spaces, we refer the reader to \cite{aln13,blr4,llr,MartinRueda-symm}.

With the above notation in place, we proceed to obtain examples of tensor product spaces $X\pten Y$ with the weak-DD2P under the assumption that $X$ is a vector-valued function algebra. To this end, we begin with the following proposition, which identifies a class of super $\Delta$-points in this setting.

\begin{proposition}\label{prop:superDeltaTensor}
    Let $X, Y$ be complex Banach spaces and let $A$ be a complex function algebra on a regular Hausdorff topological space $\Omega$. 
    Let $u=\sum_{i=1}^n \lambda_i (f_i\otimes y_i)\in A(\Omega,X)\pten Y$ be a finite-rank tensor with $\lambda_1,\ldots, \lambda_n\geq 0$, $\sum_{i=1}^n \lambda_i=1$, $y_1,\ldots, y_n \in S_Y$ and $f_1,\ldots, f_n\in S_{A(\Omega,X)}$. Suppose that $\norm{\sum_{i=1}^n \lambda_i y_i}=1$ and that there exists $\omega_0\in \SB(A)'$ and $x_0\in S_X$ such that $$f_i(\omega_0)=x_0, \quad \text{ for all } i\in \{1,\ldots,n\}.$$
    Then, $u$ is a super $\Delta$-point.
\end{proposition}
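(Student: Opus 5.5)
I will adapt the proof of Proposition~\ref{prop:superDeltasubalgebraX} to the tensor setting, perturbing each $f_i$ in the same way and keeping the $y_i$ fixed. Fix a weakly open $W\subseteq B_{A(\Omega,X)\pten Y}$ containing $u$ and $\varepsilon>0$. As in that proof, set $U_n=\{\omega\colon \max_i\|f_i(\omega)-x_0\|<1/n\}$, which is open and contains $\omega_0$. Since $\omega_0\in \SB(A)'$, we can pick distinct $\omega_n\in\SB(A)$ and pairwise disjoint open $V_n$ with $\omega_n\in V_n\subseteq U_n$. Lemma~\ref{lemma:engenurysohn} then gives $\phi_n\in A$ with $\phi_n(\omega_n)=1=\|\phi_n\|$, $\sup_{\Omega\setminus V_n}|\phi_n|<2^{-n}$, and inequality \eqref{Eq4}. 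Define
\[
f_{i,n}:=\left(1-2^{-n}\right)(1-\phi_n)f_i-\phi_n x_0\in B_{A(\Omega,X)},\qquad u_n:=\sum_{i=1}^n\lambda_i f_{i,n}\otimes y_i\in B_{A(\Omega,X)\pten Y}.
\]

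For weak convergence $u_n\rightharpoonup u$, the proof of Proposition~\ref{prop:superDeltasubalgebraX} already gives $f_{i,n}\rightharpoonup f_i$ in $A(\Omega,X)$. The map $g\mapsto g\otimes y_i$ is bounded linear from $A(\Omega,X)$ into the tensor product, hence weak-to-weak continuous. Therefore $u_n\rightharpoonup u$, and $u_n\in W$ for all large $n$.

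The main step is the lower bound $\|u-u_n\|_\pi\geq 2-\varepsilon$, since a point evaluation alone does not control the projective norm. My plan is to use duality through an operator $G\in L(A(\Omega,X),Y^*)$ of norm at most one. Pick $x^*\in S_{X^*}$ with $x^*(x_0)=1$ and $y^*\in S_{Y^*}$ with $\mathrm{Re}\,y^*(\sum_i\lambda_iy_i)=1$; the latter exists because $\|\sum\lambda_iy_i\|=1$. Let $G(g):=x^*(g(\omega_n))\,y^*$, which has norm at most one. Then
\[
\langle G,u-u_n\rangle=\sum_i\lambda_i\, x^*\bigl(f_i(\omega_n)+x_0\bigr)\,y^*(y_i).
\]
Since $\|f_i(\omega_n)-x_0\|<1/n$, each factor $x^*(f_i(\omega_n)+x_0)$ lies within $1/n$ of $2$. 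Using $|y^*(y_i)|\leq1$ and $\sum\lambda_i=1$, we get $|\langle G,u-u_n\rangle|\geq 2\,\mathrm{Re}\sum_i\lambda_iy^*(y_i)-1/n=2-1/n$. Taking $n$ large with $1/n<\varepsilon$ gives $\|u-u_n\|\geq 2-\varepsilon$, so $u$ is a super $\Delta$-point.

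The remaining routine checks are that $\|f_{i,n}\|\leq1$ (this is exactly \eqref{Eq4} combined with $\|f_i\|\leq1$) and that $u$ itself has norm one. The latter follows from the same functional with $\omega_0$ in place of $\omega_n$, which gives $\langle G,u\rangle=1$. The hypothesis $\|\sum\lambda_iy_i\|=1$ is exactly what makes this duality estimate work, and choosing that test functional is the crux of the argument.
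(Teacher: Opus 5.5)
Your proof is correct and follows essentially the same route as the paper. It uses the same perturbations $f_{i,k}=(1-2^{-k})(1-\phi_k)f_i-\phi_k x_0$ built from Lemma~\ref{lemma:engenurysohn}, keeps the $y_i$ fixed, and tests against the same norm-one operator $g\mapsto x^*(g(\omega_k))\,y^*$ in $L(A(\Omega,X),Y^*)$. The differences are cosmetic: you localise with $\|f_i(\omega)-x_0\|$ instead of $|x^*(f_i(\omega))-1|$, you only need $|y^*(y_i)|\leq 1$ rather than $y^*(y_i)=1$, and you additionally check $\|u\|=1$. You should, however, rename the sequence index, which currently clashes with the number $n$ of summands in $u$.
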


\begin{proof}
    Fix $\varepsilon>0$ and a weakly open set $W$ in $B_{A(\Omega,X)\pten Y}$ with $u\in W$. Pick $x^*\in S_{X^*}$ satisfying that $x^*(x_0)=1$. Since $\Omega$ is Hausdorff, $\omega_0\in \SB(A)'$ and $x^*(f_i(\omega_0))=x^*(x_0)=1$ for all $i\in \{1,\ldots,n\}$, we can find a sequence $(\omega_k)_k\subseteq \SB(A)$ with $\omega_k\neq \omega_j$ for all $k\neq j$, satisfying $|x^*(f_i(\omega_k))-1|<\frac{1}{k}$ for all $k\in \N$ and for all $i\in \{1,\ldots,n\}$. Using again that $\Omega$ is Hausdorff, there is a sequence $(U_k)_k$ formed of pairwise disjoint open sets in $\Omega$ such that $\omega_k\in U_k$ for all $k\in \N$. Furthermore, up to replacing $U_k$ with $ \left(\bigcap_{i=1}^n \left\{\omega\in \Omega\colon |x^*(f_i(\omega))-1|<\frac{1}{k}\right\}\right)\cap U_k$ which is still open and non-empty, we may assume that for each $k\in \N$ we have $$|x^*(f_i(\omega))-1|<\frac{1}{k}, \quad  \forall \omega\in U_k,\ \forall i\in \{1,\ldots,n\}.$$
    Now, invoking Lemma \ref{lemma:engenurysohn} take, for each $k\in \N$, a function $\phi_k\in A$ such that $\phi_k(\omega_k)=1=\norm{\phi_k}$, $\sup_{\omega\in \Omega\setminus U_k}|\phi_k(\omega)|<2^{-k}$ and \begin{equation*}
        |\phi_k(\omega)|+\left(1-2^{-k}\right)|1-\phi_k(\omega)|\leq 1, \quad \forall \omega\in \Omega.
    \end{equation*}
    As we did in the proof of Proposition \ref{prop:superDeltasubalgebraX}, the sequence $(\phi_k)_k$ is weakly convergent to $0$. Hence, defining
    \begin{equation*}
        f_{i,k}:=(1-2^{-k})(1-\phi_k)f_i-\phi_kx_0, \quad \forall k\in \N, \ \forall i\in \{1,\ldots,n\},
    \end{equation*}
    it is easy to see that $(f_{i,k})_k\subseteq B_{A(\Omega,X)}$ and $(f_{i,k})_k$ converges weakly to $f_i$, for each $i\in \{1,\ldots,n\}$. It follows that $(\sum_{i=1}^n \lambda_i (f_{i,k}\otimes y_i))_k$ converges weakly to $u$, and so there exists $m\in \N$ such that $$\sum_{i=1}^n \lambda_i (f_{i,k}\otimes y_i)\in W, \quad \forall k\geq m.$$
    Pick $k_0\in \N$ with $k_0\geq m$ and $\frac{1}{k_0}<\varepsilon$. Therefore, we obtain at the same time that \begin{equation*}
        |x^*(f_i(t_{k_0}))-1|<\frac{1}{k_0}<\varepsilon \quad \text{ and } \quad x^*(f_{i,k_0}(t_{k_0}))=-1,
    \end{equation*}
    for all $i\in \{1,\ldots,n\}$. Finally, taking $y^*\in S_{Y^*}$ with $y^*\left(\sum_{i=1}^n \lambda_i y_i \right)=1$, we have
    \begin{align*}
        \norm{u-u_0}&\geq |(T\otimes y^*)(u-u_0)|\\&= \left| \sum_{i=1}^n \lambda_i x^*(f_i(t_{k_0}))y^*(y_i)-  \sum_{i=1}^n \lambda_i x^*(f_{i,k_0}(t_{k_0}))y^*(y_i)\right| \\&=\left| \sum_{i=1}^n \lambda_i x^*(f_i(t_{k_0}))y^*(y_i)+1\right|\\&\geq 2- \left| \sum_{i=1}^n \lambda_i x^*(f_i(t_{k_0}))y^*(y_i)- 1\right|\\&\geq 2-  \sum_{i=1}^n \lambda_i |x^*(f_i(t_{k_0}))- 1| \geq 2-\varepsilon,
    \end{align*}
    where $T\in S_{A(\Omega,X)^*}$ is given by $T(f)=x^*(f(t_{k_0}))$ for all $f\in A(\Omega,X)$. Observe that we have used $y^*(y_i)=1$ for all $i\in \{1,\ldots,n\}$, which follows from the equality $y^*(\sum_{i=1}^n \lambda_i y_i)=1$. Thus, $u$ is a super $\Delta$-point.
\end{proof}

Now, we are able to establish one of the main results in this section.

\begin{theorem}\label{theo:maintheotensornosim}
    Let $X$ be a complex Banach space and let $A$ be a complex function algebra on a regular Hausdorff topological space $\Omega$. Suppose that the set $\SB(A)'\cap \SB(A)$ is infinite and let $Y$ be a complex Banach space 
    satisfying one of the following conditions:
    \begin{itemize}
        \item[\upshape(a)] Every convex combination of non-empty relatively weakly open subsets of $B_Y$ intersects $S_Y$.
        \item[\upshape(b)] There exists $y^*\in S_{Y^*}$ such that $\acconv\{y\in B_Y\colon y^*(y)=1\}=B_Y$.
    \end{itemize}
    Then, $A(\Omega,X)\pten Y$ has the weak-DD2P.
\end{theorem}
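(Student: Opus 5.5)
Let $Z:=A(\Omega,X)\pten Y$. We must show that every non-empty relatively weakly open subset $W$ of $B_Z$ contains a super $\Delta$-point. The plan is to produce, inside $W$, a tensor of the form required by Proposition~\ref{prop:superDeltaTensor}.

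\emph{Step 1: reduce to a convex combination of elementary tensors.} Since $B_Z=\cconv(B_{A(\Omega,X)}\otimes B_Y)$, a standard argument shows that $W$ contains an element $u=\sum_{i=1}^n\lambda_i f_i\otimes y_i$. Here $\lambda_i\geq 0$, $\sum\lambda_i=1$, $f_i\in B_{A(\Omega,X)}$ and $y_i\in B_Y$. Shrinking $W$, we may assume that
\[
\sum\nolimits_i\lambda_i f_i\otimes U_i\subseteq W
\]
for suitable relatively weakly open sets $U_i\ni y_i$ of $B_Y$ (tensoring is jointly weak-to-weak continuous on bounded sets in each variable separately, and we perturb one variable at a time). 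We likewise keep open neighbourhoods $V_i\ni f_i$ in $B_{A(\Omega,X)}$ with $\sum_i\lambda_i V_i\otimes U_i\subseteq W$. This last inclusion should be obtained by finitely many separate-continuity steps, first fixing the $y$'s and then the $f$'s.

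\emph{Step 2: arrange the $Y$-part so that $\norm{\sum\lambda_i y_i}=1$ and all $y_i\in S_Y$.} Under (a), the set $\sum_i\lambda_iU_i$ meets $S_Y$. So we pick $y_i'\in U_i$ with $\norm{\sum\lambda_i y_i'}=1$. Then each $\norm{y_i'}=1$ whenever $\lambda_i>0$ (discard indices with $\lambda_i=0$). Under (b), let $D=\{y\in B_Y: y^*(y)=1\}$. Since $\acconv(D)$ is dense in $B_Y$, each $y_i$ is approximated weakly by absolutely convex combinations of points of $D$. Writing each such combination as $\sum_j\mu_{ij}\theta_{ij}d_{ij}$ with $|\theta_{ij}|=1$, and absorbing the unimodular scalars $\theta_{ij}$ into the $A(\Omega,X)$ factor, we rewrite $u$ (up to a weak perturbation staying in $W$) as $\sum_{i,j}\lambda_i\mu_{ij}(\theta_{ij}f_i)\otimes d_{ij}$. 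Here $y^*$ equals $1$ on every $d_{ij}$, so $\norm{\sum\lambda_i\mu_{ij}d_{ij}}=1$. If $\sum_j\mu_{ij}<1$, we add the missing mass as $\tfrac12(d\otimes g)+\tfrac12(d\otimes(-g))$-type terms, splitting an arbitrary $g$ to keep the coefficients summing to one. This bookkeeping needs care, but the conclusion is that $W$ contains a tensor whose $Y$-vectors all lie in $D$.

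\emph{Step 3: make the $f_i$ share a peak value.} Take $\omega_n\in\SB(A)'\cap\SB(A)$ distinct, with disjoint neighbourhoods. Fix $x_0\in S_X$, and exactly as in the proof of Theorem~\ref{thm:DenseSubalgebrasX} set
\[
f_{i,n}=(1-2^{-n})(1-\phi_n)f_i+\phi_nx_0 .
\]
Then $f_{i,n}\to f_i$ weakly, with $f_{i,n}\in B_{A(\Omega,X)}$ and $f_{i,n}(\omega_n)=x_0$ for every $i$, using the same $\phi_n$ for all $i$. For $n$ large, $f_{i,n}\in V_i$, so
\[
u_n=\sum\lambda_i f_{i,n}\otimes y_i'\in W .
\]
Moreover $\norm{f_{i,n}}=1$, $\omega_n\in\SB(A)'$, and the $Y$-side satisfies $\norm{\sum\lambda_iy_i'}=1$ with $y_i'\in S_Y$. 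Proposition~\ref{prop:superDeltaTensor} then shows that $u_n$ is a super $\Delta$-point.

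\emph{Main obstacle.} I expect the main obstacle to be Step 2(b). Converting weak approximation by absolutely convex hulls into a genuine element of $W$ requires the rescaling trick with unimodular scalars and the padding of coefficients, and the approximating tensors must stay inside $W$. If the padding fails, I would instead approximate each $y_i$ only by elements of $\aconv(D)$ whose coefficient sum equals $1$. This should be possible because $D$ is a face, so $-D$ and $D$ can be used to fill in mass; the point to check is that this keeps the $Y$-vectors inside the neighbourhoods $U_i$.
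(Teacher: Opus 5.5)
Your proposal follows the paper's proof. You first produce in $W$ a convex combination $\sum_i\lambda_i f_i\otimes y_i$ with $\norm{\sum_i\lambda_i y_i}=1$, using (a) directly or using $y^*$ and the set $D$ in case (b). You then replace every $A(\Omega,X)$-factor $h$ by $(1-2^{-n})(1-\phi_n)h+\phi_n x_0$ and apply Proposition~\ref{prop:superDeltaTensor}.

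One claim must be dropped. In Step~1 you cannot in general find $V_i\ni f_i$ with $\sum_i\lambda_i V_i\otimes U_i\subseteq W$, because $(f,y)\mapsto f\otimes y$ is only separately, not jointly, weakly continuous on bounded sets. For instance, in $\ell_2\pten\ell_2$ one has $e_n\to 0$ weakly, yet the norm-one functional $x\otimes y\mapsto\sum_k x_ky_k$ equals $1$ at every $e_n\otimes e_n$. So neighbourhoods of the $f_i$ chosen for the original $y_i$ need not work once the $y_i$ are moved.

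Nothing is lost by dropping it. After Step~2 the new tensor already belongs to $W$; in case (b) this is the rewritten one with factors $\theta_{ij}f_i$ and $\pm g$. With its $Y$-vectors frozen, the map $(h_k)_k\mapsto\sum_k c_k\,h_k\otimes d_k$ is bounded and linear. Hence the perturbed tensors of Step~3 converge weakly to it and eventually lie in $W$. This sequential order is exactly the paper's.

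Your padding in (b), with $g$ in the first tensor slot, is correct because $\tfrac m2\,g\otimes d+\tfrac m2\,(-g)\otimes d=0$ holds exactly. So the fallback in your last paragraph is not needed. The paper compresses this bookkeeping into ``we may assume $y_i\in D$''. That remark rests on $W$ being relatively norm open and $B_{A(\Omega,X)\pten Y}$ being the closed convex hull of $S_{A(\Omega,X)}\otimes D$.
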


\begin{proof}
    Let $W$ be a non-empty weak open set in $B_{A(\Omega,X)\pten Y}$ and we have to find some super $\Delta$-point in $W$. First, suppose that there is some $u=\sum_{i=1}^n \lambda_i (f_i\otimes y_i)\in W$ with $\lambda_1,\ldots, \lambda_n\geq 0$, $\sum_{i=1}^n \lambda_i =1$, and $\norm{f_i}=\norm{y_i}=1$ for all $i\in \{1,\ldots,n\}$. Moreover, suppose that $\norm{\sum_{i=1}^n \lambda_i y_i}=1$.

    Under these assumptions, we can find a super $\Delta$-point in $W$. Indeed, pick a sequence $(\omega_k)_k\subseteq \SB(A)'\cap \SB(A)$ with $\omega_k\neq \omega_j$ for all $k\neq j$. Since $\Omega$ is Hausdorff, we can find a sequence $(U_k)_k$ of pairwise disjoint open sets in $\Omega$ such that $\omega_k\in U_k$ for all $k\in \N$. Thanks to Lemma \ref{lemma:engenurysohn}, we can find, for each $k\in \N$, a function $\phi_k\in A$ such that $\phi_k(\omega_k)=1=\norm{\phi_k}$, $\sup_{\omega\in \Omega\setminus U_k} |\phi_k(\omega)|<2^{-k}$ and $$|\phi_k(\omega)|+(1-2^{-k})|1-\phi_k(\omega)|\leq 1, \quad \forall \omega\in \Omega.$$ Once again, $(\phi_k)_k$ converges weakly to $0$. Next, fix some $x_0\in S_X$ and define 
    $$f_{i,k}:= (1-2^{-k})(1-\phi_k)f_i+\phi_kx_0, \quad \forall k\in \N.$$
    It follows that $(f_{i,k})_k\subseteq B_{A(\Omega,X)}$ and it converges weakly to $f$. As a consequence, the sequence $(\sum_{i=1}^n \lambda_i (f_{i,k}\otimes y_i))_k$ converges weakly to $u$ and so there is some $k_0$ satisfying that $u_0=\sum_{i=1}^n \lambda_i (f_{i,k_0}\otimes y_i)\in W$. Finally, $f_{i,k_0}(\omega_0)=x_0$ for all $i\in \{1,\ldots,n\}$, so $u_0$ is a super $\Delta$-point, thanks to Proposition \ref{prop:superDeltaTensor}.

    Thus, we have just to guarantee the existence of such  $u$ in $W$, whenever $Y$ satisfies (a) or (b).

    (a). Since $B_{A(\Omega,X)\pten Y}=\cconv\{f\otimes y \colon f\in S_{A(\Omega,X)}, \ y\in S_Y\}$, and since weak open sets are also norm open sets, there is some  $\widetilde{u}=\sum_{i=1}^n \lambda_i (f_i\otimes y_i)\in W$, with $\lambda_1,\ldots, \lambda_n \geq 0$, $\sum_{i=1}^n \lambda_i=1$ and $\norm{f_i}=\norm{y_i}=1$ for all $i\in \{1,\ldots,n\}$. Next, consider the net $(U_\alpha)_\alpha$ formed of weak open neighbourhoods of $0$, and define for each $\alpha$, the convex combination of weak open sets $$V_\alpha =\sum_{i=1}^n \lambda_i (y_i+U_\alpha).$$
        Using the assumption we can find $\widetilde{y}_\alpha \in V_\alpha \cap S_Y$, for each $\alpha$. Thus, $$\widetilde{y}_\alpha = \sum_{i=1}^n \lambda_i(y_i+z_\alpha)$$
        for each $\alpha$, where $(z_\alpha)_\alpha$ is  weakly convergent to $0$. Hence, $(\sum_{i=1}^n \lambda_i(f_i\otimes (y_i+z_\alpha)))_\alpha$ converges weakly to $\widetilde{u}$, and so there is some $\alpha_0$ such that $u=\sum_{i=1}^n \lambda_i (f_i\otimes (y_i+z_{\alpha_0}))\in W$. The conclusion follows since $\norm{\sum_{i=1}^n \lambda_i (y_i+z_{\alpha_0})}=\norm{\widetilde{y}_{\alpha_0}}=1$.
        
      (b). Using again that $B_{A(\Omega,X)\pten Y}=\cconv\{f\otimes y \colon f\in S_{A(\Omega,X)}, \ y\in S_Y\}$, we can find some  $u=\sum_{i=1}^n \lambda_i (f_i\otimes y_i)\in W$, such that $\lambda_1,\ldots, \lambda_n \geq 0$, $\sum_{i=1}^n \lambda_i=1$ and $\norm{f_i}=\norm{y_i}=1$ for all $i\in \{1,\ldots,n\}$. Furthermore, by assumptions there is some $y^*\in S_{Y^*}$ such that $$B_Y=\acconv\{ y\in S_Y \colon y^*(y)=1\}.$$
        Therefore, we may assume that $y_i\in \{ y\in S_Y \colon y^*(y)=1\}$ for all $i\in \{1,\ldots,n\}$, which yields to 
        \[1\geq\norm{\sum_{i=1}^n \lambda_i y_i}\geq \left|\sum_{i=1}^n \lambda_i y^*(y_i)\right|=1.\qedhere\]
\end{proof}

\begin{remark}
Observe that in the proofs of Proposition~\ref{prop:superDeltaTensor} and of Theorem~\ref{theo:maintheotensornosim} we can replace the use of Lemma \ref{lemma:engenurysohn} with the employment of Urysohn's lemma (in the same fashion as we perform in Proposition~\ref{prop:C0(L,X)}) to obtain the following version of Theorem~\ref{theo:maintheotensornosim} for $C_0(L,X)$ spaces in the following terms.
\end{remark}

\begin{theorem}\label{theo:maintheotensornosimc0case}
Let $X$ be a Banach space and let $L$ be a locally compact Hausdorff space. Assume that $L'$ is infinite is infinite and let $Y$ be a Banach space satisfying one of the following conditions:
    \begin{itemize}
        \item[\upshape(a)] Every convex combination of non-empty relatively weakly open subsets of $B_Y$ intersects $S_Y$.
        \item[\upshape(b)] There exists $y^*\in S_{Y^*}$ such that $\acconv\{y\in B_Y\colon y^*(y)=1\}=B_Y$.
    \end{itemize}
    Then, $C_0(L,X)\pten Y$ has the weak-DD2P.
\end{theorem}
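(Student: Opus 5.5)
The plan is to repeat the proof of Theorem~\ref{theo:maintheotensornosim} verbatim, with $A(\Omega,X)$ replaced by $C_0(L,X)$ and Lemma~\ref{lemma:engenurysohn} replaced by Urysohn's lemma, as in Proposition~\ref{prop:C0(L,X)}. The first step is a $C_0$-analogue of Proposition~\ref{prop:superDeltaTensor}. Let $u=\sum_{i=1}^n\lambda_i f_i\otimes y_i$ with $f_i\in S_{C_0(L,X)}$, $y_i\in S_Y$, $\lambda_i\ge 0$ summing to $1$, and $\|\sum_i\lambda_i y_i\|=1$. Suppose there are $t_0\in L'$ and $x_0\in S_X$ with $f_i(t_0)=x_0$ for every $i$. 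Then $u$ is a super $\Delta$-point.

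To prove this analogue, pick $x^*\in S_{X^*}$ with $x^*(x_0)=1$. Since $t_0$ is a cluster point, we can choose distinct points $t_k$ and pairwise disjoint open sets $U_k\ni t_k$ on which $|x^*(f_i(\cdot))-1|<1/k$ for all $i$. We may take $U_k$ relatively compact, which is possible since $L$ is locally compact. Let $g_k\colon L\to[0,1]$ be a compactly supported Urysohn function with $g_k(t_k)=1$ and $g_k=0$ off $U_k$. Put
\[
f_{i,k}:=(1-g_k)f_i-g_kx_0.
\]
Then $\|f_{i,k}\|\le1$ pointwise, since $(1-g_k)\|f_i\|+g_k\le 1$. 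Also $f_{i,k}-f_i=-g_k(f_i+x_0)$. The sequence $(g_k)$ has disjoint supports, so it is equivalent to the $c_0$-basis, hence weakly null; multiplying by fixed bounded functions preserves this via the operator $c_0\to C_0(L,X)$, $e_k\mapsto g_kh$. Consequently $\sum_i\lambda_i f_{i,k}\otimes y_i\to u$ weakly. For the distance, take $y^*\in S_{Y^*}$ with $y^*(\sum_i\lambda_iy_i)=1$, so that $y^*(y_i)=1$ for all $i$. Testing against the functional $(x^*\circ\delta_{t_k})\otimes y^*$, which has norm at most one, gives
\[
\|u-u_k\|\ge\bigl|\textstyle\sum_i\lambda_i x^*(f_i(t_k))+1\bigr|\ge 2-1/k.
\]

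The main step follows Theorem~\ref{theo:maintheotensornosim}. Since $L'$ is infinite, pick distinct $t_k\in L'$ and disjoint open sets $U_k\ni t_k$, with Urysohn functions $g_k$ as above and a fixed $x_0\in S_X$. Given $u$ of the reduced form in $W$, set
\[
f_{i,k}:=(1-g_k)f_i+g_kx_0.
\]
This lies in $B_{C_0(L,X)}$, converges weakly to $f_i$, and satisfies $f_{i,k}(t_k)=x_0$ for all $i$. For large $k$ the tensor $\sum_i\lambda_i f_{i,k}\otimes y_i$ therefore lies in $W$, and it is a super $\Delta$-point by the analogue above applied at $t_k\in L'$. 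Note that each $f_{i,k}$ has norm one, because its value at $t_k$ is $x_0$.

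The remaining task is producing such a $u$ in $W$ under (a) or (b), and here the earlier arguments are copied word for word. They use only $B_{Z\pten Y}=\cconv(S_Z\otimes S_Y)$ together with hypothesis (a) or (b) on $Y$, and nothing specific to the first factor. In case (b), absolute convex combinations can be absorbed by moving unimodular scalars onto the $f_i$. In the real case $\mathbb K=\mathbb R$ the arguments are identical. The main point to verify carefully is the weak convergence in the $C_0$ setting, which the $c_0$-embedding argument handles. There is no real obstacle; the work is bookkeeping.
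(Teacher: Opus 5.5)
Your proposal is correct and takes the same route as the paper, which obtains this theorem by rerunning the proofs of Proposition~\ref{prop:superDeltaTensor} and Theorem~\ref{theo:maintheotensornosim} with compactly supported Urysohn functions in place of Lemma~\ref{lemma:engenurysohn}, as in Proposition~\ref{prop:C0(L,X)}. You also spell out two points the paper leaves implicit: the Urysohn functions need compact support so that $f_{i,k}\in C_0(L,X)$, and the super $\Delta$-point criterion is applied at the cluster point $t_k$ that depends on $k$.
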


Before turning to the symmetric projective tensor product, we illustrate in the following remark several examples of Banach spaces $Y$ for which Theorem~\ref{theo:maintheotensornosim} applies.

\begin{remark}\label{remark:examaplitensornosi} Let us present some comments on the results above.
\begin{itemize}
    \item [\upshape(a)] Following the notation of \cite{lmr}, a Banach space $X$ is said to have the \textit{property (CS)} if every convex combination of slices of $B_X$ intersects $S_X$. It was proved in \cite[Theorem 3.4]{lmr} that a Banach space $X$ has property (CS) if, and only if, given any convex combination of slices $C$ of $B_X$ there are $x,y\in C$ such that $\Vert x-y\Vert=2$. It is known that the fact that every convex combination of slices of the unit ball has diameter 2 does not imply the property (CS) \cite[Example 3.3]{lmr}. Examples of Banach space satisfying the property CS are $L_1([0,1])$ \cite[Example 3.2]{al18}, $C(K)$ for any infinite compact space $K$ or square Banach spaces from \cite{acllrz23}. For background about spaces with property (CS) and properties around it we refer the reader to \cite{al18,hkp2,lmr}.

Given a Banach space $X$, it is known, by a result of Bourgain, that every non-empty relatively weakly open subset of $B_X$ contains a convex combination of slices of $B_X$ (cf.\ e.g.\ \cite[Lemma~2.2]{MPR}). Consequently, the assumption described in part~(a) of Theorem~\ref{theo:maintheotensornosim} is precisely the so-called property~(CS) of the space $Y$.

    \item [\upshape(b)] Given a Banach space $X$ and an element $z\in S_X$, we say that $z$ is a \emph{spear vector} if
\[
\max_{\theta\in\mathbb{T}} \|z+\theta x\| = 1+\|x\|
\quad \text{for all } x\in X.
\]
Moreover, when dealing with dual spaces, it is known that an element $z^*\in S_{X^*}$ is a spear vector in $X^*$ if and only if
\[
B_X = \overline{\aconv}\{ x\in B_X \colon z^*(x)=1\}
\]
(see, for instance, \cite[Theorem~2.9]{SpearsBook}). Consequently, condition~(b) in Theorem~\ref{theo:maintheotensornosim} is equivalent to the existence of a spear vector in $Y^*$.

Typical examples of Banach spaces whose duals admit spear vectors include almost CL-spaces and separable lush spaces. For definitions and further background on this topic, we refer the reader to \cite{SpearsBook}.

    \item [\upshape(c)] Let us observe that the properties described in parts~(a) and~(b) above are independent. Indeed, consider first the space $X=\ell_1$. On the one hand, $\ell_1$ fails property~(CS), since its unit ball contains slices of arbitrarily small diameter. On the other hand, we have
\[
\overline{\aconv}\{y\in \ell_1 \colon \mathbf{1}(y)=1\}=B_{\ell_1},
\]
where $\mathbf{1}$ denotes the sequence $(1,1,1,\ldots)\in \ell_\infty=\ell_1^*$.

Conversely, consider the space $X=L_1([0,1],\ell_2)$. By \cite[Corollary~4.22]{SpearsBook} and the fact that $\ell_2$ does not contain any spear vector, there exists no functional $x^*\in X^*$ which is a spear vector. Nevertheless, $X$ satisfies property~(CS). Indeed, let
\[
C=\sum_{i=1}^n \lambda_i S(B_X,g_i,\alpha)
\]
be a convex combination of slices of $B_X$, where $g_i\in X^*=L_\infty([0,1],\ell_2)$ and $\|g_i\|=1$ for each $i$. Then there exist pairwise disjoint measurable subsets $B_1,\ldots,B_n\subseteq [0,1]$ such that
\[
B_i\subseteq \{t\in[0,1]\colon \|g_i(t)\|>1-\alpha\}
\quad\text{and}\quad \mu(B_i)>0,
\]
for every $i\in\{1,\ldots,n\}$. By \cite[Theorem~I.4.2]{DGZ}, there exists a Borel mapping
\[
D_\infty\colon \ell_2\to\ell_2
\]
such that $\langle D_\infty(y),y\rangle=\|y\|^2=\|D_\infty(y)\|^2$ for all $y\in B_{\ell_2}$. For each $i\in\{1,\ldots,n\}$, define
\[
\widetilde g_i(t)=
\begin{cases}
\dfrac{g_i(t)}{\|g_i(t)\|}, & t\in B_i,\\[1ex]
0, & t\notin B_i.
\end{cases}
\]
Then $D_\infty\circ \widetilde g_i\in L_1([0,1],\ell_2)$ and $\|D_\infty\circ \widetilde g_i\|=\mu(B_i)$. Setting
\[
f_i:=\frac{1}{\mu(B_i)}(D_\infty\circ \widetilde g_i),
\]
we obtain
\begin{align*}
\langle f_i,g_i\rangle
&=\frac{1}{\mu(B_i)}\int_{B_i}\|g_i(t)\|
   \langle D_\infty(\widetilde g_i(t)),\widetilde g_i(t)\rangle\,dt \\
&=\frac{1}{\mu(B_i)}\int_{B_i}\|g_i(t)\|\,dt
>1-\alpha.
\end{align*}
Hence, $\sum_{i=1}^n \lambda_i f_i\in C$, and moreover,
\[
\Bigl\|\sum_{i=1}^n \lambda_i f_i\Bigr\|=1,
\]
since the supports of the functions $f_i$ are pairwise disjoint by construction. Therefore, $C\cap S_X\neq\emptyset$, and thus $X$ has property~(CS).

\end{itemize}
\end{remark}

We now turn to projective symmetric tensor products, for which some additional notation will be required.

For $N\in\mathbb{N}$, we denote by $\mathcal{P}(^N X)$ the Banach space of $N$-homogeneous scalar-valued continuous polynomials on a Banach space $X$. Endowed with the norm
\[
\|P\|=\sup_{x\in B_X} |P(x)|, \qquad P\in\mathcal{P}(^N X),
\]
the space $\mathcal{P}(^N X)$ becomes a normed space.

Given a Banach space $X$, the \emph{$N$-fold projective symmetric tensor product} of $X$, denoted by $\widehat{\otimes}_{\pi,s,N} X$, is defined as the completion of the symmetric tensor product $\otimes^{s,N}X$ with respect to the norm
\[
\|u\|:=\inf\left\{
\sum_{i=1}^n |\lambda_i|\,\|x_i\|^N \colon
u=\sum_{i=1}^n \lambda_i x_i^N,\ n\in\mathbb{N},\ x_i\in X
\right\}.
\]
It is well known that
\[
B_{\widehat{\otimes}_{\pi,s,N} X}
=\overline{\aconv}\bigl(\{x^N \colon x\in S_X\}\bigr)
\quad\text{and}\quad
\bigl(\widehat{\otimes}_{\pi,s,N} X\bigr)^*=\mathcal{P}(^N X)
\]
(see \cite{flo} for background).

For further information on projective symmetric tensor products in connection with diameter two properties and related notions, we refer the reader to
\cite{ab,blrasq,llr,MartinRueda-symm}.

We begin by exhibiting a class of elements in the unit ball of $\widehat{\otimes}_{\pi,s,N} C(K)$ which are super $\Delta$-points.

\begin{proposition}\label{prop:superDeltasymmetrictensor}
Let $K$ be a compact Hausdorff topological space and let $N\in \N$. Pick $u=\sum_{i=1}^n \lambda_i f_i^N\in \widehat{\otimes}_{\pi,s,N}C(K)$ with $\lambda_1,\ldots,\lambda_n\geq 0$ and $\sum_{i=1}^n \lambda_i=1$. If there exists $t_0\in K'$ such that $$f_i(t_0)=1=\norm{f_i}, \quad \forall i\in \{1,\ldots,n\},$$
    then $u$ is a super $\Delta$-point.
\end{proposition}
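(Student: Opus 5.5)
We follow the scheme of Proposition~\ref{prop:superDeltaTensor}, with Urysohn functions in place of Lemma~\ref{lemma:engenurysohn} and $N$-homogeneous polynomials of evaluation type in place of the functionals $T\otimes y^*$.

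Fix $\varepsilon>0$ and a relatively weakly open set $W\subseteq B_{\widehat{\otimes}_{\pi,s,N}C(K)}$ containing $u$. Since each $f_i$ is continuous with $f_i(t_0)=1$ and $t_0\in K'$, we can choose distinct points $t_k\to t_0$ in the net sense. Concretely, we take a sequence of distinct points $t_k$ and pairwise disjoint open sets $U_k\ni t_k$ such that
\[
|f_i(t)-1|<\tfrac1k \quad \text{for all } t\in U_k \text{ and all } i.
\]
This uses only that $t_0$ is a cluster point and $K$ is Hausdorff; the construction is the same as in Proposition~\ref{prop:superDeltasubalgebraX}.

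Let $g_k\colon K\to[0,1]$ be Urysohn functions with $g_k(t_k)=1$ and $g_k=0$ off $U_k$. Define
\[
f_{i,k}:=(1-g_k)f_i-g_k .
\]
Then $\|f_{i,k}\|\le 1$ and $f_{i,k}(t_k)=-1$. Since the $g_k$ have disjoint supports, $(g_k)_k$ is equivalent to the $c_0$-basis, so $g_k\rightharpoonup0$ and hence $f_{i,k}\rightharpoonup f_i$.

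The key step, and the main obstacle, is to pass weak convergence through the nonlinear map $f\mapsto f^N$ into $\widehat{\otimes}_{\pi,s,N}C(K)$. Weak convergence is not preserved under $N$-th powers in general, so we exploit the special structure $f_{i,k}=f_i-g_k(1+f_i)$. Expanding $f_{i,k}^N$ by the symmetric multinomial formula, every term other than $f_i^N$ is a symmetrization of
\[
f_i^{\,N-j}\otimes\bigl(g_k(1+f_i)\bigr)^{\otimes j},\qquad j\ge1 .
\]
For $j=1$ this equals the image of $g_k(1+f_i)$ under the bounded linear map $h\mapsto$ symmetrization of $f_i^{N-1}\otimes h$. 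Since $(g_k(1+f_i))_k$ is again equivalent to a $c_0$-basis, the $j=1$ term is weakly null.

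For $j\ge2$ we test against a polynomial $P\in\mathcal P(^N C(K))$ with associated symmetric $N$-linear form $\check P$. The sequence of values
\[
\check P\bigl(f_i,\dots,f_i,h_k,\dots,h_k\bigr),\qquad h_k:=g_k(1+f_i),
\]
is handled by a polarization / $c_0$-summing argument. The $h_k$ have disjoint supports and uniformly bounded norm, so $\bigl\|\sum_k\theta_k h_k\bigr\|\le 2$ for all signs $\theta_k$. Evaluating $\check P$ with the last $j$ entries equal to $\sum_{k\le m}\theta_k h_k$ and averaging over the signs $\theta_k$ yields the bound
\[
\sum_{k\le m}\bigl|\check P(f_i^{N-j},h_k^{j})\bigr|\le C\|P\| ,
\]
and the diagonal sum is what survives the averaging because off-diagonal terms cancel. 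This gives $\check P(f_i^{N-j},h_k^j)\to0$. (Alternatively, one can invoke the known fact that bounded $c_0$-like sequences are weakly null in $\widehat{\otimes}_{\pi,s,N}C(K)$, since $C(K)$ has the Dunford--Pettis property and polynomials on $C(K)$ are weakly sequentially continuous on such sequences.) Consequently
\[
u_k:=\sum_{i=1}^n\lambda_i f_{i,k}^N\rightharpoonup u ,
\]
so $u_k\in W$ for all large $k$, and each $u_k\in B$.

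Finally, fix $k$ large with $1/k<\varepsilon/N$ and test against the norm-one polynomial $P(f)=f(t_k)^N$. This gives
\[
\|u-u_k\|\ \ge\ \Bigl|\sum_i\lambda_i f_i(t_k)^N-\sum_i\lambda_i(-1)^N\Bigr| .
\]
Since $f_i(t_k)^N$ is within $N/k$ of $1$, the right-hand side is at least $2-\varepsilon$ when $N$ is odd. When $N$ is even this test gives nothing, because $(-1)^N=1$. In that case we replace $-g_k$ by $\omega g_k$ with $\omega^N=-1$ in the complex case. In the real case with $N$ even we instead use
\[
f_{i,k}:=(1-g_k)f_i ,
\]
so that $f_{i,k}(t_k)=0$. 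This only gives distance about $1$, so for real even $N$ we would need a different test polynomial. We would try $P(f)=\tfrac12\bigl(f(t_k)^N-f(s)^N\bigr)$ with a suitable second point, or use $f^{N}$ evaluated against the $\Delta$-point characterisation from \cite{MartinRueda-symm}. This parity issue is the secondary difficulty to check.
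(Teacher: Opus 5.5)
\textbf{Gap: real scalars with $N$ even.} Your proposal does not cover real scalars with $N$ even (already $N=2$). The statement includes this case, since nothing here restricts $C(K)$ to be complex. For real even $N$, the test polynomial $f\mapsto f(t_k)^N$ cannot tell $f_{i,k}(t_k)=-1$ apart from $f_i(t_k)\approx 1$. Your tentative replacement $f\mapsto f(t_k)^N-f(s)^N$ does not help either: on $B_{C(K)}$ both terms lie in $[0,1]$, so it only gives a lower bound of about $1$ for $\|u-u_k\|$.

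The missing idea is to test with the mixed norm-one polynomial $f\mapsto f(s)f(r)^{N-1}$ at two distinct points $r,s$ close to $t_0$. Because it is linear in $f(s)$, it detects the sign flip whatever the parity of $N$ and the scalar field. This is what the paper does:
\begin{itemize}
\item it splits the Urysohn functions into two interlaced families $\phi_k=\varphi_{2k}$, $\psi_k=\varphi_{2k+1}$ peaking at $r_k$, $s_k$;
\item it sets $f_{i,k}=(1-\phi_k)(1-\psi_k)f_i+\phi_k-\psi_k$, so that $f_{i,k}(r_k)=1$ and $f_{i,k}(s_k)=-1$;
\item it bounds $\|u-u_k\|$ from below by $|\langle\delta_{s_k}\delta_{r_k}^{N-1},u-u_k\rangle|\geq 2-\varepsilon$.
\end{itemize}
Your own functions $f_{i,k}=(1-g_k)f_i-g_k$ would also do. Since $g_k(t_{k+1})=0$, testing with $P(f)=f(t_k)f(t_{k+1})^{N-1}$ gives
\[
|\langle P,u-u_k\rangle|=\Bigl|\sum_{i=1}^n\lambda_i f_i(t_{k+1})^{N-1}\bigl(1+f_i(t_k)\bigr)\Bigr|\geq 2-\frac{2N}{k}.
\]
This bound holds uniformly in $N$ and in $\K$, so neither the $\omega$-trick nor the parity case distinction is needed. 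The $\omega$-trick is correct for complex scalars, but it cannot cover the real even case.

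\textbf{The weak-convergence step.} This step is fine if you use your ``alternative'', which is exactly the paper's argument. $C(K)$ has the Dunford--Pettis property, hence by Ryan's theorem the polynomial Dunford--Pettis property. Therefore $f_{i,k}\rightharpoonup f_i$ gives $f_{i,k}^N\rightharpoonup f_i^N$ in $\widehat{\otimes}_{\pi,s,N}C(K)$, so $u_k\in W$ for large $k$.

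Your direct sign-averaging sketch is not correct as stated. Averaging $\check P\bigl(f_i^{N-j},(\sum_k\theta_k h_k)^{j}\bigr)$ over signs kills every term, including the diagonal ones, when $j\geq 3$ is odd. For $j\geq 4$, the off-diagonal pairings such as $k_1=k_2\neq k_3=k_4$ survive. So the sketch only handles $j=2$.

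If you want a self-contained argument for this step, restrict the $j$-homogeneous polynomial $h\mapsto\check P(f_i^{N-j},h^j)$ to $\overline{\mathrm{span}}\{h_k\}$. This subspace is isomorphic to $c_0$, because the $h_k$ are disjointly supported with $1\leq\|h_k\|\leq 2$. Then use the classical fact that polynomials on $c_0$ are weakly sequentially continuous.
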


\begin{proof}
     Fix $\varepsilon>0$ and a weak open set $W\subseteq B_{\widehat{\otimes}_{\pi,s,N}C(K)}$ with $u\in W$. Since $K$ is Hausdorff, $t\in K'$ and $f_i(t)=1$ for all $i\in \{1,\ldots,n\}$, we can find a sequence $(t_k)_k\subseteq K$ with $t_k\neq t_j$ for all $k\neq j$ and satisfying  $|f_i(t_k)-1|<\frac{1}{k}$ for all $k\in \N$ and for all $i\in \{1,\ldots,n\}$. Using again that $K$ is Hausdorff, we can find a sequence $(V_k)_k$ formed of pairwise disjoint open sets in $K$ such that $t_k\in V_k$ for all $k\in \N$. Furthermore, up to replacing $V_k$ with $ \left(\bigcap_{i=1}^n \left\{t\in K\colon |f_i(t)-1|<\frac{1}{k}\right\}\right)\cap V_k$ which is still open and non-empty, we may assume that for each $k\in \N$ we have $$|f_i(t)-1|<\frac{1}{k}, \quad  \forall t\in V_k,\ \forall i\in \{1,\ldots,n\}.$$ Now, take a sequence of Urysohn functions $(\varphi_k)_k$ satisfying $\varphi_k(K)\subseteq[0,1]$,  $\varphi_k(t_k)=1$ and $\varphi_k|_{K\setminus V_k}=0$, for each $k\in \N$. Next, consider the following sequences, formed by splitting the previous ones. Write, for each $k\in \N$,  \begin{align*}
         \widetilde{U}_k&:=V_{2k}, &\quad \phi_k&:=\varphi_{2k}, &\quad r_k&:=t_{2k},
         \\ \widetilde{V}_k&:=V_{2k+1}, &\quad \psi_k&:=\varphi_{2k+1}, &\quad s_k&:=t_{2k+1}
     \end{align*}
     Now, define
     \begin{equation*}
         f_{i,k}:=(1-\phi_k)(1-\psi_k)f_i+\phi_k-\psi_k\in C(K), \quad \forall k\in \N, \ \forall i\in \{1,\ldots,n\}.
     \end{equation*}
     Notice that for all $k\in \N$ and for all $i\in \{1,\ldots,n\}$, we have $$|f_{i,k}(t)|\leq (1-\phi_k(t))|f(t)|+\phi_k(t)\leq 1, \quad \forall t\in K\setminus \widetilde{V}_k,$$ and
     $$|f_{i,k}(t)|  \leq (1-\psi_k(t))|f(t)|+\psi_k(t)\leq 1, \quad \forall t\in  \widetilde{V}_k.$$
    Thus, $f_{i,k}\in B_{C(K)}$ for all $k\in \N$ and for all $i\in \{1,\ldots,n\}$. Finally, the pairwise disjointness of the sets $V_k$ (and hence of the sets $\widetilde{U}_k$ and $\widetilde{V}_k$) implies that, for each $i\in\{1,\ldots,n\}$, the sequence $(f_{i,k})_k$ converges weakly to $f_i$. Furthermore, since $C(K)$ has the Dunford-Pettis property, it has the polynomial Dunford-Pettis property (see \cite[Theorem 2.1]{ryan79}) and so  $\left(\sum_{i=1}^n \lambda_if_{i,k}^N\right)_k$ converges weakly to $u$. Thus, there exists $m\in \N$ such that $$\sum_{i=1}^n \lambda_i f_{i,k}^N\in W, \quad \forall k\geq m.$$ Pick $k_0\in \N$ with $k_0\geq m$ and $\frac{1}{k_0}<\frac{\varepsilon}{N}$. For each $i\in \{1,\ldots,n\}$, we obtain, on the one hand, \begin{equation}\label{Eq7}
        |f_i(r_{k_0})-1|<\frac{1}{k_0}<\frac{\varepsilon}{N} \quad \text{ and } \quad |f_i(s_{k_0})-1|<\frac{1}{k_0}<\frac{\varepsilon}{N},
    \end{equation}
    and on the other hand,
    \begin{equation}\label{Eq8}
        f_{i,k_0}(r_{k_0})=1 \quad \text{ and } \quad f_{i,k_0}(s_{k_0})=-1.
    \end{equation}  Finally, writing $u_0=\sum_{i=1}^n \lambda_i f_{i,k_0}^N$, and using (\ref{Eq7}) and (\ref{Eq8}), we conclude
    \begin{align*}
        \norm{u-u_0}&\geq |(\delta_{s_{k_0}} \delta_{r_{k_0}}^{N-1})(u-u_0)|\\&=\left|\sum_{i=1}^n \lambda_if_i(s_{k_0}) f_i(r_{k_0})^{N-1}-\sum_{i=1}^n \lambda_if_{i,k_0}(s_{k_0}) f_{i,k_0}(r_{k_0})^{N-1} \right|\\&=\left|\sum_{i=1}^n \lambda_if_i(s_{k_0}) f_i(r_{k_0})^{N-1}+1\right|\\&\geq 2-\left|\sum_{i=1}^n \lambda_if_i(s_{k_0}) f_i(r_{k_0})^{N-1}-1\right| \\&\geq2-\sum_{i=1}^n \lambda_i|f_i(s_{k_0}) f_i(r_{k_0})^{N-1}-1|\\&\geq  2- \sum_{i=1}^n \lambda_i (|f_i(s_{k_0}) -1|+(N-1)|f_i(r_{k_0}) -1|)\geq 2-\varepsilon,
    \end{align*}
    where $\delta_{s_{k_0}} \delta_{r_{k_0}}^{N-1}$ is the norm-one functional in $\widehat{\otimes}_{\pi,s,N}C(K)$ given by $(\delta_{s_{k_0}} \delta_{r_{k_0}}^{N-1})(f^N)=f(s_{k_0})f(r_{k_0})^{N-1}$ for all $f\in C(K)$. Thus, $u$ is a super $\Delta$-point as desired.
\end{proof}

With the previous result in mind we show that the weak-DD2P is inherited by symmetric projective tensor products of $C(K)$ spaces.

\begin{theorem}\label{theo:proyectivosimetrico}
    Let $K$ be a compact Hausdorff topological space, and let $N\in \N$. If $K'$ is infinite, then $\widehat{\otimes}_{\pi,s,N} C(K)$ has the weak-DD2P.
\end{theorem}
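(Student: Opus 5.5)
We plan to show that every non-empty relatively weakly open subset $W$ of $B_{\widehat{\otimes}_{\pi,s,N} C(K)}$ contains an element of the form required by Proposition~\ref{prop:superDeltasymmetrictensor}. The first step is a reduction to nice elements of $W$. Since $B_{\widehat{\otimes}_{\pi,s,N} C(K)}=\acconv\{f^N\colon f\in S_{C(K)}\}$ and $W$ is norm open, there is some $u=\sum_{i=1}^n \mu_i f_i^N\in W$ with $\sum|\mu_i|\leq 1$ and $f_i\in S_{C(K)}$. We may absorb the scalars: $\mu_i f_i^N=\lambda_i(\theta_i f_i)^N$ with $\lambda_i=|\mu_i|$ and $\theta_i^N=\operatorname{sign}\mu_i$ in the complex case. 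In the real case with $N$ even, a negative coefficient cannot be absorbed. We therefore keep signs $\sigma_i\in\{\pm1\}$ and aim at $u=\sum\sigma_i\lambda_i f_i^N$.

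If $\sum\lambda_i<1$, we first push $u$ towards the sphere while staying in $W$. One option is to add $\lambda_0 g^N$ with $\lambda_0=1-\sum\lambda_i$ and $g$ ranging over a weakly null net in $B_{C(K)}$. Then $g^N$ should be weakly null by the polynomial Dunford--Pettis property of $C(K)$, exactly as it is used in the proof of Proposition~\ref{prop:superDeltasymmetrictensor}. For instance, $g=\varphi_k$ for Urysohn bumps on disjoint open sets works, since $\varphi_k^N$ is again such a bump and hence weakly null. This yields $u'=\sum_{i=0}^n\lambda_i f_i^N\in W$ with $\sum\lambda_i=1$.

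The second step perturbs the $f_i$ so that they all take the value $1$ at a common cluster point, keeping weak convergence. Choose distinct $t_k\in K'$ with pairwise disjoint neighbourhoods $U_k$ and Urysohn functions $g_k$ with $g_k(t_k)=1$ and support in $U_k$. Set $f_{i,k}=(1-g_k)f_i+g_k$, as in the proof of Proposition~\ref{prop:denseC(K)}. Then $f_{i,k}\in B_{C(K)}$, $f_{i,k}(t_k)=1=\|f_{i,k}\|$, and $f_{i,k}\rightharpoonup f_i$ by Rainwater's theorem. By the polynomial Dunford--Pettis property \cite[Theorem 2.1]{ryan79}, $f_{i,k}^N\rightharpoonup f_i^N$ weakly in $\widehat{\otimes}_{\pi,s,N}C(K)$. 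Hence $u_k=\sum\lambda_i f_{i,k}^N\to u'$ weakly, so $u_k\in W$ for large $k$, and $u_k$ is a super $\Delta$-point by Proposition~\ref{prop:superDeltasymmetrictensor} applied with $t_0=t_k\in K'$.

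The main obstacle is the sign issue in the real case with $N$ even, where a term $-\lambda_i f_i^N$ cannot be written as $\lambda_i h^N$. To handle it, I would first adapt Proposition~\ref{prop:superDeltasymmetrictensor} to an element $\sum\sigma_i\lambda_i f_i^N$ with all $f_i(t_0)=1$ and $\sigma_i=\pm1$. Such an element may have norm smaller than $1$, so the adaptation is not automatic. To avoid this, I would choose the perturbation differently: insert the extra summand $\lambda_0 g_k^N$ with $g_k$ a bump at a cluster point $t_k$. I would then try to show that the combination $\sum\sigma_i\lambda_i f_{i,k}^N+\lambda_0 g_k^N$ is a super $\Delta$-point by rerunning the proof of Proposition~\ref{prop:superDeltasymmetrictensor} with the test functional $\delta_{s}\delta_{r}^{N-1}$ localised near $t_k$. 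If this fails, I would check whether the closed convex hull of $\{f^N\colon f\in S_{C(K)}\}$ is already weakly dense in the ball, using that every $-f^N$ is a weak limit of elements $h^N$. This density is plausible because $f^N+(\text{odd-bump correction})$ can be weakly approximated, which would reduce to the positive case above.
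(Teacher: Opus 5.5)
Your Steps 1 and 2 are correct and essentially coincide with the paper's argument. You absorb the phases, perturb every $f_i$ by a bump so that all of them peak at a common $t_k\in K'$, pass to $N$-th powers via the polynomial Dunford--Pettis property, and invoke Proposition~\ref{prop:superDeltasymmetrictensor}. You are in fact more careful than the paper. It uses $\frac{\lambda_i}{|\lambda_i|}(1-\varphi_k)f_i$ and then claims $f_{i,k}^N\rightharpoonup\frac{\lambda_i}{|\lambda_i|}f_i^N$. That needs an $N$-th root of $\frac{\lambda_i}{|\lambda_i|}$, and it breaks down exactly for real scalars and even $N$.

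That case is where your proposal has a genuine gap: you do not close it, and neither suggested route works. When all functions peak at one point, $\delta_s\delta_r^{N-1}$ only certifies $\|u-u_0\|\gtrsim 2\bigl|\sum_i\sigma_i\lambda_i\bigr|$. Adding $\lambda_0 g_k^N$ peaking at the same point does not change this, and such mixed-sign elements may even have norm $<1$.

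The density fallback is false. The set $\overline{\conv}\{h^N\colon h\in S_{C(K)}\}$ is convex and norm closed, hence weakly closed. The norm-one functional $\delta_t^N\colon h\mapsto h(t)^N$ is $\geq 0$ on it, while $\delta_t^N(-f^N)=-1$ if $f(t)=1$. So $-f^N$ is not a weak limit of $N$-th powers. Worse, take $f,g\in S_{C(K)}$ with disjoint supports and $f(t)=1=g(s)$. Then $\frac12(f^N-g^N)$ has the weak neighbourhood $\{\delta_s^N<-\frac14,\ \delta_t^N>\frac14\}$, which misses both $\overline{\conv}\{h^N\}$ and $-\overline{\conv}\{h^N\}$. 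So genuinely mixed-sign super $\Delta$-points are needed.

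The missing idea is to let the two signs peak at \emph{different} cluster points and to norm with $\delta_t^N-\delta_s^N$. This functional has norm one precisely because the scalars are real and $N$ is even.

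Write $u=\sum_i\lambda_if_i^N-\sum_j\mu_jg_j^N\in W$ with $\sum_i\lambda_i+\sum_j\mu_j=1$; your padding step works here. Pick distinct $t_k,s_k\in K'$ with disjoint bumps $\phi_k,\psi_k$, and set $f_{i,k}=(1-\phi_k-\psi_k)f_i+\phi_k$ and $g_{j,k}=(1-\phi_k-\psi_k)g_j+\psi_k$. As before, $v=\sum_i\lambda_if_{i,k}^N-\sum_j\mu_jg_{j,k}^N\in W$ for large $k$. Now $f_{i,k}(t_k)=1$, $f_{i,k}(s_k)=0$, $g_{j,k}(t_k)=0$ and $g_{j,k}(s_k)=1$.

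To see that $v$ is a super $\Delta$-point, choose distinct points $x_m$ near $t_k$ and $y_m$ near $s_k$ (possible as $t_k,s_k\in K'$). Give them pairwise disjoint neighbourhoods on which these four values hold up to $1/m$. Take bumps $\alpha_m,\beta_m$ there and swap the roles: $a_{i,m}=(1-\alpha_m-\beta_m)f_{i,k}+\beta_m$ and $b_{j,m}=(1-\alpha_m-\beta_m)g_{j,k}+\alpha_m$. Then $w_m=\sum_i\lambda_ia_{i,m}^N-\sum_j\mu_jb_{j,m}^N$ lies in the unit ball. It converges weakly to $v$, since the differences are bounded and pointwise null, and the polynomial Dunford--Pettis property applies. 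Finally, $(\delta_{x_m}^N-\delta_{y_m}^N)(v)\geq 1-\frac{N}{m}-m^{-N}$ while $(\delta_{x_m}^N-\delta_{y_m}^N)(w_m)=-1$, so $\|v-w_m\|\to 2$.
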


\begin{proof}
     Let $W\subseteq B_{\widehat{\otimes}_{\pi,s,N} C(K)}$ be a non-empty weak open set and we are going to find some super $\Delta$-point in $W$. Since $B_{\widehat{\otimes}_{\pi,s,N} C(K)}=\acconv \{f^N \colon f\in S_{C(K)}\}$, it follows that there is some $u=\sum_{i=1}^n \lambda_if_i^N\in W$, where  $\sum_{i=1}^n |\lambda_i|=1$ (we may assume that $\lambda_1,\ldots, \lambda_n \neq 0$) and $f_1,\ldots, f_n\in S_{C(K)}$. Now, take a sequence $(t_k)_k\subseteq K'$ with $t_k\neq t_j$ for all $k\neq j$, and find a sequence $(U_k)_k$ of pairwise disjoint open sets in $K$ such that $t_k\in U_k$ for all $k\in \N$. Next, define for each $k\in \N$, a Urysohn function $\varphi_k \colon K\rightarrow[0,1]$ satisfying $\varphi_k(t_k)=1$ and $\varphi_k|_{K\setminus U_k}=0$. Hence, consider for each $i\in \{1,\ldots,n\}$ the sequence of functions $(f_{i,k})_k$ given by $$f_{i,k}=\frac{\lambda_i}{|\lambda_i|}(1-\varphi_k)f_i+\varphi_k\in C(K).$$
    It is easy to see that $\norm{f_{i,k}}\leq 1$ for all $k\in \N$ and for all $i\in \{1,\ldots,n\}$. Furthermore, 
    $$ f_{i,k}(t_k)=1, \quad \forall k\in \N, \ \forall i\in \{1,\ldots,n\}.$$
    Since the sets $U_k$ are pairwise disjoint, it is clear that $(f_{i,k})_k$ converges weakly to $\frac{\lambda_i}{|\lambda_i|}f_i$ for each $i\in \{1,\ldots,n\}$. By the same argument using the Dunford-Pettis property in $C(K)$ of Proposition~\ref{prop:superDeltasymmetrictensor}, we get that $(f_{i,k}^N)$ converges weakly to $\frac{\lambda_i}{\vert \lambda_i\vert}f_i^N$ in $\widehat{\otimes}_{\pi,s,N} C(K)$. Thus, there exists $k_0\in \N$ such that $z=\sum_{i=1}^n |\lambda_i| f_{i,k_0}^N \in W$. Finally, $z$ is a super $\Delta$-point by virtue of Proposition \ref{prop:superDeltasymmetrictensor}.
\end{proof}

\section{Characterization of the weak-DD2P in \texorpdfstring{$L_1(\mu,X)$}{L1(mu,X)}, \texorpdfstring{$L_\infty(\mu,X)$}{Linfty(mu,X)}, and \texorpdfstring{$C(K,X)$}{C(K,X)}}\label{section:espaclavectorval}

In this section, we aim to provide a precise description of the weak-DD2P for vector-valued versions of the spaces $L_1(\mu)$, $L_\infty(\mu)$, and $C(K)$. We begin with the characterisation of the weak-DD2P for the space $L_1(\mu,X)$.

\begin{theorem}
Let $(S,\mathcal{A},\mu)$ be a complete $\sigma$-finite measure space and let $X$ be a Banach space. Then the following statements are equivalent:
\begin{itemize}
    \item[\upshape(i)] The space $L_1(\mu,X)$ has the weak-DD2P.
    \item[\upshape(ii)] The measure $\mu$ is atomless or the Banach space $X$ has the weak-DD2P.
\end{itemize}
\end{theorem}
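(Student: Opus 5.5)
The plan is to prove the two implications separately, relying on the absolute-sum results of Subsection~\ref{subsection:absolutesums} and on Corollary~\ref{thm:Bochner-Lp}.

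For (ii)$\Rightarrow$(i) there are two cases. If $X$ has the weak-DD2P, then Corollary~\ref{thm:Bochner-Lp} with $p=1$ gives directly that $L_1(\mu,X)$ has the weak-DD2P. If $\mu$ is atomless (and non-zero), then $L_1(\mu,X)$ has the Daugavet property. This is the classical result for Bochner spaces over atomless measures. Since the Daugavet property implies the DD2P, every point of the unit sphere is a super $\Delta$-point. Finally, $L_1(\mu,X)$ is infinite-dimensional, so $S$ is weakly dense in $B$, and the weak-DD2P follows.

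For (i)$\Rightarrow$(ii) I argue by contraposition: assume $\mu$ has an atom $A$ and $X$ fails the weak-DD2P. Since $\mu$ is $\sigma$-finite, $A$ has finite positive measure. Every Bochner measurable function is a.e.\ constant on an atom, so $L_1(\mu,X)$ splits isometrically as
\[
L_1(\mu,X)=L_1(\mu|_A,X)\oplus_1 L_1(\mu|_{S\setminus A},X),
\]
and the first summand is isometric to $X$ via $x\mapsto x\chi_A/\mu(A)$. Now Proposition~\ref{prop:sumasabsolutasbaja}(a) applies with $N$ the $\ell_1$-norm, which is not the $\ell_\infty$-norm. If $L_1(\mu,X)$ had the weak-DD2P, both summands would have it, and in particular $X$ would. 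This is a contradiction. If $S\setminus A$ is null, the space is simply isometric to $X$ and the conclusion is immediate.

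The main obstacle is the atomless case of (ii)$\Rightarrow$(i), namely justifying that $L_1(\mu,X)$ is DD2P, or Daugavet, for an arbitrary $X$. I would cite the known Daugavet property of $L_1(\mu,X)$ for atomless $\mu$. Alternatively, I could write $\mu$ as a countable sum of finite atomless pieces and use Proposition~\ref{prop:superDeltaKotheBochner}-type arguments: simple functions supported on sets that split into many small pieces admit weakly close perturbations at distance almost $2$. Beyond that, the only care needed is the routine measure-theoretic verification of the $\ell_1$-decomposition at an atom.
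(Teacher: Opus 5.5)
Your proposal is correct and follows the paper's proof. For (ii)$\Rightarrow$(i) you use the same two cases: the Daugavet property of $L_1(\mu,X)$ for atomless $\mu$, and Corollary~\ref{thm:Bochner-Lp}. For (i)$\Rightarrow$(ii) you use an $\ell_1$-decomposition together with Proposition~\ref{prop:sumasabsolutasbaja}(a).

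The only difference is that you split off a single atom, so $X$ itself is an $\ell_1$-summand and the proposition applies in one step. The paper instead writes $L_1(\nu,X)\oplus_1\ell_1(I,X)$ and leaves implicit the second application needed to pass from $\ell_1(I,X)$ to $X$.
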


\begin{proof}
(ii)$\Rightarrow$(i). If $\mu$ is atomless, then $L_1(\mu,X)$ has the Daugavet property (see, for instance, \cite[Theorem 3.4.4]{DaugavetBook}, for instance), and hence it has the DD2P; in particular, it has the weak-DD2P. (Alternatively, one may apply \cite[Theorem~4.8]{MPR}.) If, on the other hand, $X$ has the weak-DD2P, then the conclusion follows from Corollary~\ref{thm:Bochner-Lp}.

(i)$\Rightarrow$(ii). It is a folklore result, as a consequence of the decomposition of a positive measure into its atomic and non-atomic parts (see \cite[Theorem~2.1]{Johnson}), that
\[
L_1(\mu,X)\cong L_1(\nu,X)\oplus_1 \ell_1(I,X),
\]
where $\nu$ is atomless (possibly null) and $I$ is a (possibly empty) index set. By Proposition~\ref{prop:sumasabsolutasbaja}, if $I\neq\emptyset$ (that is, if $\mu$ is not atomless), then $X$ must have the weak-DD2P.
\end{proof}

Our next goal is to provide a characterisation of the weak-DD2P for the space $L_\infty(\mu,X)$. To this end, we first establish the following proposition, which shows that $\ell_\infty(I,X)$ has the weak-DD2P for every Banach space $X$ whenever the index set $I$ is infinite.

\begin{proposition}\label{prop:denselinftyX}
Let $I$ be an infinite set and let $X$ be a non-zero Banach space. Then $\ell_\infty(I,X)$ has the weak-DD2P.
\end{proposition}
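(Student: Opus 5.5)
The plan is to identify $\ell_\infty(I,X)$ with a space of vector-valued continuous functions and apply Proposition~\ref{prop:C0(L,X)}. Namely, $\ell_\infty(I,X)$ is isometrically isomorphic to $C(\beta I, X)$ when $X$ is finite-dimensional, but for general $X$ this fails, since bounded functions into $X$ need not extend continuously to $\beta I$ unless their range is relatively compact. So I would rather adapt the argument of Proposition~\ref{prop:C0(L,X)} directly, using the Stone--\v{C}ech compactification only to produce the ``bumps''.

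First, since $I$ is infinite, $(\beta I)'=\beta I\setminus I$ is infinite. Indeed, it has cardinality at least $2^{\mathfrak c}$, and it suffices to split $I$ into countably many disjoint infinite pieces $I_n$. Pick pairwise disjoint infinite subsets $I_n\subseteq I$ and set $g_n:=\chi_{I_n}\in\ell_\infty(I)$. The sequence $(g_n)_n$ is isometrically equivalent to the $c_0$-basis, and hence it is weakly null in $\ell_\infty(I)$. Given $f\in B_{\ell_\infty(I,X)}$ and a fixed $x_0\in S_X$, define
\[
f_n:=(1-g_n)f+g_nx_0,
\]
that is, $f_n=f$ off $I_n$ and $f_n\equiv x_0$ on $I_n$. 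Then $\|f_n\|\leq 1$, and $f_n-f=g_n(x_0-f)$. The map $(a_n)\mapsto\sum_n a_n g_n(x_0-f)$ from $c_0$ into $\ell_\infty(I,X)$ is bounded with norm at most $2$, because the supports are disjoint. It is therefore weak-to-weak continuous, and so $f_n\rightharpoonup f$.

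The main step is to show that each $f_n$ is a super $\Delta$-point. I would argue directly, mimicking Proposition~\ref{prop:superDeltasubalgebraX}. Given a weak neighbourhood $W$ of $f_n$ and $\varepsilon>0$, split $I_n$ into infinitely many pairwise disjoint infinite pieces $J_k$, and set
\[
h_k:=(1-\chi_{J_k})f_n-\chi_{J_k}x_0,
\]
so that $h_k$ equals $f_n$ off $J_k$ and equals $-x_0$ on $J_k$. By the same $c_0$-argument, $h_k-f_n=-2\chi_{J_k}x_0\rightharpoonup 0$, hence $h_k\in W$ for large $k$. Moreover $\|h_k\|\leq 1$, and
\[
\|f_n-h_k\|\geq\|x_0-(-x_0)\|=2.
\]
Thus $f_n$ is a super $\Delta$-point.

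The only delicate point is making sure that the $c_0$-domination argument legitimately gives weak convergence in $\ell_\infty(I,X)$ without any description of its dual. The bounded operator from $c_0$ handles this cleanly. In fact, the pieces $I_n$ need only be nonempty for the weak convergence, and infiniteness matters only in order to iterate the splitting in the second step. Alternatively, one could take every $I_n$ to be a single point, place $-x_0$ on other points at the super $\Delta$-step, and use the same $c_0$ trick, which works equally well since $I$ is infinite.
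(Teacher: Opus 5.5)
Your argument is correct and is essentially the paper's proof. The paper also replaces $f$ by $x_0$ on pairwise disjoint infinite blocks $A_n$, gets weak convergence from a bounded operator on $c_0$, and proves the super $\Delta$-property by flipping to $-x_0$ inside the block. It flips single points $\alpha_m\in A_n$ rather than your infinite pieces $J_k$, which makes no difference, and the detour through $\beta I$ is not needed.

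The one thing to discard is your closing ``alternative'' with singleton blocks $I_n=\{i_n\}$. There $-x_0$ must be placed at points $j_k\neq i_n$ where $f_n=f$, which only gives $\|f_n-h_k\|=\|f(j_k)+x_0\|$. In fact, for $X=\R$, $I=\N$, $f=0$ one gets $f_n=e_n$, which is not even a $\Delta$-point of $\ell_\infty$: every $y$ in the slice $\{y\in B_{\ell_\infty}\colon y_n>1-\delta\}$ with $0<\delta<1$ satisfies $\|e_n-y\|\leq 1$. So the infiniteness of the blocks is genuinely needed, exactly as you said one sentence earlier.
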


\begin{proof}
Let $x\in B_{\ell_\infty(I,X)}$. Since $I$ is infinite, we can choose a sequence $(A_n)_n$ of pairwise disjoint infinite (countable) subsets of $I$. Fix $x_0\in S_X$ and define a sequence $(y_n)_n\subseteq \ell_\infty(I,X)$ by
\[
y_n(i):=
\begin{cases}
x(i), & i\notin A_n,\\
x_0, & i\in A_n.
\end{cases}
\]
Clearly, $\|y_n\|=1$ for all $n\in\mathbb{N}$.

We claim that $(y_n)_n$ converges weakly to $x$. To see this, define a linear operator
\[
T\colon c_0\to \ell_\infty(I,X), \qquad T(e_n)=y_n-x \quad (n\in\mathbb{N}),
\]
where $(e_n)_n$ denotes the canonical basis of $c_0$. For scalars $\lambda_1,\ldots,\lambda_n$, using that the supports of $y_i-x$ for $1\leq i\leq n$ are pairwise disjoint, we obtain
\[
\Bigl\|T\Bigl(\sum_{i=1}^n \lambda_i e_i\Bigr)\Bigr\|
=\Bigl\|\sum_{i=1}^n \lambda_i (y_i-x)\Bigr\|
=\max_{1\leq i\leq n}\|\lambda_i x_0\|
=\max_{1\leq i\leq n}|\lambda_i|.
\]
Hence $\|T\|=1$, so $T$ is weak-to-weak continuous, and therefore $(y_n)_n$ converges weakly to $x$.

It remains to show that each $y_n$ is a super $\Delta$-point. Fix $n\in\mathbb{N}$ and write $A_n=\{\alpha_m\colon m\in\mathbb{N}\}$. Define a sequence $(z_m)_m\subseteq \ell_\infty(I,X)$ by
\[
z_m(i):=
\begin{cases}
y_n(i), & i\neq \alpha_m,\\
-\,x_0, & i=\alpha_m.
\end{cases}
\]
Then $z_m-y_n=(-2x_0)\,e_{\alpha_m}$ for all $m\in\mathbb{N}$. Consequently, $(z_m)_m$ converges weakly to $y_n$ (with the same argument than the one above) and $\|z_m-y_n\|=2$ for all $m$. By \cite[Proposition~3.4]{MPR}, this shows that $y_n$ is a super $\Delta$-point. Since $n$ is arbitrary, the proof is complete.
\end{proof}

We are now able to present the main result on vector-valued $L_\infty$ spaces.

\begin{theorem}
Let $(S,\mathcal{A},\mu)$ be a measure space and let $X$ be a Banach space. Then the following statements are equivalent:
\begin{itemize}
    \item[\upshape(i)] The space $L_\infty(\mu,X)$ has the weak-DD2P.
   \item[\upshape(ii)] At least one of the following conditions holds:
$\mu$ is not purely atomic, $\mu$ has infinitely many atoms, or $X$ has the weak-DD2P.
\end{itemize}
\end{theorem}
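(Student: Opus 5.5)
The plan is to decompose $\mu$ into its atomless part and its atomic part, and then apply the absolute-sum results of Subsection~\ref{subsection:absolutesums} together with Proposition~\ref{prop:denselinftyX}. Since $L_\infty$ does not see null sets, and an atom $A$ forces every measurable function to be a.e.\ constant on $A$, we have an isometric identification
\[
L_\infty(\mu,X)\cong L_\infty(\nu,X)\oplus_\infty \ell_\infty(I,X),
\]
where $\nu$ is the restriction of $\mu$ to the non-atomic part (possibly null) and $I$ indexes the atoms. If there are uncountably many disjoint atoms, the atomic part is still an $\ell_\infty$-sum over a maximal disjoint family. Without $\sigma$-finiteness I would restrict to the semifinite part, using an exhaustion argument: $L_\infty$ only sees sets of positive finite-ish measure, and purely infinite atoms contribute copies of $X$ or zero depending on the convention. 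The measure-theoretic bookkeeping here is the main technical nuisance, and I would first state it for $\sigma$-finite $\mu$ and then remark on the general case.

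For (ii)$\Rightarrow$(i) there are three cases. If $I$ is infinite, then $\ell_\infty(I,X)$ has the weak-DD2P by Proposition~\ref{prop:denselinftyX}, and Proposition~\ref{prop:densesuminfty} transfers it to the whole sum. If $\mu$ is not purely atomic, then $\nu\neq 0$. I would show that $L_\infty(\nu,X)$ has the weak-DD2P by mimicking Proposition~\ref{prop:denselinftyX}. Choose pairwise disjoint sets $A_n$ of positive measure. Then split each $A_n$ into infinitely many disjoint pieces $A_{n,m}$ of positive measure, which atomlessness permits. Put $y_n=x\chi_{S\setminus A_n}+x_0\chi_{A_n}$ and $z_m=y_n-2x_0\chi_{A_{n,m}}$. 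As before, the disjointly supported differences span $c_0$ isometrically, so $y_n\rightharpoonup x$ and $z_m\rightharpoonup y_n$ with $\|z_m-y_n\|=2$. Hence each $y_n$ is a super $\Delta$-point by \cite[Proposition~3.4]{MPR}. (Alternatively, $L_\infty(\nu,X)$ has the Daugavet property, hence the DD2P.) Proposition~\ref{prop:densesuminfty} then finishes this case. If $X$ has the weak-DD2P, write $L_\infty(\mu,X)$ as an $\ell_\infty$-sum with one summand equal to $X$, coming from an atom if one exists; otherwise $\nu\neq0$ and we are in the previous case. Proposition~\ref{prop:densesuminfty} then applies.

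For (i)$\Rightarrow$(ii), suppose $\mu$ is purely atomic with finitely many atoms $a_1,\dots,a_k$. Then $L_\infty(\mu,X)\cong X\oplus_\infty\cdots\oplus_\infty X$ ($k$ times), and $k\geq1$ since the space has the weak-DD2P, so it is non-zero. Applying Proposition~\ref{prop:sumasabsolutasbaja}(b) inductively, writing the sum as $X\oplus_\infty(X\oplus_\infty\cdots)$, shows that some summand has the weak-DD2P, that is, $X$ does. I expect no real obstacle here beyond the decomposition of $\mu$. The only delicate point is justifying the $c_0$-estimate in the atomless construction, which is identical to the one in Proposition~\ref{prop:denselinftyX}.
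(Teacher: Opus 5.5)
Your proof is correct and follows the paper's route: the decomposition $L_\infty(\mu,X)\cong L_\infty(\nu,X)\oplus_\infty\ell_\infty(I,X)$ combined with Propositions~\ref{prop:densesuminfty}, \ref{prop:sumasabsolutasbaja} and~\ref{prop:denselinftyX}, with (i)$\Rightarrow$(ii) argued contrapositively rather than by the paper's case split. The only real deviation is that for $\nu\neq0$ you build the super $\Delta$-points directly, whereas the paper simply quotes the Daugavet property of $L_\infty(\nu,X)$ from \cite{lrt}; in that construction the vectors $(x_0-x)\chi_{A_n}$ are only $2$-dominated by the $c_0$ basis, not isometrically equivalent to it, but that upper bound is all you need.
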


\begin{proof}
(ii)$\Rightarrow$(i). It is a folklore result, as a consequence of the decomposition of a measure into its atomic and non-atomic parts (see \cite[Theorem~2.1]{Johnson}), that
\[
L_\infty(\mu,X)\cong L_\infty(\nu,X)\oplus_\infty \ell_\infty(I,X),
\]
where $\nu$ denotes the atomless part of $\mu$ and $I$ indexes the atoms of $\mu$.

If $\nu\neq 0$ (that is, $\mu$ is not purely atomic), then $L_\infty(\nu,X)$ has the Daugavet property (see \cite[Corollary~4.13]{lrt}), and hence $L_\infty(\mu,X)$ has the weak-DD2P by Proposition~\ref{prop:densesuminfty}. If the set $I$ is infinite, then $\ell_\infty(I,X)$ has the weak-DD2P for every Banach space $X$ by Proposition~\ref{prop:denselinftyX} and the result follows again from Proposition~\ref{prop:densesuminfty}. Finally, if $X$ has the weak-DD2P, the only remaining case is that $I$ is not empty and finite and hence $\ell_\infty(I,X)$ has the weak-DD2P by Propositions~\ref{prop:densesuminfty}. Therefore, the same proposition gives that the whole space $L_\infty(\mu,X)$ does.

(i)$\Rightarrow$(ii). Suppose that $L_\infty(\mu,X)$ has the weak-DD2P. By the above decomposition and Proposition~\ref{prop:sumasabsolutasbaja}, at least one of the summands $L_\infty(\nu,X)$ or $\ell_\infty(I,X)$ must have the weak-DD2P. If $\nu\neq 0$, then $\mu$ is not purely atomic. If $I$ is infinite, then $\mu$ has infinitely many atoms. Finally, if $I$ is non-empty and finite, then $\ell_\infty(I,X)$ has the weak-DD2P if and only if $X$ has the weak-DD2P by Propositions \ref{prop:densesuminfty} and \ref{prop:sumasabsolutasbaja}. 
\end{proof}

We conclude this section with a description of the weak-DD2P for spaces of the form $C(K,X)$.

\begin{theorem}
Let $K$ be a compact Hausdorff topological space and let $X$ be a Banach space.
\begin{enumerate}
    \item[\upshape(a)] If $K'=\emptyset$ (equivalently, if $K$ is finite), then $C(K,X)$ has the weak-DD2P if and only if $X$ has the weak-DD2P.
    
    \item[\upshape(b)] If $K'$ is non-empty and finite, then $C(K,X)$ has the weak-DD2P if and only if $X$ is infinite-dimensional.
    
    \item[\upshape(c)] If $K'$ is infinite (equivalently, $K''\neq\emptyset$), then $C(K,X)$ always has the weak-DD2P.
\end{enumerate}
\end{theorem}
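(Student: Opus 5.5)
We treat the three cases separately. Parts (a) and (c) follow directly from earlier results; the real work is in part (b).

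\emph{Part (a).} If $K$ is finite, say with $n$ points, then $C(K,X)$ is isometric to the $\ell_\infty$-sum $X\oplus_\infty\cdots\oplus_\infty X$ with $n$ summands. By induction on $n$, Propositions~\ref{prop:densesuminfty} and~\ref{prop:sumasabsolutasbaja}(b) show that this sum has the weak-DD2P if and only if one of the summands has it. Since every summand is $X$, this means if and only if $X$ has the weak-DD2P.

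\emph{Part (c).} If $K'$ is infinite, Proposition~\ref{prop:C0(L,X)} applied with $L=K$ gives the result immediately.

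\emph{Part (b), sufficiency.} Let $K'=\{t_1,\ldots,t_m\}\neq\emptyset$ and suppose $X$ is infinite-dimensional. Fix $f\in B_{C(K,X)}$ and a weakly open set $W\ni f$ in $B_{C(K,X)}$. We will produce a function in $W$ which attains its norm at $t_1$; such a function is a super $\Delta$-point by \cite[Theorem~4.1]{lrt}.
\begin{enumerate}
\item For an open neighbourhood $U$ of $t_1$, take a Urysohn function $\varphi_U\colon K\to[0,1]$ with $\varphi_U(t_1)=1$ and $\varphi_U=0$ off $U$. Set $g_U:=(1-\varphi_U)f+\varphi_U f(t_1)$. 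Then $\|g_U-f\|\le\sup_{t\in U}\|f(t)-f(t_1)\|$, so by continuity of $f$ we can choose $U$ with $g_U$ so norm-close to $f$ that $g_U\in W$.
\item Since $X$ is infinite-dimensional, $S_X$ is weakly dense in $B_X$. Pick a net $(x_\alpha)\subseteq S_X$ converging weakly to $f(t_1)$.
\item Define $f_\alpha:=(1-\varphi_U)f+\varphi_U x_\alpha$. Then $\|f_\alpha\|\le1$, $f_\alpha(t_1)=x_\alpha\in S_X$, and $f_\alpha=g_U+\varphi_U(x_\alpha-f(t_1))$.
\item The map $x\mapsto\varphi_U x$ is a bounded operator $X\to C(K,X)$, hence weak-to-weak continuous. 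Therefore $f_\alpha\to g_U$ weakly, and some $f_\alpha$ lies in $W$.
\end{enumerate}
That $f_\alpha$ has norm one, attained at $t_1\in K'$, so it is a super $\Delta$-point in $W$.

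\emph{Part (b), necessity.} Now suppose $X$ is finite-dimensional; the case $X=\{0\}$ is trivial. Consider
\[
W=\{g\in B_{C(K,X)}\colon \|g(t_i)\|<\tfrac12,\ i=1,\ldots,m\}.
\]
This set is weakly open, because each condition $\|g(t_i)\|<\frac12$ involves only finitely many coordinate functionals on the finite-dimensional space $X$, composed with evaluation at $t_i$. We show that no $f\in W$ is a super $\Delta$-point.
\begin{enumerate}
\item By continuity, $V=\{t\colon \|f(t)\|<\frac12\}$ is an open set containing $K'$. Its complement $F$ is compact and has no cluster points of $K$, so $F$ is a finite set of isolated points.
\item For $\delta>0$, the set $W_\delta=\{g\in B_{C(K,X)}\colon \|g(s)-f(s)\|<\delta,\ s\in F\}$ is a weakly open neighbourhood of $f$, again because $X$ is finite-dimensional and $F$ is finite.
\item Every $g\in W_\delta$ satisfies $\|g(t)-f(t)\|\le\frac32$ for $t\in V$ and $\|g(s)-f(s)\|<\delta$ for $s\in F$. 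Hence $\|g-f\|\le\max\{\frac32,\delta\}<2$ whenever $\delta<2$.
\end{enumerate}
So $f$ is not a super $\Delta$-point, the super $\Delta$-points are not weakly dense, and $C(K,X)$ fails the weak-DD2P.

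The main obstacle is the sufficiency direction of (b). There, weak convergence to $f$ has to be reconciled with the requirement that the approximants attain their norm exactly at $t_1$. This is why the construction is done in two stages: first fix the neighbourhood $U$ using norm control, and only then run the net $(x_\alpha)$ through the fixed operator $x\mapsto\varphi_U x$.
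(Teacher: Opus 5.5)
Your proof is correct. Parts (a) and (c) follow the paper, but in (b) you take a different route in both directions.

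\textbf{Sufficiency in (b).} The paper fixes $t_0\in K'$ and uses finiteness of $K'$ to obtain neighbourhoods $U_\beta$ of $t_0$ that are clopen. This makes the piecewise function equal to $x_\alpha$ on $U_\beta$ and $f$ elsewhere continuous. The paper then checks weak convergence of the double net $(f_{\alpha,\beta})$ to $f$ by hand, using the identification $C(K,X)^*=\ell_1(K,X^*)$ for the scattered compact $K$. You instead do a norm approximation first, $g_U$, and then push the net $(x_\alpha)$ through the bounded operator $x\mapsto\varphi_U x$. This avoids both the clopen structure and the dual representation. It also never uses that $K'$ is finite, so your argument shows that $C(K,X)$ has the weak-DD2P whenever $K'\neq\emptyset$ and $\dim X=\infty$, in the spirit of Proposition~\ref{prop:C0(L,X)}.

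\textbf{Necessity in (b).} The paper appeals to the description of super $\Delta$-points in $C(K,X)$ from \cite[Theorem~4.8]{lrt}. You replace this with a self-contained argument. The set where $\|f\|\geq\frac12$ is a compact set of isolated points, hence finite. A weak neighbourhood controlling $f$ on that finite set then forces $\|g-f\|\leq\frac32$, which rules out $f$ being a super $\Delta$-point.

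The justification that $W$ is weakly open is more accurately phrased as follows. Evaluation at $t_i$ is a bounded linear map, hence weak-to-weak continuous, and open balls of a finite-dimensional $X$ are weakly open. The ball itself is not literally cut out by finitely many functionals.
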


\begin{proof}
(a). If $K$ is finite, then
\[
C(K,X)=\underbrace{X\oplus_\infty\cdots\oplus_\infty X}_{n\ \text{times}},
\]
where $n$ is the cardinal of $K$. Hence, the conclusion follows from Propositions~\ref{prop:densesuminfty} and~\ref{prop:sumasabsolutasbaja}.

(b). Assume first that $X$ is finite-dimensional, so that the norm of $X$ is weakly continuous. As $K'$ is finite, we may consider the non-empty weakly open subset
\[
W:=\bigcap_{t\in K'}\Bigl\{f\in C(K,X)\colon \|f(t)\|<\tfrac12\Bigr\}.
\]
By \cite[Theorem~4.8]{lrt}, the set $W$ contains no super $\Delta$-points, since $B_X$ has no super $\Delta$-points and, by construction, functions in $W$ cannot attain their norm at any point of $K'$. Consequently, $C(K,X)$ fails the weak-DD2P.

Conversely, assume that $X$ is infinite-dimensional and fix $t_0\in K'$. Since $K$ is Hausdorff, there exists an open set $U_0\subseteq K$ such that $U_0\cap K'=\{t_0\}$. Choose an open neighbourhood $U$ of $t_0$ satisfying
\[
t_0\in U\subseteq \overline{U}\subseteq U_0.
\]
Let $(U_\beta)_\beta$ denote the net of all open neighbourhoods of $t_0$ contained in $U$, ordered by reverse inclusion.

Fix $f\in B_{C(K,X)}$ and set $a:=f(t_0)$. Since $\dim X=\infty$, there exists a net $(x_\alpha)_\alpha\subseteq S_X$ converging weakly to $a$. Define a net $(f_{\alpha,\beta})_{(\alpha,\beta)}\subseteq C(K,X)$ by
\[
f_{\alpha,\beta}(t)=
\begin{cases}
f(t), & t\notin U_\beta,\\
x_\alpha, & t\in U_\beta.
\end{cases}
\]
Each function $f_{\alpha,\beta}$ is continuous, since points of $U_\beta$ may accumulate only at $t_0$. Moreover,
\[
\|f_{\alpha,\beta}(t_0)\|=1=\|f\|,
\]
and therefore $f_{\alpha,\beta}$ is a super $\Delta$-point for every $(\alpha,\beta)$ by \cite[Theorem~4.8]{lrt}.

It remains to show that $(f_{\alpha,\beta})_{(\alpha,\beta)}$ converges weakly to $f$. Let $\varepsilon>0$ and $g\in C(K,X)^*$. Since
\[
C(K,X)^*=\ell_1(K)\pten X^*=\ell_1(K,X^*)
\]
(see \cite[Theorem 14.24]{RojoYAmarillo}), let $(t_n)_n\subseteq K$ be the (countable) support of $g$.
Choose $n_0\in\mathbb{N}$ such that
\[
\sum_{n\geq n_0}\|g(t_n)\|<\tfrac{\varepsilon}{4}.
\]
Then there exists $\beta_0$ such that $U_{\beta_0}$ separates $t_0$ from $\{t_1,\ldots,t_{n_0-1}\}\setminus\{t_0\}$. Hence, for all $\beta\geq\beta_0$,
\[
|\langle g,f_{\alpha,\beta}-f\rangle|
\leq |\langle g(t_0),x_\alpha-f(t_0)\rangle|
     +\sum_{n\geq n_0}\|g(t_n)\|(\|f_{\alpha,\beta}\|+\|f\|).
\]
Choosing $\alpha_0$ large enough, we obtain
\[
|\langle g,f_{\alpha,\beta}-f\rangle|<\varepsilon,
\]
which proves the weak convergence.

(c). It is Proposition \ref{prop:C0(L,X)} observing that $C(K,X)=C_0(K,X)$.
\end{proof}

We conclude the paper by showing that, in a certain sense, the weak-DD2P behaves differently from other diameter two properties.

\begin{remark}\label{rem:C(K,X)vecval}
Observe that, by virtue of assertion~(b) in the above theorem, one can construct examples of spaces $C(K,X)$ with the weak-DD2P for which both $C(K)$ and $X$ fail the weak-DD2P. For instance, the space $C([0,\omega_0],\ell_2)$ has the weak-DD2P, even though $C([0,\omega_0])$ fails the weak-DD2P since $[0,\omega_0]$ has a unique cluster point, by Proposition~\ref{prop:denseC(K)}, and $\ell_2$ fails every diameter two property due to its reflexivity. 

To the best of our knowledge, the weak-DD2P is the first known property within the diameter two framework that may hold in a space of the form $C(K,X)$ even when both $C(K)$ and $X$ fail to enjoy this property.
\end{remark}

\section*{Acknowledgements}  

This work was initiated during the first-named author’s visit to Universidad de Granada in March 2026 supported by ``Maria de Maeztu'' Excellence Unit IMAG (CEX2020-001105-M), funded by MICIU/AEI/10.13039/501100011033  and by Junta de Andaluc\'ia. He gratefully acknowledges the support received during his stay and warmly thanks all those who contributed to making his visit a very pleasant and rewarding experience.

The research of J.\ Guerrero-Viu was supported by FPU24/02284 predoctoral grant funded by MCIU and by grant PID2022-137294NB-I00 funded by MCIN/AEI/10.13039/501100011033 and by “ERDF A way of making Europe”. The research of M.\ Mart\'in and A.\ Rueda Zoca has been supported by MICIU/AEI/10.13039/501100011033 and ERDF/EU through the grant PID2021-122126NB-C31, by ``Maria de Maeztu'' Excellence Unit IMAG (CEX2020-001105-M) funded by MICIU/AEI/10.13039/501100011033 and by Junta de Andaluc\'ia, and by Junta de Andalucía grant FQM-185.

The authors acknowledge the use of artificial intelligence–based tools, including Microsoft Copilot 365, ChatGPT (OpenAI), and Gemini (Google), for language editing and stylistic improvements of the manuscript. The authors remain fully responsible for the content of this work.

\end{document}